\theoremstyle{definition}
\newtheorem{definition}{Definition}[section]
\newtheorem{theorem}{Theorem}[section]
\theoremstyle{bfremark}
\newtheorem{remark}{Remark}[section]
\newtheorem{proposition}{Proposition}[section]
\newtheorem{lemma}{Lemma}[section]
\numberwithin{equation}{section}
\newcommand{\veph}{\hat\varepsilon }
\newcommand{\vepe}{\varepsilon^{\textup{ex}}}
\newcommand{\mchi}{\raisebox{0pt}[1ex][1ex]{$\chi$}}
\begin{document}
\title{Nonradial Quenching Profile for a MEMS Model} 
\author{Hsuan-Lin Liao and Van Tien Nguyen}
\date{\today}
\address{Hsuan-Lin Liao, Department of Mathematics, National Taiwan University.}
\email{d14221002@ntu.edu.tw}

\address{Van Tien Nguyen, Department of Mathematics, Institute of Applied Mathematical Sciences, National Taiwan University.}
\email{vtnguyen@ntu.edu.tw}
\maketitle

\let\thefootnote\relax
\footnotetext{MSC2020: 35B44, 35K55, 35B40.}

\begin{abstract}
We construct a quenching solution to the parabolic MEMS model
\[
u_t = \Delta u - \frac{1}{u^2} \quad \text{in } \mathcal{B} \times (0,T), \quad u|_{\partial \mathcal{B}} = 1,
\]
where \(\mathcal{B}\) is the unit disc in \(\mathbb{R}^2\), and \(T > 0\) denotes the quenching time.  The constructed solution quenches only at the origin and admits the final profile
\[
u(x,T) \sim \left(x_1^2 x_2^2 + \theta(x_1^6 + x_2^6)\right)^{\frac{1}{3}} \quad \text{as } |x| \to 0,
\]
where $\theta \in (0, \theta^*)$ for some $\theta^* > 0$. To our knowledge, this is the first example of a quenching solution with a genuinely non-radial profile. The proof relies on the construction of a good approximate solution, using a perturbative expansion in self-similar variables. We then justify the true solution that remains close to this approximation through a spectral analysis combined with a robust energy method.

\medskip
\noindent\textbf{Keywords:} MEMS equation; quenching; non-radial profile. 
\end{abstract}

\section{Introduction}
\label{section1}
\subsection{The MEMS Model}
Micro-electro-mechanical systems (MEMS) are miniature devices that integrate mechanical and electrical components on a micrometer scale. Typical applications include devices such as pressure sensors, accelerometers, microactuators, RF switches, and optical mirrors. MEMS devices often consist of a rigid ground plate and an elastic membrane suspended above it at an initial distance \( d > 0 \). The distance between them over time, referred to as the \emph{deflection}, can be modeled by partial differential equations under suitable assumptions. When the membrane is thin and the initial distance is small compared to the overall size of the device, one arrives at the following MEMS model (see \cite{kavallaris2018non} for a derivation):
\begin{align}
\begin{cases}
    \beta u_{tt} + u_t - \Delta u + \delta \Delta^2 u = \dfrac{-\lambda}{u^2 \left(1 + \alpha \int_{\Omega} \frac{1}{u} \, dx \right)^2} \quad \text{in} \quad \Omega \times (0,T), \\
    u = 1 \quad \text{on} \quad \partial\Omega \times (0,T), \\
    0<u_0(x) \leq 1,\label{omems}
\end{cases}
\end{align}
Here, \( u(x,t) \) denotes the deflection of the membrane. The parameter \(\beta>0\) measures the thickness of the membrane, \( \delta > 0 \) quantifies the relative importance of tension and rigidity, \( \lambda > 0 \) is proportional to the applied voltage, and \( \Omega \subset \mathbb{R}^n \) is a bounded domain. The boundary condition \( u = 1 \text{ on } \partial \Omega \times (0, T)\) reflects a fixed edge for the membrane. For more physical background and possible applications, we refer the reader to \cite{guo2005touchdown}, \cite{esposito2010mathematical}, \cite{flores2007analysis}, \cite{guo2014recent}, and \cite{pelesko2002modeling}.   \\

The model \ref{omems} is rich from a mathematical perspective. Taking limits of the parameters, \(\beta, \delta\) completely changes the type of the equation: from a second-order parabolic (\(\beta, \delta \ll1\)) to a
fourth-order parabolic (\(\beta \ll1, \delta \gg 1\)), or from parabolic (\(\beta \ll 1\)) to hyperbolic \((\beta \gg 1)\). In this work, we consider the case where \( \Omega = \mathcal{B} \), the open unit disc in \( \mathbb{R}^2 \), and set the parameters, \(\lambda = 1\), \(\alpha = 0\), while letting \(\beta \to 0\) and \(\delta \to 0\), for which the original model reads as
\begin{align}
\begin{cases}
    u_t = \Delta u - \dfrac{1}{u^2} & \text{in } \mathcal{B} \times (0, T), \\
    u = 1 & \text{on } \partial\mathcal{B} \times (0, T), \\
    0 < u_0(x) \leq 1. & \label{mems}
\end{cases}
\end{align}

According to \cite{kavallaris2018non}, a function \(u\) is said to be a classical solution of (\ref{mems}) if \( u \in C^{2,1}(\Omega \times (0,T)) \) and \(0<u \leq 1\). Moreover, the local Cauchy problem for (1.2) is solved, and the solution either exists globally or quenches in finite time \( T > 0 \), meaning
\[
\liminf_{t \to T} \left( \min_{x \in \overline{\mathcal{B}}} u(x,t) \right) = 0.
\]
Quenching corresponds physically to the moment when the elastic membrane contacts the ground plate, often referred to as \emph{touchdown} phenomena. Once this occurs, the MEMS device typically fails or becomes non-functional.

Quenching solutions of the MEMS model have received significant attention over the past few decades, particularly in the parabolic case, obtained by taking \(\beta \to 0\) and \(\delta \to 0\) in \eqref{omems}. For the local problem (\(\alpha=0\)), Deng and Levine \cite{deng1989blow} proved that the set of quenching points is contained within a compact subset of \(\Omega\), provided that \(\Omega\) is convex. Moreover, in one spatial dimension, the quenching set is finite under suitable assumptions; see \cite{GUO1992507} for further details. For the one-dimensional nonlocal problem (\(\alpha=1\)), Guo, Hu, and Wang \cite{guo2009nonlocal} showed that for specific initial data and sufficiently large \(\lambda\), the solution quenches only at the origin and forms a cusp at the quenching point without a precise quenching rate. In higher dimensions, Guo and Kavallaris \cite{guo2012nonlocal} demonstrated that quenching occurs provided the initial energy is sufficiently small. Explicit profiles for quenching solutions have been relatively scarce in the literature, apart from the works of Merle and Zaag \cite{merle1997reconnection}, Filippas and Guo \cite{filippas1993quenching}, Duong and Zaag \cite{duong2018construction}. In this work, we aim to construct a truly non-radial quenching solution to \eqref{mems} with a precise description of its behavior near the singularity.

\subsection{Main Theorem}
The following is a brief version of the main theorem. 

\begin{theorem}[Rough statement of the main theorem] There is smooth initial data $0 < u_0 \leq 1$ such that the corresponding solution $u(x,t)$ to \eqref{mems} quenches in finite time $T > 0$ only at the origin and admits the quenching rate 
\begin{equation}
   u(0,t ) = \min_{x \in \bar{\mathcal{B}}} u(x,t) \sim (T-t)^{\frac{1}{3}}, \quad \textup{as}\;\; t \to T,
\end{equation}
and the nonradial final profile 
\begin{equation}
    u(x,T) \sim \left(x_1^2 x_2^2 + \theta(x_1^6 + x_2^6)\right)^{\frac{1}{3}},\quad \textup{as} \;\; |x| \to 0,
\end{equation}
where $\theta \in (0, \theta^*)$ for some $\theta^* > 0$, chosen such that the estimates in Lemma \ref{lem:initial data bound} hold.
\end{theorem}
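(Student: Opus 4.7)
The plan is to analyze the equation in the standard self-similar quenching variables
\begin{equation*}
u(x,t)=(T-t)^{1/3}\,v(y,s),\qquad y=\frac{x}{\sqrt{T-t}},\qquad s=-\log(T-t),
\end{equation*}
which turn \eqref{mems} into
\begin{equation*}
v_{s}=\Delta v-\tfrac{1}{2}\,y\cdot\nabla v+\tfrac{1}{3}\bigl(v-v^{-2}\bigr).
\end{equation*}
The constant $v_{0}=3^{1/3}$ is stationary, and linearising around it produces the Ornstein--Uhlenbeck-type operator $\mathcal{L}=\Delta-\tfrac{1}{2}\,y\cdot\nabla+1$, whose eigenfunctions are tensor products of Hermite polynomials $H_{\alpha}(y)=H_{\alpha_{1}}(y_{1})H_{\alpha_{2}}(y_{2})$ with eigenvalues $1-|\alpha|/2$. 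In the sector invariant under $x_{i}\mapsto-x_{i}$ and $x_{1}\leftrightarrow x_{2}$ (the symmetry group of the target profile), the relevant spectrum reduces to one unstable mode at $|\alpha|=0$ (eigenvalue $1$), one null mode at $|\alpha|=2$ spanned by $H_{2}(y_{1})+H_{2}(y_{2})$, and strictly stable modes for $|\alpha|\geq 4$.

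\textbf{Approximate solution.} The matching condition $\lim_{s\to\infty}e^{-s}v^{3}(e^{s/2}x,s)=x_{1}^{2}x_{2}^{2}+\theta(x_{1}^{6}+x_{2}^{6})$ forces the ansatz
\begin{equation*}
v^{3}(y,s)=3+e^{-s}\,H_{2}(y_{1})H_{2}(y_{2})+\theta\,e^{-2s}\bigl(H_{6}(y_{1})+H_{6}(y_{2})\bigr)+\text{higher-order corrections}.
\end{equation*}
The two leading terms lie in the eigenspaces of $\mathcal{L}$ with eigenvalues $-1$ and $-2$, so that their pre-factors $e^{-s}$ and $\theta e^{-2s}$ are consistent with the linear dynamics; their top monomials $y_{1}^{2}y_{2}^{2}$ and $y_{1}^{6}+y_{2}^{6}$ produce, after multiplication by $T-t$ and passage to the limit $s\to\infty$ at fixed $x$, exactly the desired $x_{1}^{2}x_{2}^{2}$ and $\theta(x_{1}^{6}+x_{2}^{6})$, while the subleading monomials of the Hermite polynomials contribute only vanishing terms. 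We extend the ansatz by polynomial-in-$y$ corrections chosen so that the residual produced upon substitution into the PDE decays strictly faster than the target asymptotics, yielding a good approximate solution $V^{*}(y,s)$.

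\textbf{Perturbative argument.} Writing $v=V^{*}+q$ and decomposing $q$ along the spectral basis of $\mathcal{L}$, we handle each component according to its eigenvalue. The finitely many unstable and null components (one of each in the symmetric sector) are controlled by a topological/Brouwer-degree shooting argument in the spirit of Bricmont--Kupiainen and Merle--Zaag: we tune the matching parameters in the initial data so that these projections are driven to zero as $s\to\infty$. The stable components ($|\alpha|\geq 4$) are bounded by a weighted $L^{2}$ energy estimate in the Gaussian measure $e^{-|y|^{2}/4}\,dy$, using the spectral gap of $\mathcal{L}$. Outside the self-similar region we glue to an outer expansion obtained by parabolic comparison, which enforces $0<u\leq 1$, the boundary condition on $\partial\mathcal{B}$, and the fact that quenching occurs only at the origin.

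The main obstacle will be the anisotropic, multi-scale structure of the profile. Since $x_{1}^{2}x_{2}^{2}$ vanishes on the coordinate axes while the degree-$6$ term does not, the solution effectively quenches at two different spatial scales, $|x|\sim(T-t)^{1/4}$ in the bulk and $|x|\sim(T-t)^{1/6}$ along the axes, so purely spherically weighted norms tuned to a radial profile will not suffice; one must design anisotropic function spaces respecting the factorised structure of $H_{2}(y_{1})H_{2}(y_{2})$ and of $H_{6}(y_{1})+H_{6}(y_{2})$, and show that the topological selection of parameters remains compatible with them. The range $\theta\in(0,\theta^{*})$ is expected to come from the twin requirement that $\theta>0$ keeps $x_{1}^{2}x_{2}^{2}+\theta(x_{1}^{6}+x_{2}^{6})$ strictly positive away from the origin (preventing secondary quenching points), while $\theta<\theta^{*}$ keeps the degree-$6$ contribution a genuine perturbation of the degree-$4$ leading term, so that the nonlinear energy estimates close.
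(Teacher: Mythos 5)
Your plan mirrors the paper's broad architecture (self-similar variables, linearization around $3^{1/3}$, Hermite spectral decomposition, topological shooting plus energy estimates, comparison in the outer region), but there are several genuine gaps. A minor one first: the self-similar equation is $v_s = \Delta v - \tfrac{1}{2}y\cdot\nabla v + \tfrac{1}{3}v - v^{-2}$, not $v_s=\Delta v-\tfrac12 y\cdot\nabla v+\tfrac13(v-v^{-2})$. Only the linear term carries the factor $\tfrac13$ (it comes from differentiating $(T-t)^{1/3}$), and it is precisely this that makes $3^{1/3}$ stationary; with your version the stationary constant would be $1$.

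The major gaps concern the bootstrap. First, your mode count is far too optimistic. The number of modes that must be tuned topologically is set not by stability but by the accuracy of the approximate profile. Because $\mathcal{P}_\theta$ produces an error of size $e^{-\frac{32}{12}s}$, every mode $H_{ij}$ whose intrinsic decay $e^{(1-(i+j)/2)s}$ is slower than $e^{-\frac{32}{12}s}$ — that is, every $(i,j)$ with $i+j\le 7$, thirty-six parameters in all — must be selected in the initial data via a Brouwer-type shooting argument (see \eqref{initialdata} and Proposition \ref{Proposition3.3}), even though most of these modes are linearly stable. Restricting to the fully symmetric sector reduces this count, but you are still left with several symmetric modes of total degree $0,2,4,6$ to tune; it is not one unstable plus one null mode. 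Second, the Gaussian-weighted $L^2_\rho$ energy estimate you propose for the stable part only controls the perturbation on compact sets of $y$. The central technical ingredient of the paper is a family of polynomially weighted energy estimates for $(y\cdot\nabla)^k\varepsilon$, $k=0,1,2$, in the norms $\|\cdot\|_\flat$ and $\|\cdot\|_\natural$ (Lemmas \ref{Lemma5.5}--\ref{Lemma5.7} and Lemma \ref{lem:intEngz2}), designed precisely to cover the intermediate region $K\le|y|\le e^{s/4}$ and to replace the Liouville-type theorem used by Merle--Zaag. Your proposal does not address that region at all, and without it the nonlinear estimate does not close. Finally, your diagnosis that one needs anisotropic function spaces adapted to the factorized structure of $H_2(y_1)H_2(y_2)$ turns out to be a red herring: the paper's weights are isotropic in $|y|$, and the multi-scale structure is handled by splitting into inner, intermediate, and outer regions with different norms in each, not by anisotropic weights.
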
  
To better restate the main theorem in detail, we introduce the self-similar variables,
\begin{align}
     u(x,t) = (T-t)^{\frac{1}{3}}\; w(y,s), \quad y = \frac{x}{\sqrt{T - t}}, \quad s = -\log(T - t),\label{selfsimvari}
\end{align}
which rescales the solution near the quenching time \(T\). In these variables, we define the quenching profile \(\mathcal{P}_{\theta}\) (see section \ref{section2} for a detailed derivation) by
\begin{align}
        \mathcal{P}_{\theta}(y,s) = \Psi_{\theta}(z,s)\mchi_{\frac{1}{4}\kappa(s)}(z) + \mathcal{C}(y,s) \mchi_K(z)+e^{\frac{s}{3}}\left(1-\mchi_{\frac{1}{4}\kappa(s)}(z)\right), \quad z=ye^{-\frac{s}{4}},\label{qp}
\end{align}
where \(\theta \in (0, \theta^*)\) for some $\theta^* > 0$, $K > 0$ a large fixed constant, \(\kappa(s)=e^{\frac{s}{4}}\), \(\mchi_K\) and \(\mchi_{\kappa(s)}\) are defined as in \ref{def:chiR} and $\Psi_{\theta}$ is the main non-radial profile 
\begin{align*}
        \Psi_{\theta}(z,s) = \left(3 + z_1^2 z_2^2 + \theta e^{-\frac{s}{2}}(z_1^6 + z_2^6)\right)^{\frac{1}{3}}.
\end{align*}
Moreover, the correction $\mathcal{C}(y,s)$ takes the form
\begin{align}
\begin{split}
    \mathcal{C}(y,s) =\ & \frac{\hbar}{9} e^{-s} (-2 y_1^2 - 2 y_2^2 + 4) \\
    & + e^{-2s} \Bigg[ 
        \frac{64}{81\hbar} h_0 h_0 
        + \frac{32}{27\hbar} (h_0 h_2 + h_2 h_0) 
        + \frac{64}{27\hbar} h_2 h_2 \\
    & \quad  -\frac{8}{27\hbar}s\, (h_2 h_4 + h_4 h_2) 
        + \frac{8}{27\hbar} (h_0 h_4 + h_4 h_0) \\
    & \quad + \frac{\hbar}{9} \delta (-30 y_1^4 + 180 y_1^2 - 120 - 30 y_2^4 + 180 y_2^2 - 120) \\
    & \quad - \frac{2}{54\hbar} \left( -12 y_1^4 y_2^2 + 12 y_1^4 - 12 y_1^2 y_2^4 + 144 y_1^2 y_2^2 
        - 144 y_1^2 + 12 y_2^4 - 144 y_2^2 + 144 \right)
    \Bigg],\label{corr}
\end{split}
\end{align}
where \(\hbar=3^{\frac{1}{3}}\) and \(h_m(\xi)\) denotes the rescaled Hermite polynomials,
\begin{align}
    h_m(\xi) = \sum_{n = 0}^{\lfloor m/2 \rfloor} \frac{m!}{n!(m - 2n)!} (-1)^n \xi^{m - 2n}.\label{hij}
\end{align}
The following is the main result of this paper. 
\begin{theorem}[Existence of quenching solutions to \eqref{mems} with a detailed asymptotic description]
\label{Theorem 1.1}
There is smooth initial data \(u_0\) with 
\(0<u_0 \leq 1\), such that the corresponding solution of (\ref{mems}) quenches only at the origin in finite time \(T>0\), with the nonradial quenching profile \(\mathcal{P}_{\theta}\). Moreover, the solution \(u\) admits the following asymptotic behavior:\\
(i) \textup{(Inner expansion)}\\
\begin{align}
   w(y,s)-\left(\left(3+e^{-s}y_1^2y_2^2+\theta e^{-2s}(y_1^6+y_2^6)\right)^{\frac{1}{3}}+\mathcal{C}(y,s)\right)=O(e^{-\frac{32}{12}s}),\label{innex}
\end{align}
uniformly in compact sets $\{|y| \leq R\}$ for $R > 0$ arbitrary, where $\theta \in (0, \theta^*)$ for some $\theta^* > 0$.\\
(ii) \textup{(Intermediate expansion)}\\
\begin{align}
    &\sup_{K \leq |y| \leq e^{\frac{s}{4}}}\left|\frac{w(y,s)-\mathcal{P}_{\theta}(y,s)}{|y|^{\frac{\alpha}{2}}}\right|=O(e^{-\frac{32}{12}s}), \quad \text{where } \alpha=18,\label{intex}\\
    &\sup_{e^{\frac{s}{4}} \leq |y| \leq e^{\frac{69}{196}s}}\left|\frac{w(y,s)-\mathcal{P}_{\theta}(y,s)}{|y|^{\frac{\gamma}{2}}}\right|=O(e^{-(\frac{5}{12}+\frac{\gamma}{8})s}), \quad \text{where } \gamma=8.01
\end{align}
and \(K \gg 1\) is a large fixed constant.\\
(iii) \textup{(Outer expansion)}\\
\begin{align}
    \sup_{e^{\frac{69}{196}s} \leq |y| \leq e^{\frac s 2}} \left|\frac{w(y,s)-\mathcal{P}_{\theta}(y,s)}{\mathcal{P}_{\theta}(y,s)}\right|\ll1.\label{outex}
\end{align}\\
(iv) \textup{(Final profile)} There is \(u^* \in C(\mathbb{R}^2 \setminus \{0\})\) such that \(u(x,t) \to u^{*}(x)\) uniformly on compact subsets of \(\mathbb{R}^2 \setminus \{0\}\) as \(t \to T\). Moreover,
\begin{align}
     u^{*}(x)\sim\left(x_1^2x_2^2+ \theta (x_1^6+x_2^6)\right)^{\frac{1}{3}} \quad \text{ as } \quad |x|  \to  0. \label{finpro}
\end{align}
\end{theorem}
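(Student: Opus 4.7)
The plan is to work in the self-similar variables \eqref{selfsimvari} and decompose the rescaled unknown as $w = \mathcal{P}_\theta + q$, where $\mathcal{P}_\theta$ is the carefully chosen approximate profile \eqref{qp}--\eqref{corr} and $q$ is a remainder for which I seek decay estimates in $s$ matching \eqref{innex}--\eqref{outex}. Plugging this ansatz into the equation satisfied by $w$ produces
\begin{equation*}
    \partial_s q = \mathcal{L}_\theta q + N(q) + E_\theta,
\end{equation*}
where $\mathcal{L}_\theta = \Delta - \tfrac{1}{2} y \cdot \nabla - \tfrac{1}{3} + \tfrac{2}{\mathcal{P}_\theta^{\,3}}$ is the linearization, $N(q)$ contains the nonlinear remainder from expanding $-1/w^2$, and $E_\theta$ is the generation error produced by $\mathcal{P}_\theta$ itself. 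The construction of $\mathcal{P}_\theta$, including the secular corrections with $h_0, h_2, h_4$ factors in \eqref{corr}, is designed precisely so that $E_\theta$ is $O(e^{-32s/12})$ on compact sets in $y$ and has suitable polynomial growth in the intermediate zone.

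\textbf{Spectral analysis and a topological shooting argument.} In the inner region $|y| \lesssim 1$ the operator $\mathcal{L}_\theta$ is a compact perturbation of the tensor-product Ornstein--Uhlenbeck operator, whose eigenfunctions are products $h_i(y_1) h_j(y_2)$ of the rescaled Hermite polynomials \eqref{hij} with eigenvalues $\tfrac{2}{3} - \tfrac{i+j}{2}$. I would decompose $q = q_- + q_0 + q_+$ along the stable, neutral, and unstable subspaces; the unstable/neutral directions form a finite-dimensional space (after quotienting out the $x_1 \leftrightarrow x_2$ symmetry forced by the ansatz and the translation invariances fixed by requiring quenching at $0$), and I would parametrize the initial data by a finite number of scalars chosen to kill the non-decaying modes via a Brouwer-type topological argument in the spirit of Merle--Zaag and Bricmont--Kupiainen. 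The stable projection $q_-$ is controlled directly by the semigroup estimates for $\mathcal{L}_\theta$.

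\textbf{Matching across the three regions and energy control.} I would then define a shrinking set $\mathcal{V}(s)$ encoding: (i) polynomial bounds on the Hermite modes of $q$ on $\{|y| \leq K\}$; (ii) the weighted $L^\infty$ bounds $\sup |q|/|y|^{\alpha/2} = O(e^{-32s/12})$ on $\{K \leq |y| \leq e^{s/4}\}$ and $\sup |q|/|y|^{\gamma/2} = O(e^{-(5/12 + \gamma/8)s})$ on $\{e^{s/4} \leq |y| \leq e^{69s/196}\}$; and (iii) the relative bound $|q|/\mathcal{P}_\theta \ll 1$ in the outer zone reaching $|y| \leq e^{s/2}$, i.e.\ back to a neighborhood of $\partial \mathcal{B}$. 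In the outermost zone I would use the self-similar coordinate $z = y e^{-s/4}$ and a barrier / maximum-principle argument, exploiting that $\Psi_\theta(z,s)$ is essentially a supersolution of the rescaled equation for large $|z|$; in the intermediate zone I would derive a differential inequality for the weighted norm via a parabolic Duhamel argument against the weight $|y|^{\alpha/2}$, verifying that both $E_\theta$ and $N(q)$ contribute acceptably. Trapping $q$ inside $\mathcal{V}(s)$ for all $s \geq s_0$ gives \eqref{innex}--\eqref{outex}, and passing to the limit $t \to T$ together with the outer control yields the continuous limit $u^*$ in (iv) and the asymptotics \eqref{finpro}.

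\textbf{Main obstacle.} The hardest step is the analysis in the first intermediate region $\{K \leq |y| \leq e^{s/4}\}$. There, the potential $2/\mathcal{P}_\theta^{\,3}$ is of order $|y|^{-4}$ rather than negligible, the approximate profile genuinely carries the secular $s$-term from $(h_2 h_4 + h_4 h_2)$ in \eqref{corr}, and the nonradial structure of $\Psi_\theta$ prevents reduction to a one-dimensional ODE. Controlling $q$ with the sharp weight $|y|^{\alpha/2}$, $\alpha = 18$, requires carefully choosing the weight so that the linear semigroup is contractive, the projection error from the inner zone is absorbed, and the quadratic terms in $N(q)$ close the estimate; getting the constants to match between the inner Hermite bounds and the intermediate weighted bounds at $|y| \sim K$ is the delicate book-keeping that underpins the whole construction.
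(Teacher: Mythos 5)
Your overall architecture matches the paper's: self-similar rescaling, linearization $w = \mathcal{P}_\theta + \varepsilon$, spectral decomposition along rescaled Hermite polynomials in the inner region, a Brouwer-type topological shooting argument to kill the finite-dimensional non-decaying modes, a barrier/comparison principle in the outermost zone, and a Merle-type argument for the final profile. These are all present and in essentially the same roles.

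The genuine divergence is in the intermediate region $K \leq |y| \leq e^{s/4}$ (and the second intermediate zone in the $z$-variable). You propose a ``parabolic Duhamel argument against the weight $|y|^{\alpha/2}$'', i.e.\ a semigroup representation in a weighted $L^\infty$ space, in the spirit of Merle--Zaag and Bricmont--Kupiainen. The paper deliberately does \emph{not} do this: it states explicitly that instead of a Liouville-type/semigroup control it runs a ``purely energy'' argument, adapted from Nguyen--Nouaili--Zaag, in which one proves the differential inequalities
\begin{equation*}
\frac{1}{2}\frac{d}{ds}\|(y\cdot\nabla)^k\varepsilon\|_\flat^2 \lesssim -\frac{7}{2}\|(y\cdot\nabla)^k\varepsilon\|_\flat^2 + O(e^{-\frac{32}{6}s}), \quad k=0,1,2,
\end{equation*}
and the analogous bounds for $\|(z\cdot\nabla)^k\eta\|_\natural$, and then converts the resulting weighted $L^2$-control of $\varepsilon$ and its scaled derivatives into pointwise $L^\infty$ bounds by Sobolev embedding. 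The energy route is more robust here because the linearized operator has no convenient semigroup representation in the weight $|y|^{\alpha/2}$ over the annular region and the potential $2/\mathcal{P}_\theta^3$ is not small there; the integration-by-parts bookkeeping (including boundary contributions at $|y|=K$ controlled by parabolic regularity) is what actually closes the estimate. Your Duhamel approach is not obviously wrong, but it is a different and likely harder path, and it would not produce the gradient bounds $(y\cdot\nabla)^k\varepsilon$, $k=1,2$, which the paper needs as part of its $H^2$-type Sobolev argument.

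There are also concrete errors in your spectral bookkeeping. With the self-similar equation $w_s = \Delta w - \frac{1}{2}y\cdot\nabla w + \frac{1}{3}w - w^{-2}$, the linearization around the constant $\bar c = 3^{1/3}$ carries the coefficient $\frac{1}{3} + \frac{2}{\bar c^3} = 1$, so the relevant operator on compact sets is $\mathcal{L} = \Delta - \frac{1}{2}y\cdot\nabla + I$ with eigenvalues $\lambda_{ij} = 1 - \frac{i+j}{2}$, not $\frac{2}{3} - \frac{i+j}{2}$ (and your $\mathcal{L}_\theta$ has $-\frac{1}{3}$ where it should be $+\frac{1}{3}$). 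This matters because it changes the counting of modes: in the paper the topological argument must control \emph{all} components $\veph_{ij}$ with $i+j\leq 7$, not merely the ``unstable/neutral'' directions, since the target decay $e^{-32s/12}$ is faster than $e^{\lambda_{ij}s}$ for every $i+j\leq 7$ (as $\lambda_{77}=-5/2 > -8/3$) while $i+j=8$ gives $\lambda=-3<-8/3$ and can be integrated directly. Finally, the paper does not ``quotient out the $x_1\leftrightarrow x_2$ symmetry''; it simply parametrizes the initial data by $\mathbf{c}=(c_{ij})_{i+j\leq 7}$ and runs Brouwer on that finite-dimensional ball, using the strictly outgoing property to conclude.
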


\begin{remark}
We note that the profiles in \eqref{innex} and \eqref{finpro} are cross-shaped. To illustrate this, we display below the contour lines of the leading profile in the blowup variable
\[
\left(3 + z_1^2z_2^2 + \theta e^{-\frac{s}{2}}(z_1^6 + z_2^6)\right)^{\frac{1}{3}}, \quad z=ye^{-\frac{s}{4}},
\]
for \(\theta=\frac{1}{2}\) and \(s = 10\), \(100\), and \(1000\).

\vspace{10pt}

\begin{center}
\begin{minipage}{0.3\linewidth}
\includegraphics[width=\linewidth]{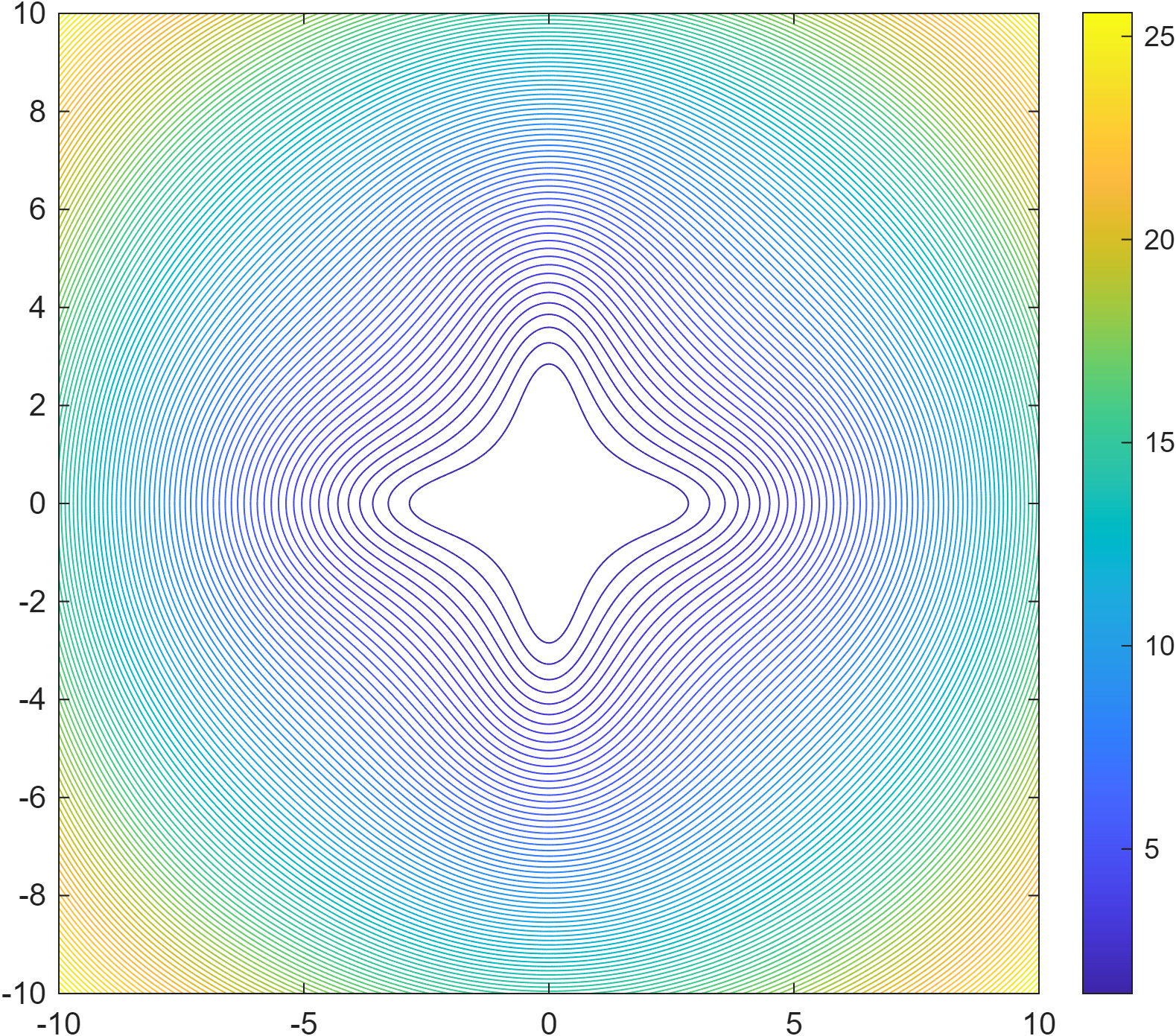}\\
\centering $s = 10$
\end{minipage}
\hfill
\begin{minipage}{0.3\linewidth}
\includegraphics[width=\linewidth]{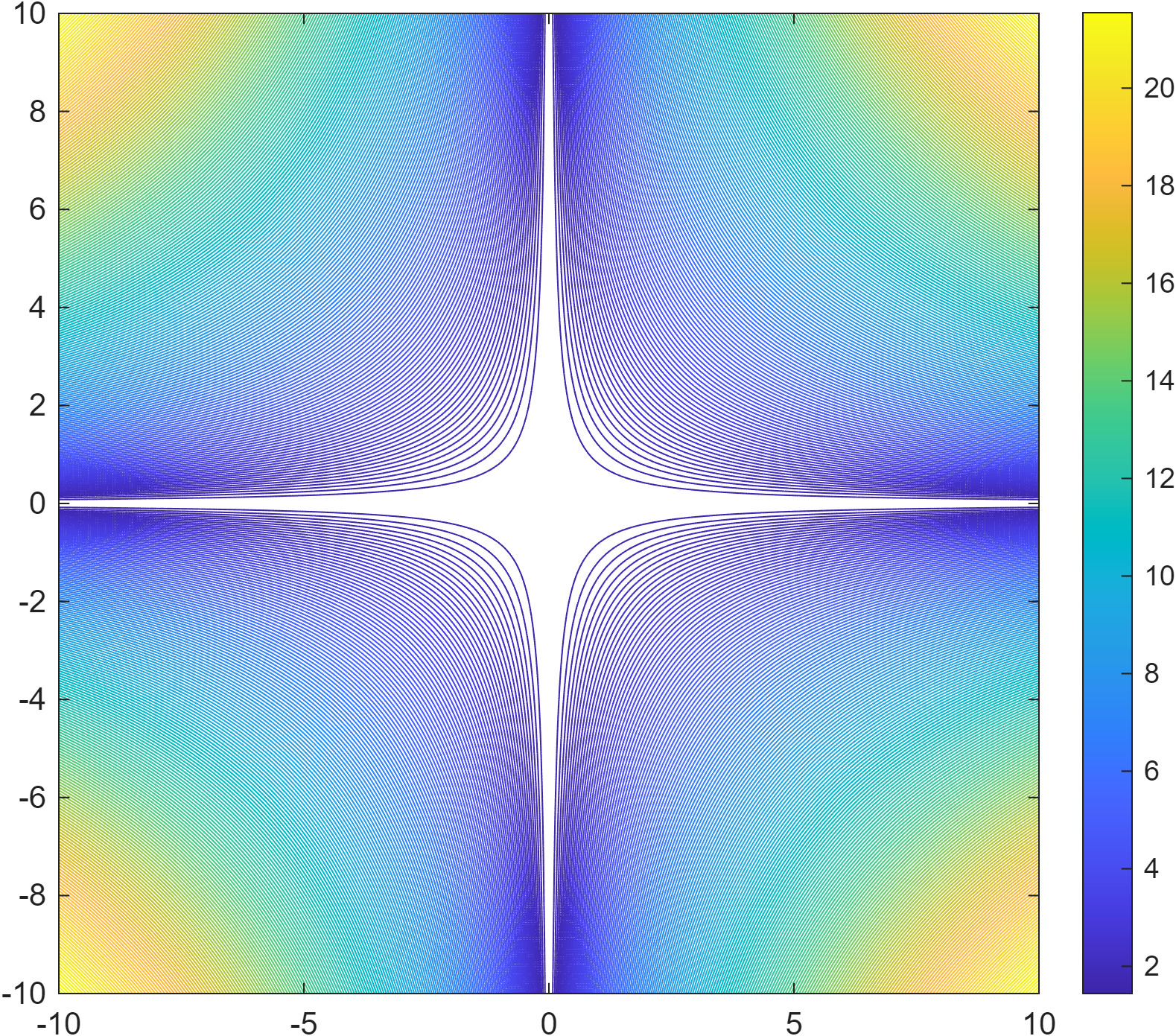}\\
\centering $s = 100$
\end{minipage}
\hfill
\begin{minipage}{0.3\linewidth}
\includegraphics[width=\linewidth]{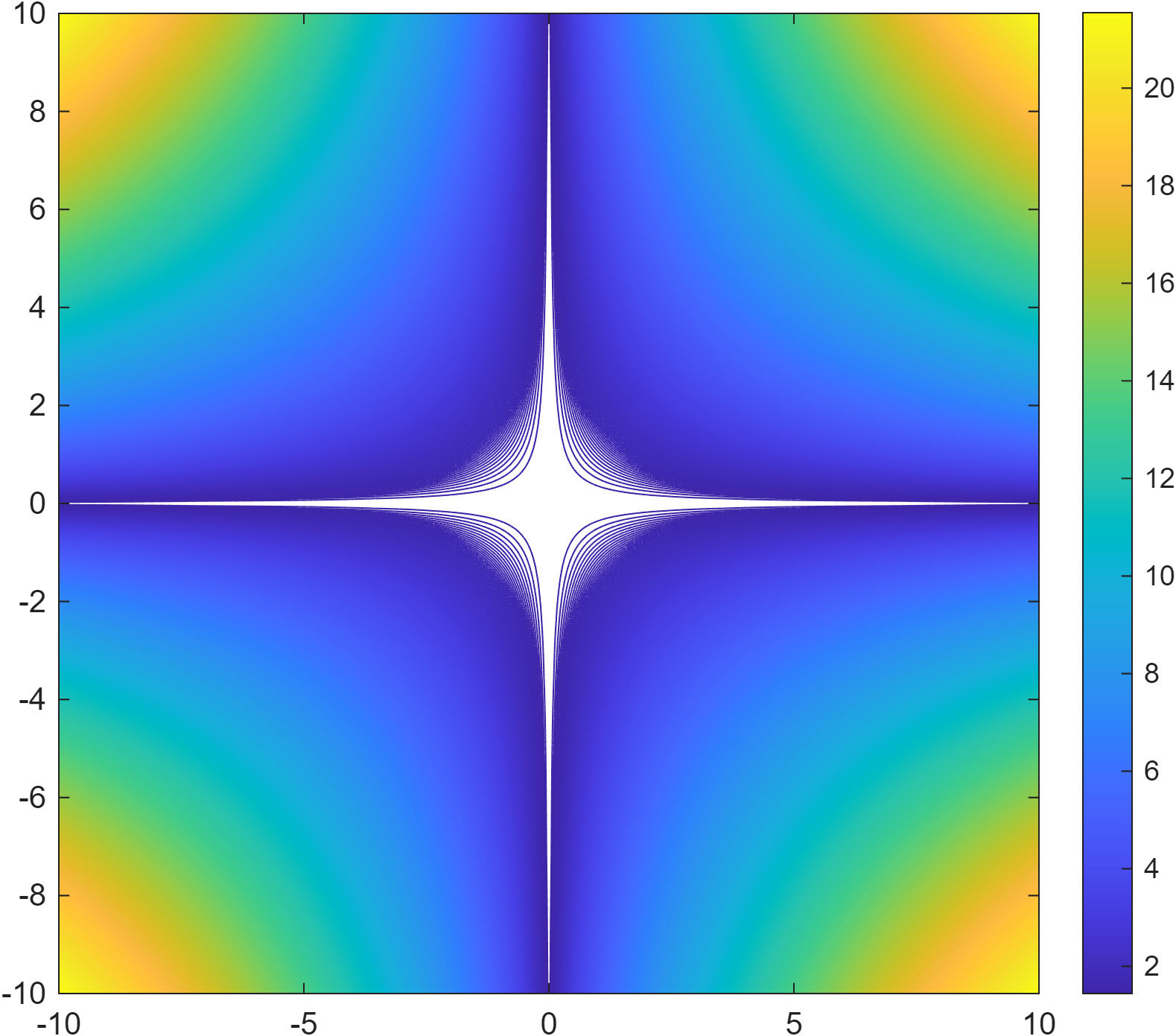}\\
\centering $s = 1000$
\end{minipage}
\end{center}
\vspace{10pt}
We can see that the cross-shaped profile becomes increasingly concentrated near the origin as \(s \to \infty\).
\end{remark}

\begin{remark}
    The correction term \(\mathcal{C}(y,s)\) in the definition of \(\mathcal{P}_{\theta}\) enables the inner expansion of the solution to achieve an error of order \(O(e^{-\frac{32}{12}s})\), thereby revealing the genuinely nonradial nature of the solution.
\end{remark}

\begin{remark}
    To our knowledge, Theorem \ref{Theorem 1.1} provides a first example of a quenching solution that admits a non-radial profile, given that all existing constructed solutions to \ref{mems} only display radial profiles. For example, in the one-dimensional parabolic local model, Filippas and Guo \cite{filippas1993quenching}, Merle and Zaag \cite{merle1997reconnection} demonstrated that for certain initial data, the corresponding solution quenches solely at the origin with the following asymptotic quenching profile:
\begin{align}
    u(x,T) \sim C\left(\frac{|x|}{\sqrt{|\ln{|x|}|}}\right)^{2/3}, \quad \text{as } |x| \to 0, \quad C>0.\label{profil}
\end{align} 
The same final profile holds in higher-dimensional cases with a nonlocal term constructed in \cite{duong2019profile}, where the constant $C$ is precisely computed. 
\end{remark}

\begin{remark}(Comparison with the semi-linear heat equation)
Quenching solutions for the MEMS model \eqref{mems} share several common features with blowup solutions for the semi-linear heat equation:
\begin{align}
    u_t=\Delta u+|u|^{p-1}u, \quad p \geq 1.\label{nlh}
\end{align}
Indeed, once we make the nonlinear transformation $v = u^{-1}$, the equation \eqref{mems} becomes 
\begin{equation} \label{eq:v}
    \partial_t v = \Delta v + v^4  - 2\frac{|\nabla v|^2}{v}.
\end{equation}
This is the nonlinear heat \eqref{nlh} with an extra transport term $\frac{|\nabla v|^2}{v} = \nabla \ln v  \cdot \nabla v$. This is a problematic term that complicates the analysis due to a possible vanishing of $v$. This is one of the reasons we decided to work on the original form \eqref{mems}.

In the literature, most constructed solutions to the semilinear heat equation possess radial profiles, and considerably less is known about solutions with nonradial behavior. Notable examples of nonradial blow-up solutions to \eqref{nlh} include the work of Del Pino, Musso, Wei, and Zheng \cite{del2018sign}, which builds on explicit constructions of sign-changing nonradial solutions to the associated elliptic problem developed in \cite{del2011large, del2013torus}. Further examples arise in the work of Merle and Zaag \cite{merle2024degenerate}, aligned with the classification results of Velázquez \cite{velazquez1993classification} (see also the earlier works \cite{herrero1993blow, filippas1992refined, filippas1993blowup}), as well as in the anisotropic blow-up solutions constructed by Collot, Merle, and Rapha\"el \cite{collot2017strongly}.

Our construction of a non-radial solution for the MEMS model is inspired by the work of Merle and Zaag \cite{merle2024degenerate}, where they constructed explicit examples of blowup solutions to \eqref{nlh} with a truly non-radial profile. In particular, their analysis relied on a Liouville-type theorem to control \(\nabla \varepsilon\), where \(\varepsilon\) is the perturbation of the solution from the blow-up profile. In our proof, instead of using a Liouville-type theorem, we use a purely energy estimate adapted from the work of Nguyen, Nouaili, and Zaag \cite{nguyen2025construction} to archive the control of \((y \cdot \nabla)^k \varepsilon\), \(k = 0, 1, 2\), in suitable weighted \(H^2\) spaces. This, combined with Sobolev inequalities, enables us to obtain the \(L^\infty\) bound of \(\varepsilon\). We note that by controlling gradient terms in suitable weighted spaces, one can obtain a more direct approach to estimating these terms. This leads to a simplification of the analysis compared to the method used in \cite{merle2024degenerate}. More details are given in the next subsection \ref{Strategy of Analysis} and subsection \ref{subsection5.2}.
\end{remark}

\begin{remark}[Potential Type-II quenching solutions] In this paper, the constructed solution is of Type I singularity in the sense that the quenching solution \(u\) to equation (\ref{mems}) satisfies 
\[
    \liminf_{t \to T} \left( \frac{\min_{x \in \overline{\mathcal{B}}} u(x,t)}{(T-t)^{\frac{1}{3}}} \right) \geq C, \quad \text{for some } C > 0.
\]
Otherwise, \(u\) is said to develop a Type II singularity. Recall that the original equation \eqref{mems} is transformed to \eqref{eq:v} with an extra transport term of the form $\nabla \ln v \cdot \nabla v$, which potentially creates more interesting blowup dynamics in the solution $v$, and for which we expect that equation (\ref{mems}) would possibly exhibit Type II singularities.  
\end{remark}

\subsection{Strategy of analysis}
\label{Strategy of Analysis}
We outline the ideas of the proof of Theorem \ref{Theorem 1.1}, divided in several steps:\\
\textit{– Renormalization in self-similar variables:} Under the transform \ref{selfsimvari},
\(w\) satisfies the self-similar equation,
\begin{align}
    &w_s = \Delta w - \frac{1}{2} y \cdot \nabla w + \frac{1}{3}w - \frac{1}{w^2},\label{eq:wys}\\
    &w(y,s)\equiv e^{\frac{1}{3}s}, \quad \text{ on } |y|=e^{\frac{1}{2}s}. \label{eq:wysboundary}
\end{align}
In this setting, the construction of the quenching profile \(\mathcal{P}_{\theta}\) is performed through a spectral analysis. We remark that the profile $\mathcal{P}_{\theta}$ is connected to the trivial constant solution $\bar w = 3^{\frac{1}{3}}$ in the compact set of $y$-variable. A linearization of \eqref{eq:wys} around $\bar w$ yields a linearized problem driven by the linear Hermite operator $\mathcal{L} =\Delta - \frac{1}2y \cdot \nabla + I$, whose spectral properties are well established. A further refinement leads to a precise approximate solution including the correction $\mathcal{C}(y,s)$ introduced in \eqref{corr}, from which we can determine the profile $\mathcal{P}_{\theta}$, see Section \ref{section2} for a detailed derivation. 

\medskip
\noindent
\textit{– Linearization:} Having derived the good approximate profile $\mathcal{P}_{\theta}$ in the self-similar variables, we want to rigorously prove that the true solution to \eqref{eq:wys} remains close to this profile under small perturbations in some appropriate functional setting. We introduce
\begin{equation}
\label{eq:varep lin}
    w(y,s) = \mathcal{P}_{\theta}(y,s) + \varepsilon(y,s),
\end{equation}
which leads to the linearized evolution equation for \(\varepsilon\):
\begin{equation}
    \partial_s \varepsilon = \mathcal{H} \varepsilon + E + NL(\varepsilon),\label{lineq}
\end{equation}
where \(\mathcal{H}\) is the linearized operator given by
\[
    \mathcal{H} \varepsilon = \Delta \varepsilon - \frac{1}{2} y \cdot \nabla \varepsilon + \frac{1}{3}\varepsilon,
\]
\(E\) is the error generated by the profile \(\mathcal{P}_{\theta}\), and \(NL(\varepsilon)\) the non-linear term.\\
\textit{– Decomposition and control of perturbation \(\varepsilon(y,s)\):} Our aim is to show that \(\|\varepsilon(s)\|_{L^{\infty}(\mathbb{R}^2)} \to 0\) as \(s \to \infty\), thus, we introduce the following:
\begin{align*}
    &\veph(y,s)=\varepsilon(y,s) \cdot \mchi_{K}(z), \quad \eta^{ex}(z,s)=\eta(z,s) \cdot(1-\mchi_{K}(ze^{-\frac{5}{49}s})), \quad z=ye^{-\frac{s}{4}},  
\end{align*}
where \(\eta(z,s)=\varepsilon(y,s)\) and use them to control the perturbation \(\varepsilon\).
\begin{itemize}
    \item Control of \(\varepsilon\) in compact sets:
We further decompose \(\veph\) as
\begin{align*}
    \veph(y,s)=\sum_{i+j \leq 8}\veph_{ij}(s)H_{ij}(y)+\veph_-(y,s),
\end{align*}
where $H_{ij}$ is the eigenfunction of the self-adjoint operator $\mathcal{L}: H^2_\rho(\mathbb{R}^2) \subset L^2_{\rho}(\mathbb{R}^2) \to L^2_{\rho}(\mathbb{R}^2)$
\begin{align}
    \mathcal{L}=\Delta-\frac{1}{2}(y \cdot \nabla)+I\label{def:L},
\end{align}
and $L^2_\rho$ is the weighted $L^2$-space defined in \eqref{def:L2rho}, and corresponds to the eigenvalue $\lambda_{ij} = 1 - \frac{i+j}{2}$, i.e. 
\begin{equation}
    \mathcal{L}(H_{ij}) = \left(1 - \frac{i + j}{2}\right) H_{ij}, \quad H_{ij}(y) = h_i(y_1) h_j(y_2), \label{Hij}
\end{equation}
with $h_j$ being Hermite polynomials defined by \ref{hij}. In particular, the family \(\{H_{ij} : i, j = 0, 1, 2, \dots\}\) forms an orthogonal basis for \(L^2_{\rho}(\mathbb{R}^2)\), and the univariate polynomials satisfy the orthogonality relation
\begin{align}
    \frac{1}{4\pi}\int_{\mathbb{R}} h_n(\xi) h_m(\xi) e^{-\xi^2/4} \, d\xi = 2^n n! \delta_{nm}.\label{otho}
\end{align}
For \(y\) in compact sets,  the equation of $\veph$ roughly reads as 
\begin{align*}
  \partial_s\veph=\mathcal{L}\veph+O(se^{-3s} + |\veph|^2), \quad \text{ in }  L^2_{\rho}(\mathbb{R}^2).
\end{align*}
Thanks to the spectral gap
\begin{align*}
    \left<\mathcal{L}\veph_-,\veph_-\right>_{\rho} \leq -\frac{7}{2}\left<\veph_-,\veph_-\right>_{\rho},
\end{align*}
we obtain the energy estimate (see Lemma \ref{Lemma 5.2})
\begin{align}
        \frac{1}{2}\frac{d}{ds}\|\veph_-(s)\|^2_{L^2_{\rho}(\mathbb{R}^2)} \lesssim -\frac{7}{2}\|\veph_-(s)\|^2_{L^2_{\rho}(\mathbb{R}^2)}+e^{-\frac{32}{6}s}.\label{introen1}
\end{align}
For the finite-dimensional part, projecting the equation onto the eigenfunctions \(H_{ij}\) yields the following ODEs for the coefficients 
 \(\veph_{ij}\) (see Lemma \ref{Lemma 5.3}):
\begin{align*}
        \veph_{ij}'(s)=\left(1-\frac{i+j}{2}\right)\veph_{ij}(s)+O(e^{-\frac{32}{12}s}).
\end{align*}
In particular, for \(i+j=8\), we have
\begin{align}
        |\veph_{ij}'(s)+3\veph_{ij}(s)| \lesssim e^{-\frac{32}{12}s}.\label{introen2}
\end{align}
A forward integration in time of (\ref{introen1}) and (\ref{introen2}) yields the estimates:
\begin{align}
    \|\veph_-(s)\|_{L^2_{\rho}(\mathbb{R}^2)} \lesssim e^{-\frac{32}{12}s}, \qquad |\veph_{ij}(s)| \lesssim e^{-\frac{32}{12}s}, \quad i+j = 8.\label{innerest1}
\end{align}
For the remaining modes with \(i+j\leq 7\), we control them by a topological argument where we specify initial data (see \eqref{initialdata} for a definition) so that these components converge to \(0\) as \(s \to \infty\) with the bound
\begin{align}
    |\veph_{ij}(s)| \lesssim e^{-\frac{32}{12}s}, \quad i+j \leq 7.\label{innerest2}
\end{align}
Finally, estimates \ref{innerest1} and \ref{innerest2} combined with a parabolic regularity, yield control of \(\varepsilon\) over compact sets $\{|y| \leq R\}$ for $R > 0$ arbitrary.\\

\item Control of \(\varepsilon\) in \(1 \lesssim  |y|\lesssim e^{\frac{69}{196}s}\) (equivalently, $e^{-s/4} \lesssim |z| \leq e^{\frac{5}{49}s}$). In this region, thanks to the dissipative structure of the parabolic equation (\ref{lineq}), we can derive the following energy estimates (see Lemmas \ref{Lemma5.5} - \ref{Lemma5.7}, and Lemma \ref{lem:intEngz2}):
\begin{align*}
        &\frac{1}{2}\frac{d}{ds}\|(y \cdot \nabla)^k\varepsilon\|_{\flat}^2 \lesssim-\frac{7}{2}\|(y \cdot \nabla)^k\varepsilon\|^2_{\flat}+O(e^{-\frac{32}{6}s}), \quad k=0,1,2,\\
        &\frac{1}{2}\frac{d}{ds}\|(z \cdot \nabla)^k\eta\|_{\natural}^2 \lesssim-\frac{1}{2}\|(z \cdot \nabla)^k\eta\|^2_{\natural}+O(e^{-\frac{5}{6}s}), \quad k=0,1,2, \quad z=ye^{-s/4},
\end{align*}
where, \(\eta(z,s)=\varepsilon(y,s)\). Moreover, the norms \(\|\cdot\|_{\flat}\) and \(\|\cdot\|_{\natural}\) are given by
\begin{align}
        &\|\varepsilon\|_{\flat}=\left(\int_{|y| \geq K}\frac{|\varepsilon|^2}{|y|^{\alpha}}\,\frac{dy}{|y|^2}\right)^{1/2}, \quad \alpha=18.\\
        &\|\eta\|_{\natural}=\left(\int_{|z| \geq K}\frac{|\eta|^2}{|z|^{\gamma}}\,\frac{dz}{|z|^2}\right)^{1/2}, \quad \gamma=8.01,
\end{align}
where \(K \gg 1\) is a large fixed constant. Integration in time combined with Sobolev inequality yields
\begin{align*}
    &\sup_{|y| \geq K}|\langle y \rangle^{-9} \varepsilon(y,s)| \lesssim e^{-\frac{32}{12}s}, \quad\sup_{|z| \geq K}|\langle z \rangle^{-4.005} \eta(z,s)| \lesssim e^{-\frac{5}{12}s}, 
\end{align*}
which controls the pertabation \(\varepsilon\) in the region \(1 \lesssim  |y|\lesssim e^{\frac{69}{196}s}\).

\item Control of \(\varepsilon\) in \(|z| \gtrsim e^{\frac{5}{49}s}\). We control \(\varepsilon\) in this region by controling \(\eta^{ex}(z,s)\), where \(z=ye^{-\frac{s}{4}}\). By estimating both the generated error and the nonlinear term in \ref{lineq}, we obtain an evolution equation for \(\eta^{ex}\) and derive the bound 
\begin{align*}
    |\eta^{ex}(z,s)| \lesssim e^{-s/6}(1-e^{-\epsilon s})(1-e^{-(|ze^{-s/4}|-1)^2})^2|z|^2,
\end{align*}
by the comparison principle. Here, \(\epsilon>0\) is a small fixed constant, the factor \(1-e^{-\epsilon s}\) is a technical factor to to ensure the comparison principle applies, and the factor \((1-e^{-(|ze^{-s/4}|-1)^2})^2\) is chosen such that \(\eta^{ex} \equiv 0\) for \(|z|=e^{\frac{1}{4}s}\), matching the boundary condition (See Condition \ref{eq:wysboundary}). This completes the estimate for \(\varepsilon\) in the outer region \(|z| \gtrsim e^{\frac{5}{49}s}\).
\end{itemize}
The above decomposition is detailed in the bootstrap regime (Definition \ref{Definition 3,1}), in which we successfully construct solutions admitting the behavior described in Theorem \ref{Theorem 1.1}.

We organize the rest of the paper as follows. In Section \ref{section2}, we formally derive the quenching profile \(\mathcal{P}_{\theta}\). Section \ref{section3} introduces the linearized problem and defines a bootstrap regime to control the perturbation \(\varepsilon(y,s)\). In Section \ref{section4}, we establish key estimates for solutions within the bootstrap regime, which are essential for controlling \(\varepsilon\). Finally, in Section \ref{section5}, we prove that the perturbation \(\varepsilon(y,s)\) remains trapped in the bootstrap regime for all time, thus completing the proof of Theorem \ref{Theorem 1.1}.

\vspace{10pt}

\paragraph{\textbf{Notations:}} Let \(\chi\in C^{\infty}([0,\infty))\) be a smooth cutoff function satisfying
\begin{align*}
    \chi(\xi)=\begin{cases}
            1 \quad \text{ if } \quad 0 \leq\xi \leq 1,\\
            0 \quad \text{ if } \quad \xi \geq 2. 
        \end{cases}
\end{align*}
For \(R > 0\), define \(\mchi_R : \mathbb{R}^2 \to \mathbb{R}\) by
\begin{equation}\label{def:chiR}
    \mchi_R(y) = \chi\left(\frac{|y|}{R}\right).
\end{equation}
We write \( M \lesssim N \) to indicate that there exists a constant \( C > 0 \) such that \( M \leq C N \). Similarly, we write \( M \ll N \) if \( C M \leq N \) for some sufficiently large constant \( C > 1 \).
We denote by \(L^2_{\rho}(\mathbb{R}^2)\) the weighted \(L^2\) space 
\[
L^2_{\rho}(\mathbb{R}^2) = \left\{ f \in L^2_{\text{loc}}(\mathbb{R}^2) : \int_{\mathbb{R}^2} |f(y)|^2 \rho(y)\,dy < \infty \right\},
\]
equipped with the inner product
\begin{equation}\label{def:L2rho}
    \langle f, g \rangle_{\rho} = \int_{\mathbb{R}^2} f(y) g(y) \rho(y)\,dy,
\end{equation}
where the weight function \(\rho(y)\) is 
\[
\rho(y) = \frac{1}{4\pi} e^{-\frac{|y|^2}{4}}.
\]
For derivative notations, let \(f(y,s)\) be a function with \(y=(y_1,y_2)\). We use \(\partial_if\) to denote the partial derivative \(\partial_{y_i}f\). Similarly, if \(g(z,s)\) is a function with \(z=(z_1,z_2)\), we write \(\partial_ig\) for \(\partial_{z_i}g\). In this way, the meaning of \(\partial_i\) is understood from the context of the function and its variables.

\section{Derivation of the Nonradial Quenching Profile}
\label{section2}

To establish Theorem \ref{Theorem 1.1}, we construct an approximate quenching profile valid in different spatial regions. This begins with a detailed analysis of the solution in self-similar variables, focusing first on the inner region near the origin. In this section, we derive the leading-order behavior of the solution and construct the nonradial profile \(\mathcal{P}_{\theta}\) defined in \ref{qp} using spectral analysis.\\

We begin by deriving an approximate solution near the quenching point using the self-similar formulation \ref{selfsimvari}, where the function \(w\) satisfies the self-similar equation
\begin{equation}
    \partial_s w = \Delta w - \frac{1}{2} y \cdot \nabla w + \frac{1}{3}w - \frac{1}{w^2} \quad \text{in } \mathcal{B}_s \times (s_0, \infty).\label{selfsimpde}
\end{equation}
Here, \(\mathcal{B}_s = e^{s/2} \mathcal{B}\), with the boundary condition
\[
    w(y,s) = e^{\frac{s}{3}} \quad \text{for } |y| = e^{s/2}.
\]
We observe that \(\hbar= 3^{\frac{1}{3}}\) is a trivial constant solution to (\ref{selfsimpde}). We then linearize around \(c\) by setting
\[
    w(y,s) = \hbar + \widetilde{w}(y,s),
\]
which yields the evolution equation for \(\widetilde{w}\),
\begin{equation}
    \partial_s \widetilde{w} = \Delta \widetilde{w} - \frac{1}{2} y \cdot \nabla \widetilde{w} + \frac{1}{3}(\hbar+\widetilde{w}) - \frac{1}{(\hbar + \widetilde{w})^2}.
\end{equation}
By expanding the nonlinear term using Taylor expansion:
\[
    \frac{1}{(\hbar+\widetilde{w})^2}=\frac{1}{\hbar^2}\left(1-\frac{2\widetilde{w}}{\hbar}+O(\widetilde{w}^2)\right),
\]
and substituting this into the equation for \(\widetilde{w}\), we obtain
\[
    \partial_s \widetilde{w} = \mathcal{L} \widetilde{w} + O(\widetilde{w}^2),
\]
where \(\mathcal{L}\) is the linearized operator introduced in \eqref{def:L}.

Since the nonlinear remainder is roughly quadratic, the leading-order behavior of \(\widetilde{w}\) is governed by the linear equation
\begin{equation}
    \partial_s \widetilde{w} = \mathcal{L} \widetilde{w}.\label{pdew}
\end{equation}

We note from \eqref{Hij} that \(\widetilde{w} = C e^{-s} H_{22}\) is an exact solution to equation (\ref{pdew}) for arbitrary $C \in \mathbb{R}$. To further refine our quenching profile, we take $C = \frac{\hbar}{9}$ (to be agreed with the chosen profile $\mathcal{P}_{\theta}$) and consider the linearization around 
\[
    \hbar + \frac{\hbar}{9}\, e^{-s} H_{22}(y),
\]
by setting
\begin{equation}
    w(y,s) = \hbar + \frac{\hbar}{9} e^{-s} H_{22}(y) + \Sigma(y,s),\label{innpro}
\end{equation}
where \(\Sigma = o(e^{-s})\) in \(L^2_{\rho}(\mathbb{R}^2)\).
\begin{lemma}
\label{Lemma 2.1}

The correction term \(\Sigma(y,s)\) admits the expansion in $L^2_\rho(\mathbb{R}^2)$,
\begin{align*}
    \Sigma(y,s) = e^{-2s} \Bigg[ 
        &\frac{64}{81\hbar} h_0 h_0 
        + \frac{32}{27\hbar} (h_0 h_2 + h_2 h_0) 
        + \frac{64}{27\hbar} h_2 h_2 
        + \frac{8}{27\hbar} (h_0 h_4 + h_4 h_0) \\
        &- \left( \frac{8}{27 \hbar} s + C_{2,4} \right)(h_2 h_4 + h_4 h_2) 
        + C_{0,6} (h_0 h_6 + h_6 h_0) 
        - \frac{2}{54\hbar} h_4 h_4 
    \Bigg] + O(s^2 e^{-3s}).
\end{align*}
\end{lemma}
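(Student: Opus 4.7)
The strategy is to substitute the ansatz $w(y,s) = \bar c + \phi_0(y,s) + \Sigma(y,s)$ with $\phi_0 := \frac{\bar c}{9}e^{-s}H_{22}(y)$ into the self-similar equation, Taylor expand $-1/w^2$ around $\bar c$, and then solve the resulting driven linear equation for $\Sigma$ mode by mode in the Hermite tensor basis. Since $\bar c^3 = 3$, the constant and linear Taylor contributions combine cleanly into the operator $\mathcal{L}$, and because $\mathcal{L}H_{22} = -H_{22}$ the function $\phi_0$ is already an exact solution of the homogeneous problem $\partial_s \phi = \mathcal{L}\phi$. What remains is
\[
\partial_s \Sigma = \mathcal{L}\Sigma - \tfrac{3}{\bar c^4}\bigl(\phi_0 + \Sigma\bigr)^2 + O\bigl((\phi_0+\Sigma)^3\bigr),
\]
where the leading source $-\tfrac{3}{\bar c^4}\phi_0^2$ is of order $e^{-2s}$, and the cross term $\phi_0 \Sigma$ and cubic contributions sit at orders $s\,e^{-3s}$ and $e^{-3s}$ respectively.

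The second step is to expand the source $H_{22}^2$ in the Hermite basis. Using $(y^2-2)^2 = h_4(y) + 8 h_2(y) + 8$ we obtain
\[
H_{22}^2 = h_4 h_4 + 8(h_4 h_2 + h_2 h_4) + 8(h_0 h_4 + h_4 h_0) + 64\, h_2 h_2 + 64(h_0 h_2 + h_2 h_0) + 64\, h_0 h_0,
\]
with the convention $h_i h_j = h_i(y_1) h_j(y_2)$. In particular, the components $h_0 h_6$ and $h_6 h_0$ are absent. Writing $\Sigma = \sum \Sigma_{ij}(s) H_{ij}$ and projecting onto each mode yields scalar ODEs $\Sigma_{ij}'(s) = \lambda_{ij}\Sigma_{ij}(s) + S_{ij}(s)$ with eigenvalues $\lambda_{ij} = 1 - (i+j)/2$ and source $S_{ij}(s)$ proportional to $c_{ij}\, e^{-2s}$, where $c_{ij}$ are the coefficients read off above.

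For indices with $i+j \in \{0,2,4,8\}$ the resolvent factor $\lambda_{ij}+2$ is nonzero, so the unique bounded particular solution $\Sigma_{ij}(s) = -S_{ij}(s)/(\lambda_{ij}+2)$ produces exactly the algebraic prefactors in front of $h_0 h_0$, $h_0 h_2 + h_2 h_0$, $h_2 h_2$, $h_0 h_4 + h_4 h_0$ and $h_4 h_4$ appearing in the claim. The indices with $i+j = 6$ are resonant: for $(i,j) \in \{(2,4),(4,2)\}$ the source is nonzero while $\lambda_{ij} + 2 = 0$, so variation of parameters forces a particular solution $\Sigma_{ij}(s) = -(\text{source coefficient})\, s\, e^{-2s} + C_{2,4}\, e^{-2s}$, producing precisely the $s\, e^{-2s}$ factor in front of $h_2 h_4 + h_4 h_2$; for $(i,j) \in \{(0,6),(6,0)\}$ the source vanishes and only the homogeneous solution $C_{0,6}\, e^{-2s}$ contributes. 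All other Hermite coefficients vanish at this order by direct inspection of the source.

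To upgrade to an $L^2_\rho$ identity with remainder $O(s^2 e^{-3s})$, I would feed the truncated expansion back into the nonlinear right-hand side. The dominant feedback $\phi_0 \Sigma$ is of size $s\,e^{-3s}$ because of the resonant mode in $\Sigma$, and together with $\phi_0^3 \sim e^{-3s}$ drives a next-layer residual equation, which encounters a fresh resonance precisely at $i+j = 8$ (where $\lambda_{ij} + 3 = 0$). Variation of parameters then produces a residual bounded in $L^2_\rho$ by $s^2 e^{-3s}$, the second factor of $s$ arising from the compounding of the two successive resonances. The main technical obstacle will be the careful resonance bookkeeping across these two orders, verifying that the free constants $C_{2,4}$ and $C_{0,6}$ appear only in the claimed modes and that all off-resonant projections indeed decay at the expected rate so that no additional $h_i h_j$ terms survive at the stated precision.
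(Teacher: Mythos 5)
Your proposal takes the same route as the paper: linearize around $\bar c + \frac{\bar c}{9}e^{-s}H_{22}$, decompose $\Sigma$ in the Hermite tensor basis, project the driven equation onto each $H_{ij}$, and solve the resulting scalar ODEs. You supply more detail than the paper at the crucial step — the explicit expansion $h_2^2 = h_4 + 8h_2 + 8h_0$ and hence of $H_{22}^2$ — which makes transparent why the resonance $\lambda_{ij}+2=0$ at $i+j=6$ produces the $s\,e^{-2s}$ term in the $(2,4)$/$(4,2)$ slots while the $(0,6)$/$(6,0)$ slots remain purely homogeneous, and why the second resonance at $i+j=8$ controls the $O(s^2 e^{-3s})$ remainder; this fills gaps the paper leaves implicit.
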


\begin{proof}
Substituting equation (\ref{innpro}) into the self-similar equation (\ref{selfsimpde}), we find that \(\Sigma(y,s)\) satisfies the PDE
\begin{equation}
    \Sigma_s = \mathcal{L} \Sigma - \frac{2}{54\hbar} e^{-2s} H_{22}^2 + o(e^{-2s}) 
    \quad \text{in} \quad L^2_{\rho}(\mathbb{R}^2).\label{pdesigma}
\end{equation}
Using the spectral decomposition of \(\mathcal{L}\), we write
\[
    \Sigma(y,s) = \sum_{i,j=0}^{\infty} w_{ij}(s) H_{ij}(y),
\]
and by projecting equation (\ref{pdesigma}) onto each eigenfunction \(H_{ij}\), we derive the following ODE for each coefficient \(w_{ij}(s)\):
\[
    w_{ij}'(s) = \left(1 - \frac{i+j}{2} \right) w_{ij}(s) + \beta_{ij} e^{-2s} + o(e^{-2s}),
\]
where the coefficients \(\beta_{ij}\) are given by
\[
    \beta_{ij} = - \frac{2}{54\hbar} \cdot \frac{
        \langle h_2^2 h_2^2, h_i h_j \rangle_{\rho}
    }{
        \langle h_i h_j, h_i h_j \rangle_{\rho}
    } 
    = -\frac{2}{54\hbar} \cdot \frac{
        \left( \int_{\mathbb{R}} h_2^2 h_i e^{-y_1^2/4} \, dy_1 \right)
        \left( \int_{\mathbb{R}} h_2^2 h_j e^{-y_2^2/4} \, dy_2 \right)
    }{
        \left( \int_{\mathbb{R}} h_i^2 e^{-y_1^2/4} \, dy_1 \right)
        \left( \int_{\mathbb{R}} h_j^2 e^{-y_2^2/4} \, dy_2 \right)
    }.
\]
Solving for \(w_{ij}(s)\) using the orthogonal property \ref{otho} yields the desired expansion for \(\Sigma(y,s)\).
\end{proof}

\medskip

We observe that the constants \(C_{0,6}\) and $C_{2,4}$ appearing in Lemma \ref{Lemma 2.1} are free parameters. We then set $C_{2,4} = 0$ and \(C_{0,6} =\frac{\hbar}{9} \cdot \theta\) with \(0<\theta < \theta^*\) and introduce
\[
    \Psi_{\theta}(y, s) = \Big(3 + e^{-s} y_1^2 y_2^2 + \theta e^{-2s} (y_1^6 + y_2^6) \Big)^{\frac{1}{3}},
\]
which is valid in the domain \(|y| \lesssim e^{s/4} \). Since \(\Psi_{\theta}(y,s)\) is intended to match the profile
\[
    w(y,s) = \hbar + \frac{\hbar}{9} e^{-s} H_{22}(y) + \Sigma(y,s),
\]
we thus introduce a correction to ensure compatibility. Obseving that by expanding \(\Psi_{\theta}(y,s)\), we obtain
\begin{align*}
    \Psi_{\theta}(y,s) & = \Big(3 + e^{-s} y_1^2 y_2^2 + \theta e^{-2s} (y_1^6 + y_2^6) \Big)^{\frac{1}{3}} \\
    &= \hbar + \frac{\hbar}{9} e^{-s} y_1^2 y_2^2 
       + \frac{\hbar}{9} \cdot \theta e^{-2s} (y_1^6 + y_2^6) 
       - \frac{\hbar}{81} e^{-2s} y_1^4 y_2^4 
       + O(e^{-3s} |y|^{10}), \quad \forall |y| \lesssim e^{s/4}.
\end{align*}
Using this expansion, we define the modified profile

\[
    \Psi_{\theta}^{\mathrm{mod}}(y,s) = \left(3 + e^{-s} y_1^2 y_2^2 + \theta e^{-2s} (y_1^6 + y_2^6)\right)^{\frac{1}{3}} + \mathcal{C}(y,s)\mchi_K(z),
\]
where \(\mathcal{C}(y,s)\) is the correction term \ref{corr}. Note that by adding the correction term \(\mathcal{C}(y,s)\) to \(\Psi_{\theta}(y,s)\), the expansion of the leading terms aligns with that of \(w(y,s)\). This confirms that the modified profile \(\Psi_{\theta}^{\mathrm{mod}}(y,s)\) is indeed compatible with the inner expansion \(w(y,s)\), and remains valid within the region \( |y| \lesssim e^{s/4}\).

To construct an approximate profile in the outer region \(|y|\gtrsim e^{s/4}\), we introduce 
\[
     v(z,s) = w(y, s), \quad z = y e^{-s/4}.
\]
The function \(v\) satisfies the equation
\begin{equation}
    \partial_s v = e^{-s/2} \Delta v - \frac{1}{4} z \cdot \nabla v + \frac{1}{3}v - \frac{1}{v^2} \quad \text{in } \mathcal{D}_s \times (s_0, \infty),
\end{equation}
where \(\mathcal{D}_s = e^{s/4} \mathcal{B}\), with boundary condition
\begin{equation}
    v(z,s) = e^{\frac{s}{3}}, \quad \text{on } |z| = e^{s/4}.\label{bddz}
\end{equation}
Since the term \(e^{-s/2} \Delta v\) becomes negligible as \(s \to \infty\), we consider the simplified equation
\begin{equation} \label{eq:vzs}
    \partial_s v = -\frac{1}{4} z \cdot \nabla v + \frac{1}{3}v - \frac{1}{v^2},
\end{equation}
in the region \(|z| \gtrsim 1\). We note that \(v(z) = (3 + z_1^2 z_2^2)^{\frac{1}{3}}\) is an exact solution of this equation, and to ensure compatibility with the inner profile \(\Psi_{\theta}^{\text{mod}}\) near \(|z| \sim 1\), and to match the boundary behavior from (\ref{bddz}), we slighly modify $\Psi_\theta^{\textup{mod}}$ that leads us to the fully approximated quenching profile \(\mathcal{P}_{\theta}\) defined in \ref{qp}.

\section{Linearization, error decomposition and bootstrap definition}
\label{section3}
With the approximate profile \(\mathcal{P}_{\theta}\) defined as in \eqref{qp}, the next step is to rigorously justify that the true solution remains close to $\mathcal{P}_{\theta}$. To do so, we linearize the equation \eqref{eq:wys} around $\mathcal{P}_{\theta}$, i.e. $w = \mathcal{P}_{\theta} + \varepsilon$, and control the perturbation \(\varepsilon(y,s)\) in a certain bootstrap regime. To be concrete, we show that there exist smooth initial data \(u_0\) such that the corresponding solution \(u\), expressed in self-similar variables, satisfies the convergence
\begin{align*}
    \sup_{0 \leq |y| \leq e^{\frac{69}{196}s}}|w(y,s)-\mathcal{P}_{\theta}(y,s)| \to 0, \quad \sup_{e^{\frac{69}{196}} \leq |y| \leq e^{\frac{1}{2}s}}\left|\frac{w(y,s)-\mathcal{P}_{\theta}(y,s)}{\mathcal{P}_{\theta}(y,s)}\right| \ll 1, \quad \text{as } s \to \infty
\end{align*}
(The technical factor $\frac{69}{196} = \frac{1}{4} + \frac{5}{49}$ is chosen to simplify the nonlinear estimate in the region $|z| \geq e^{\frac{5}{49}s}$, see Lemmas \ref{Lemma5.5}-\ref{Lemma5.7} and Lemma \ref{lem:intEngz2}). The linearization around \(\mathcal{P}_{\theta}\) leads to the evolution equation for the perturbation \(\varepsilon\):
\begin{align}    \partial_s\varepsilon=\mathcal{H}\varepsilon+E+NL(\varepsilon),\label{semilinearpde}
\end{align}
where each term is defined as follows:
\begin{align}
    &\mathcal{H}\varepsilon=\Delta \varepsilon-\frac{1}{2}y \cdot \nabla \varepsilon+\frac{1}{3}\varepsilon \label{H}\\
    &E=-\partial_s\mathcal{P}_{\theta}+\Delta \mathcal{P}_{\theta}-\frac{1}{2}y \cdot \nabla \mathcal{P}_{\theta}+\frac{1}{3}\mathcal{P}_{\theta}-\frac{1}{\mathcal{P}_{\theta}^2},\label{E}\\
    &NL(\varepsilon)=\frac{1}{\mathcal{P}_{\theta}^2}-\frac{1}{(\mathcal{P_{\theta}}+\varepsilon)^2}.\label{NL}
\end{align}
Our main objective is to show that there is an initial data $\varepsilon_0 \in L^\infty(\mathbb{R}^2)$ such that the corresponding solution to \eqref{semilinearpde} is global-in-time and satisfies
\(\|\varepsilon(s)\|_{L^{\infty}(\mathbb{R}^2)} \to 0\) as \(s \to \infty\). For this purpose, we introduce the decomposition
\begin{align}
    &\veph(y,s)=\varepsilon(y,s) \cdot \mchi_{K}(z),\label{eq:decomin}\\
    &\eta^{ex}(z,s)=\eta(z,s) \cdot(1-\mchi_K(ze^{-\frac{5}{49}s})), \quad z=ye^{-\frac{s}{4}}, \label{eq:outdec}
\end{align}
where \(\eta(z,s)=\varepsilon(y,s)\) and \(K \gg 1\) is a large fixed constant. We further decompose \(\veph\) as 
\begin{align}
    \veph(y,s)=\sum_{i+j \leq 8}\veph_{ij}(s)H_{ij}(y)+\veph_-(y,s),\label{decomoin} \quad \quad \veph_- \perp_{L^2_\rho} \{H_{ij}\}_{i+j \leq 8},
\end{align}
where the eigenfunctions \(H_{ij}\) are defined in \ref{Hij}. Note that we have the spectral gap estimate
\begin{equation}
\label{inq:spec gap}
    \langle \mathcal{L} \veph_-, \veph_- \rangle_{L^2_\rho} \leq  - \frac{7}{2}\langle\veph_-,\veph_-\rangle_{\rho},
\end{equation}
and by integration by parts, we have the orthogonality conditions
\begin{align}
    & \partial_k\veph \perp_{L^2_\rho} \{H_{ij}\}_{i+j \leq 7}, \quad k=1,2, \label{con:orth 1}\\
    & \partial^2_{kl}\veph \perp_{L^2_\rho} \{H_{ij}\}_{i+j \leq 6}, \quad k,l=1,2,\label{con:orth 2}
\end{align}
which leads to the spectral gap estimates
\begin{align}
    &\left<\mathcal{L} \partial_k \veph_-,\partial_k \veph_-\right>_{\rho} \leq -3\left<\partial_k \veph_-,\partial_k \veph_-\right>, \quad k=1,2,\label{inq:spec partial i}\\
    &\left<\mathcal{L}\partial^2_{kl}\varepsilon_-,\partial^2_{kl}\varepsilon_-\right>_{\rho} \leq -\frac{5}{2}\left<\partial^2_{kl}\varepsilon_-,\partial^2_{kl}\varepsilon_-\right>_{\rho}, \quad k,l=1,2. \label{inq:spec partial ij}
\end{align}
To fully control the perturbation
\(\varepsilon(y,s)\) and close the nonlinear estimate, we introduce the following bootstrap regime.

\begin{definition}
\label{Definition 3,1}
Let $A_1, A_2.A_3, B_1, B_2, B_3, C_1, C_2, C_3, D_1$ and $s$ be positive, we denote by \(S(s)\) the set of functions \(\varepsilon(y,s)\) such that the decomposition (\ref{eq:decomin}) and (\ref{eq:outdec}) satisfy \\
(i) (Inner control)
\begin{align}
    &\left( \sum_{i+j \leq 7} |\veph_{ij}(s)|^2 \right)^{\frac{1}{2}} \leq A_1 e^{-\frac{32}{12}s},\label{in1}\\
    &|\veph_{ij}(s)| \leq A_1e^{-\frac{32}{12}s}, \text{ for } i+j =8, \label{in2}\\
    &\|\veph_-(s)\|_{L^2_{\rho}(\mathbb{R}^2)} \leq A_1 e^{-\frac{32}{12}s} \label{in3},\\
    &\|\partial_i\veph_-(s)\|_{L^2_{\rho}(\mathbb{R}^2)} \leq A_2 e^{-\frac{32}{12}s}, \quad i=1,2,\label{in4}\\
    &\|\partial^2_{ij}\veph_-(s)\|_{L^2_{\rho}(\mathbb{R}^2)} \leq A_3 e^{-\frac{32}{12}s}, \quad i,j=1,2, \label{in5}
\end{align}
(ii) (Intermediate control)
\begin{align}
    &\|(y \cdot \nabla)^k\varepsilon\|_{\flat} \leq B_{k+1} e^{-\frac{32}{12}s}, \quad k=0,1,2,\label{int1}\\
   \quad  &\|(z \cdot \nabla)^k \eta \|_{\natural} \leq C_{k+1} e^{-\frac{5}{12}s}, \label{bootstrap: int 2} \quad k=0,1,2,
\end{align}
where \(\eta(z,s)= \varepsilon(y,s)\). Moreover, the norms \(\
\|\varepsilon\|_{\flat}\) and \(\
\|\eta\|_{\natural}\) is defined by
\begin{align}
        &\|\varepsilon\|_{\flat}=\left(\int_{|y| \geq K}\frac{|\varepsilon|^2}{|y|^{\alpha}}\,\frac{dy}{|y|^2}\right)^{\frac{1}{2}}, \quad \alpha=18,\label{int2}\\
        &\|\eta\|_{\natural}=\left(\int_{|z| \geq K}\frac{|\eta|^2}{|z|^{\gamma}}\,\frac{dy}{|z|^2}\right)^{\frac{1}{2}}, \quad \gamma=8.01\label{bootstrap: int norm 2},
\end{align}
and \(K\gg 1\) is the large fixed constant in \ref{eq:decomin} and \ref{eq:outdec}.\\
(iii) (Outer control)
\begin{align}
       &|\eta^{ex}(z,s)| \leq D_1h(z,s), \quad h(z,s)=\epsilon e^{-s/6}(1-e^{-\epsilon s})(1-e^{-(|ze^{-s/4}|-1)^2})^2|z|^2,\label{out2}
\end{align}
where \(\epsilon>0\) is a small fixed constant such that \(|\eta^{ex}(z,s)/\mathcal{P}_{\theta}(z,s)| \ll 1\). This choice of \(\epsilon>0\) simplifies the bound for the outer nonlinear estimate (See Lemma \ref{Lemma4.1}(iv)).
\end{definition}
\begin{remark}The bootstrap constants are chosen in the order 
\begin{align*}
     D_1 \gg C_3 \gg C_2 \gg C_1 \gg B_3\gg B_2\gg B_1\gg A_3\gg A_2 \gg A_1 \gg 1.
\end{align*}
\end{remark}

 \subsection{Existence of Solutions in the Bootstrap Regime}
We show that for suitable initial data \(\varepsilon_0(y) \in S(s_0)\), the corresponding solution
\(\varepsilon(y,s)\) of \ref{semilinearpde} remains trapped in \(S(s)\) for all \( s \geq s_0\), thereby proving Theorem \ref{Theorem 1.1}. For fixed \(s_0=s_0(A_1, A_2.A_3, B_1, B_2, B_3, C_1, C_2, C_3, D_1,K) \gg 1\), we consider initial data of the form
\begin{align}
    \varepsilon_0(y)= \psi_{s_0}[\textbf{c}](y)=\eta_0(z)&=e^{-\frac{32}{12}s_0}\left(\sum_{i+j \leq 7}c_{ij}H_{ij}(y)\right)\chi(z), \quad \mathbf{c} = (c_{ij})_{0 \leq i+j \leq 7}.\label{initialdata}
\end{align}
Here, \(\chi(z):=\mchi_R(z)\) with \(R=1\) (see \ref{def:chiR}). We now claim the following.
\begin{lemma}
\label{lem:initial data bound} The initial data of \eqref{eq:wys} is of the form
\[
w_0(y) = \mathcal{P}_{\theta}(z, s_0) + \psi_{s_0}[\mathbf{c}](y),
\]
and satisfies \(0<w_0(y) \leq e^{\frac{s_0}{3}}\) for all \(|y| \leq e^{\frac{s_0}{2}}\) and \(w_0(y)=e^{\frac{s_0}{3}}\) on \(|y|=e^{\frac{s_0}{2}}\).
\end{lemma}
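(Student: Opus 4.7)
The strategy is to verify separately the three requirements in the statement, namely (a) the Dirichlet boundary value $w_0 = e^{s_0/3}$ on $\{|y|=e^{s_0/2}\}$, (b) the uniform upper bound $w_0 \leq e^{s_0/3}$ on the disc, and (c) strict positivity $w_0 > 0$. The key structural observation is that the cutoffs in \eqref{qp} organize $\{|y| \leq e^{s_0/2}\}$ into regions indexed by $|z|=|y|e^{-s_0/4}$, and that $\psi_{s_0}[\mathbf{c}](y)$ is supported in $\{|z| \leq 2\}$ with the tiny prefactor $e^{-32s_0/12}$. The plan is therefore to verify (a)--(c) for $\mathcal{P}_\theta(y,s_0)$ with a definite margin, then absorb $\psi_{s_0}[\mathbf{c}]$ using the smallness of that prefactor.

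For (a), at $|y| = e^{s_0/2}$ we have $|z| = e^{s_0/4} = \kappa(s_0)$, so $|z|/(\tfrac{1}{4}\kappa(s_0)) = 4 \geq 2$ forces $\mchi_{\tfrac{1}{4}\kappa(s_0)}(z) = \chi(4) = 0$. For $s_0$ large enough that $e^{s_0/4} \geq 2K$ and $e^{s_0/4} \geq 2$, the cutoffs $\mchi_K(z)$ and $\chi(z)$ in \eqref{qp} and \eqref{initialdata} also vanish, giving $\mathcal{P}_\theta(y,s_0)=e^{s_0/3}$ and $\psi_{s_0}[\mathbf{c}](y)=0$ on this boundary. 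For (b) and (c), the plan is to partition the disc into three zones by $|z|$: Region I where $|z|\leq \tfrac{1}{4}\kappa(s_0)$ (so $\mathcal{P}_\theta = \Psi_\theta + \mathcal{C}\mchi_K$), Region II where $\tfrac{1}{4}\kappa(s_0) \leq |z| \leq \tfrac{1}{2}\kappa(s_0)$ (where $\mathcal{C}\mchi_K = 0$ for $s_0$ large since $|z| \geq \tfrac{1}{4}e^{s_0/4} \gg 2K$, and $\mathcal{P}_\theta$ is a convex combination of $\Psi_\theta$ and $e^{s_0/3}$), and Region III where $|z| \geq \tfrac{1}{2}\kappa(s_0)$ (where $\mathcal{P}_\theta \equiv e^{s_0/3}$).

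The decisive estimate is on $\Psi_\theta(z,s_0)^3 = 3 + z_1^2 z_2^2 + \theta e^{-s_0/2}(z_1^6 + z_2^6)$. On Regions I and II, the bound $|z| \leq \tfrac{1}{2}e^{s_0/4}$ combined with AM-GM gives $z_1^2 z_2^2 \leq |z|^4/4 \leq e^{s_0}/64$ and $\theta e^{-s_0/2}(z_1^6+z_2^6) \leq \theta |z|^6 e^{-s_0/2} \leq \theta e^{s_0}/64$, so
\[
\Psi_\theta(z,s_0) \leq \left(3 + \frac{1+\theta}{64}e^{s_0}\right)^{1/3}.
\]
Provided $\theta^*$ is small enough that $(1+\theta^*)/64 < 1$, this yields $\Psi_\theta \leq (1-\delta_0) e^{s_0/3}$ for some $\delta_0 > 0$ and $s_0$ large. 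The correction $\mathcal{C}(y,s_0)\mchi_K(z)$ is supported in $\{|z| \leq 2K\}$, where $|y| \leq 2Ke^{s_0/4}$; plugging this into the explicit polynomial formula \eqref{corr} with its prefactors $e^{-s}$ and $e^{-2s}$ yields $|\mathcal{C}\mchi_K| \leq C(K)e^{-s_0/2}$. Finally, on $\{|z|\leq 2\}$ the Hermite polynomials satisfy $|H_{ij}(y)| \lesssim (2e^{s_0/4})^{i+j}$ for $i+j\leq 7$, so $|\psi_{s_0}[\mathbf{c}](y)| \lesssim \|\mathbf{c}\|_\infty e^{-32s_0/12 + 7s_0/4} = \|\mathbf{c}\|_\infty e^{-11s_0/12}$. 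Combining these three bounds yields (b) for $s_0$ large. For (c) one further uses $\Psi_\theta \geq 3^{1/3} = \bar c$ and $e^{s_0/3} \geq \bar c$ for $s_0 \geq \log 3$ to conclude $\mathcal{P}_\theta \geq \bar c - C(K)e^{-s_0/2} \geq \bar c/2$ uniformly on the disc, which strictly dominates the $O(e^{-11s_0/12})$ perturbation.

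The main point to monitor is the upper bound in Region II, which forces a quantitative smallness condition on $\theta^*$. This condition is compatible with the other smallness requirements placed on $\theta^*$ in later sections of the paper, so the lemma should go through without additional difficulty; the remaining estimates reduce to elementary algebra on cube roots and the polynomial growth of Hermite polynomials, together with the vanishing of the cutoffs at their prescribed scales.
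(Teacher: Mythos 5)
Your proof is correct and follows essentially the same approach as the paper: partition the disc by the self-similar scale $|z|=|y|e^{-s_0/4}$, bound $\mathcal{P}_\theta$ on each piece (using the fact that the transition region is a convex combination of $\Psi_\theta$ and $e^{s_0/3}$), and absorb the $O(e^{-11s_0/12})$ perturbation $\psi_{s_0}[\mathbf{c}]$. The only differences are cosmetic: you cut at $|z|=\tfrac14\kappa(s_0)$ and $\tfrac12\kappa(s_0)$ rather than at $|z|=2K$ and $\tfrac12\kappa(s_0)$, and you make explicit via AM--GM the smallness needed of $\theta^*$, which the paper merely asserts.
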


\begin{proof}
For \(0 \leq |y| \leq 2Ke^{\frac{s_0}{4}}\), by the definition of \(\mathcal{P}_{\theta}\) in \ref{qp}, we have
\begin{align*}
    0 <w_0(y)&=\left(3 + z_1^2 z_2^2 + \theta e^{-\frac{s_0}{2}}(z_1^6 + z_2^6) \right)^{\frac{1}{3}}\mchi_{\frac{1}{4}\kappa(s_0)}(z) + \mathcal{C}(y,s) \mchi_K(z)+e^{-\frac{32}{12}s_0}\left(\sum_{i+j \leq 7}c_{ij}H_{ij}(y)\right)\chi(z)\\
    & \lesssim \left(3 + \frac{1}{2}|z|^4 + \theta e^{-\frac{s_0}{2}}|z|^6\right)^{\frac{1}{3}}\mchi_{\frac{1}{4}\kappa(s_0)}(z)+s_0e^{-\frac{s_0}{2}}+e^{-\frac{11}{12}s_0} \lesssim 1.
\end{align*}
For \(2Ke^{\frac{s_0}{4}} \leq |y| \leq \frac{1}{2}e^{\frac{s_0}{2}}\), since \(0<\theta<\theta^*\) for some \(\theta^*>0\),\\
we have
\(\left(3 + z_1^2 z_2^2 + \theta e^{-s/2}(z_1^6 + z_2^6)\right)^{\frac{1}{3}}\mchi_{\frac{1}{4}\kappa(s_0)}(z)<e^{\frac{s_0}{3}}\), and hence
\begin{align*}
    0<w_0(y) &= \left(3 + z_1^2 z_2^2 + \theta e^{-\frac{s_0}{2}}(z_1^6 + z_2^6) \right)^{\frac{1}{3}}\mchi_{\frac{1}{4}\kappa(s_0)}(z) +e^{\frac{s_0}{3}}\mchi_{\frac{1}{4}\kappa(s_0)}(z)\\
    & \leq e^{\frac{s_0}{3}}\mchi_{\frac{1}{4}\kappa(s_0)}(z)+e^{\frac{s_0}{3}}\left(1-\mchi_{\frac{1}{4}\kappa(s_0)}(z)\right) \leq e^{\frac{s_0}{3}}.
\end{align*}
Finally, for \(\frac{1}{2}e^{\frac{s_0}{2}} \leq |y| \leq e^{\frac{s_0}{2}}\), we have by definition
\begin{align*}
    w_0(y)=e^{\frac{s_0}{3}}.
\end{align*}
This concludes the proof of Lemma \ref{lem:initial data bound}.
\end{proof}

\begin{proposition}
\label{proposition3.1}
Suppose \(|c_{ij}|<1\), then 
\(\varepsilon_0(y)\in S(s_0)\) with strict inequalities.
\end{proposition}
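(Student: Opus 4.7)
The plan is to verify directly all the bootstrap conditions (in1)--(in5), (int1), (bootstrap: int 2), and (out2) in Definition \ref{Definition 3,1} by exploiting the explicit form of $\varepsilon_0 = \psi_{s_0}[\mathbf{c}]$. The key structural observation is that $\varepsilon_0$ has compact support in $\{|z| \leq 2\}$, equivalently $\{|y| \leq 2 e^{s_0/4}\}$. Since $K \gg 1$ is fixed, we have $\chi(z)\,\mchi_K(z) = \chi(z)$ on this support, so at $s = s_0$ the decomposition \eqref{eq:decomin} reduces to
\[
    \veph(y, s_0) \;=\; e^{-\frac{32}{12}s_0}\Big(\sum_{i+j \leq 7} c_{ij}\, H_{ij}(y)\Big)\chi(z).
\]

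For the inner control, I would first compute the Hermite projections. Splitting $\chi(z) = 1 - (1 - \chi(z))$ and invoking the orthogonality \eqref{otho} yields
\[
    \veph_{ij}(s_0) \;=\; c_{ij}\,e^{-\frac{32}{12}s_0} \;-\; \frac{e^{-\frac{32}{12}s_0}}{\|H_{ij}\|_\rho^2}\sum_{k+l \leq 7} c_{kl}\int_{|y| \geq e^{s_0/4}} H_{kl}(y) H_{ij}(y)(1 - \chi(z))\rho(y)\,dy
\]
for $i+j \leq 7$, and only the remainder integral for $i+j = 8$. Since the remainder is integrated against the Gaussian weight on $\{|y| \geq e^{s_0/4}\}$, it is of order $e^{-c\,e^{s_0/2}}$, vastly smaller than $e^{-\frac{32}{12}s_0}$. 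Consequently (in1) and (in2) reduce to the elementary bound $(\sum_{i+j \leq 7} c_{ij}^2)^{1/2} < A_1$, which holds strictly for $A_1 \gg 1$ since $|c_{ij}| < 1$. For $\veph_-$ and its derivatives in (in3)--(in5), I would write $\veph_- = \veph - \sum_{i+j \leq 8} \veph_{ij} H_{ij}$: in the bulk $\{|z| \leq 1\}$ the two pieces cancel up to super-exp-small corrections, while in the transition strip $\{1 \leq |z| \leq 2\}$ the Gaussian weight dominates. The derivative versions follow similarly, using that $\partial_{y_i}\chi(|z|)$ carries an extra factor $e^{-s_0/4}$ and is supported only in that strip.

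The intermediate and outer estimates are essentially driven by the compact support of $\varepsilon_0$. For $s_0$ large enough that $K\,e^{5 s_0/49} > 2$, we have $\eta^{ex}(z, s_0) \equiv 0$, so (out2) is trivial. Since $K > 2$, the integration domain $\{|z| \geq K\}$ in $\|\cdot\|_\natural$ lies outside $\mathrm{supp}\,\chi$, giving $\|(z\cdot\nabla)^k \eta\|_\natural = 0$ at $s_0$ for $k = 0, 1, 2$. For the $\|\cdot\|_\flat$ norms, using $\deg H_{ij} \leq 7$ and that $y\cdot\nabla$ preserves polynomial degree, the integrand is bounded by $C e^{-\frac{32}{6}s_0}|y|^{14}$ on $\{K \leq |y| \leq 2 e^{s_0/4}\}$. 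Integration against the weight $|y|^{-20}\,dy$ gives $\|(y\cdot\nabla)^k \varepsilon_0\|_\flat^2 \lesssim e^{-\frac{32}{6}s_0}$, which is strictly less than $B_{k+1}^2\, e^{-\frac{32}{6}s_0}$ for $B_{k+1} \gg 1$.

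The proposition is a direct verification with no real obstacle. The mildly subtle ingredient is the Hermite-projection argument above, which relies critically on the super-exp-small Gaussian tail beyond $|y| \sim e^{s_0/4}$ --- this is precisely why the scale $e^{s_0/4}$ in the cutoff of the initial data is harmless for the inner analysis while also being compatible with the intermediate and outer norms. Strict inequality in each condition then follows by the ordering $D_1 \gg \cdots \gg A_1 \gg 1$ specified in the remark after Definition \ref{Definition 3,1}.
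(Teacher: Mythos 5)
Your argument is correct and follows essentially the same route as the paper: you compute the Hermite projections $\veph_{ij}(s_0)$, observe that the corrections from the cutoff $\chi(z)$ are super-exponentially small in $s_0$ (the paper writes this as $1 + O(e^{-100 s_0})$), and use the compact support of $\varepsilon_0$ in $\{|z| \leq 2\}$ together with $K > 2$ to handle the $\flat$-, $\natural$-, and outer conditions. The only difference is cosmetic: your observation that $\chi(z)\,\mchi_K(z) = \chi(z)$ and that the $\natural$-norm vanishes identically makes the intermediate and outer checks slightly more transparent than the paper's brief ``by a direct calculation.''
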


\begin{proof}
    First, we check the bootstrap conditions (\ref{in1})-(\ref{in5}). Note that we have the relations
    \begin{align*}
        \psi_{s_0}[\textbf{c}](y) \cdot \mchi_K(z)&=e^{-\frac{32}{12}s_0}\left(\sum_{i+j \leq 7}c_{ij}H_{ij}(y)\right)\chi(z) =\sum_{i+j \leq 8}\veph_{ij}(s_0)H_{ij}(y)+\veph_-(y,s_0).
    \end{align*}
    Thus, for \(k+l \leq 7\), we have 
        \begin{align*}
        \varepsilon_{kl}(s_0)&=\left<\psi_{s_0}[\textbf{c}]\cdot \mchi_K(z),H_{kl}\right>_{\rho}/\left<H_{kl},H_{kl}\right>_{\rho}\\
        &=\left<e^{-\frac{32}{12}s_0}\left(\sum_{i+j \leq 7}c_{ij}H_{ij}\right)\chi(z),H_{kl}\right>_{\rho}/\left<H_{kl},H_{kl}\right>_{\rho}\\
        &=\sum_{i+j \leq 7}c_{ij}e^{-\frac{32}{12}s_0}\left(\frac{\left<H_{ij}\chi(z),H_{kl}\right>_{\rho}}{\left<H_{kl},H_{kl}\right>_{\rho}}\right) = c_{kl}e^{-\frac{32}{12}s_0}\big(1 + O(e^{-100 s_0}) \big).
    \end{align*}
This  deduces
\begin{align*}
        &\left(\sum_{i+j \leq 7}|\veph_{ij}(s_0)|^2\right)^{\frac{1}{2}} \ll A_1e^{-\frac{32}{12}s_0},       
\end{align*}
for \(A_1 \gg 1\). Clearly, we have \(|\veph_{ij}(s_0)| \ll A_1e^{-\frac{32}{12}s_0}\) for \(i+j=8\), which shows the bootstrap condition \ref{in2} hold for \(s=s_0\) with a strict inequality. As for the bootstrap condition \ref{in3}, we have
    \begin{align*}
        \veph_-(y,s_0)&=\psi_{s_0}[\textbf{c}](y)\cdot \mchi_{K}(z)-\sum_{i+j \leq 8}\veph_{ij}(s_0)H_{ij}\\
        &=\sum_{i+j \leq 7}(e^{-\frac{32}{12}s_0}c_{ij}\chi(z)-\veph_{ij}(s_0))H_{ij}(y)-\sum_{i+j=8}\veph_{ij}(s_0)H_{ij},
    \end{align*}
which implies
\begin{align*}
    \|\veph_-(s_0)\|_{L^2_{\rho}(\mathbb{R}^2)}\ll A_1e^{-\frac{32}{12}s_0},
\end{align*}
and hence the bootstrap condition (\ref{in3}) holds for \(s=s_0\) with a strict inequality. By differentiating the expression of \(\veph_-\), we derive that

\begin{align*}
    \|\partial_i\veph_-(s_0)\|_{L^2_{\rho}(\mathbb{R}^2)}  \ll A_2 e^{-\frac{32}{12}s_0}, \quad i=1,2,
\end{align*}
for \(A_2 \gg A_1\) and  
\begin{align*}
     \|\partial^2_{ij}\veph_-(s_0)\|_{L^2_{\rho}(\mathbb{R}^2)}  \ll A_3 e^{-\frac{32}{12}s_0}, \quad i,j=1,2,
\end{align*}
for \(A_3 \gg A_2\). This shows the bootstrap condition \ref{in4} and \ref{in5} hold with a strict inequality at \(s=s_0\), respectively. Next, by a direct calculation, we have \(\|(y \cdot \nabla)^k \varepsilon_0\|_{\flat} \ll B_{k+1}e^{-\frac{32}{12}s_0}\) and \(\|(z \cdot \nabla)^k \eta_0\|_{\flat} \ll C_{k+1}e^{-\frac{5}{12}s_0}\), thus proving the bootstrap condition \ref{int1} and \ref{bootstrap: int 2} is valid at \(s=s_0\) with a strict inequality. Finally, the bootstrap condition \ref{out2} holds with a strict inequality at \(s=s_0\) since \(|\eta_0(z)| \equiv
0\) for \(|z| \geq Ke^{\frac{5}{49}s}\).
\end{proof} 

By proposition \ref{proposition3.1}, given initial data of the form 
 \(\varepsilon_0=\psi_{s_0}[\textbf{c}]\), with \(|c_{ij}|<1\), the local Cauchy problem (see \cite{kavallaris2018non}) ensures the existence of a solution  \(\varepsilon(y,s)\in S(s)\) for all \(s \in [s_0,s^*]\). Thus, theorem \ref{Theorem3.1} follows if we can show that 
\(s^*=+\infty\), which we claim in the following.

\begin{theorem}
\label{Theorem3.1} There are $\mathbf{c} = (c_{ij})_{0 \leq i+j \leq 7}$ with \(|c_{ij}|<1\) such that the solution $\varepsilon(y,s)$ to \eqref{semilinearpde} with the intial data \(\varepsilon_0=\psi_{s_0}[\textbf{c}]\), satisfies \(\varepsilon(s) \in S(s)\) for all \(s \in [s_0,\infty)\).
\end{theorem}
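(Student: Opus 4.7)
The plan is to combine a bootstrap argument with a finite-dimensional topological shooting in the parameter $\mathbf{c} \in \mathbb{R}^{36}$, where $36$ is the number of pairs $(i,j)$ with $0 \leq i+j \leq 7$. By Proposition \ref{proposition3.1}, for every $\mathbf{c}$ with $|c_{ij}|<1$ the initial datum $\varepsilon_0 = \psi_{s_0}[\mathbf{c}]$ lies strictly inside $S(s_0)$, so a maximal exit time $s^*(\mathbf{c}) \in (s_0, \infty]$ is well defined. The target is to exhibit some $\mathbf{c}$ for which $s^*(\mathbf{c}) = \infty$. The first step, carried out in Section \ref{section5}, is a \emph{reduction to a finite-dimensional obstruction}: whenever $\varepsilon(\cdot, s) \in S(s)$ on $[s_0, s^*]$, every bootstrap bound \emph{except} \eqref{in1} must be strict at $s^*$. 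Specifically, \eqref{in3}--\eqref{in5} self-improve via the spectral-gap energy inequalities \eqref{inq:spec gap}, \eqref{inq:spec partial i}, \eqref{inq:spec partial ij} (dissipation rates $7/2, 3, 5/2$, each strictly greater than $32/12$) together with Grönwall; \eqref{in2} self-improves via the scalar ODE $\veph_{ij}'(s) = -3\veph_{ij}(s) + O(e^{-32s/12})$ since $-3 < -32/12$; the intermediate bounds \eqref{int1}--\eqref{bootstrap: int 2} self-improve via the weighted $\flat$ and $\natural$ energy estimates outlined in Section \ref{section1}; and \eqref{out2} self-improves by the comparison principle applied to $\eta^{ex}$ against the barrier $h(z,s)$. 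Each improvement is made possible by the hierarchy $D_1 \gg C_i \gg B_i \gg A_i \gg 1$, so the only mechanism for exit is through \eqref{in1}.

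Next I establish \emph{outgoing transversality} on the critical sphere underlying \eqref{in1}. Projecting \eqref{semilinearpde} onto each $H_{ij}$ with $i+j \leq 7$ yields the scalar ODE
\[
\veph_{ij}'(s) = \lambda_{ij}\,\veph_{ij}(s) + R_{ij}(s), \qquad \lambda_{ij} = 1 - \frac{i+j}{2},
\]
with $|R_{ij}(s)| = O(e^{-32s/12})$ uniformly on $S(s)$, the implicit constant depending only on $B_i, C_i, D_1$ and not on $A_1$. The arithmetic observation
\[
\lambda_{ij} + \frac{32}{12} \geq -\frac{5}{2} + \frac{32}{12} = \frac{1}{6} > 0, \qquad i+j \leq 7,
\]
shows that the rescaled vector $V(s) = e^{32s/12}\bigl(\veph_{ij}(s)\bigr)_{i+j \leq 7}$ satisfies
\[
\frac{d}{ds}|V(s)|^2 \geq \frac{1}{3}|V(s)|^2 - C\,|V(s)|.
\]
Hence on the sphere $|V| = A_1$ the right-hand side is bounded below by $A_1(A_1/3 - C) > 0$ once $A_1$ is chosen large enough. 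By a standard continuation argument, this transversality implies that whenever $s^*(\mathbf{c}) < \infty$, both $s^*(\mathbf{c})$ and the exit direction $\Phi(\mathbf{c}) = V(s^*(\mathbf{c}))/A_1 \in S^{35}$ depend continuously on $\mathbf{c}$.

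I conclude via a \emph{Brouwer retraction argument} in the spirit of Merle--Zaag \cite{merle2024degenerate}. By the orthogonality computation in Proposition \ref{proposition3.1}, $V_{ij}(s_0) = c_{ij}(1 + O(e^{-100 s_0}))$, so $\mathbf{c} \mapsto V(s_0)$ is a near-identity homeomorphism near the origin. After a harmless rescaling of the normalization of $c_{ij}$, one arranges the admissible parameter set to be the closed ball $\overline{B}(0, A_1) \subset \mathbb{R}^{36}$, with $V(s_0) = A_1\,\mathbf{c}/|\mathbf{c}|+o(1)$ on its boundary. Assume for contradiction that $s^*(\mathbf{c}) < \infty$ for every $\mathbf{c} \in \overline{B}(0, A_1)$. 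Then $\Phi: \overline{B}(0, A_1) \to A_1\,S^{35}$ is continuous, and on $\partial \overline{B}(0, A_1)$ transversality forces $s^*(\mathbf{c}) = s_0$ and $\Phi(\mathbf{c}) = V(s_0) + o(1)$, so $\Phi|_{\partial \overline{B}}$ is isotopic to the identity on $A_1 S^{35}$. This produces a continuous retraction of $\overline{B}(0, A_1)$ onto its boundary sphere, contradicting Brouwer's no-retraction theorem. Therefore some $\mathbf{c}$ with $|c_{ij}|<1$ satisfies $s^*(\mathbf{c}) = \infty$, proving Theorem \ref{Theorem3.1}.

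The \emph{main obstacle} I anticipate lies in the careful control of the remainders $R_{ij}$ in the finite-dimensional ODEs and of the cross-region coupling in the intermediate and outer energy estimates. The profile $\mathcal{P}_{\theta}$ is assembled from three cutoff regions, and the nonlinearity $\frac{1}{\mathcal{P}_{\theta}^2} - \frac{1}{(\mathcal{P}_{\theta}+\varepsilon)^2}$ couples their contributions; closing transversality requires all such cross-region contributions to project onto $H_{ij}\rho$, $i+j \leq 7$, with size $O(e^{-32s/12})$ whose constant is uniform in $A_1$. This is where the choice of weights (compact Gaussian inside, $|y|^{-\alpha}$ and $|z|^{-\gamma}$ outside) together with the hierarchy between the bootstrap constants $A_i, B_i, C_i, D_1$ becomes essential.
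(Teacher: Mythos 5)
Your overall architecture matches the paper's exactly: Theorem~\ref{Theorem3.1} is reduced to Proposition~\ref{Proposition3.2} (all bootstrap bounds self-improve except \eqref{in1}) plus a finite-dimensional topological shooting in the $36$-dimensional parameter $\mathbf{c}$, with the outgoing transversality and a Brouwer no-retraction contradiction. Your topological setup is arguably cleaner than the paper's (which introduces a small ball $B(0,r)\subset D(\Theta)$ and the map $f(x)=\Theta(rx)$); working directly on the full bootstrap ball $\overline{B}(0,A_1)$ with the exit map $\Phi$ and a degree/isotopy argument is the more transparent form of the same idea.

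There is, however, one step that would fail as written. You bound the remainders by $|R_{ij}(s)|\le C\,e^{-\frac{32}{12}s}$ with $C$ depending on $B_i,C_i,D_1$, arrive at
\[
\frac{d}{ds}|V|^2 \;\geq\; \tfrac13|V|^2 - C|V|,
\]
and close the transversality by ``choosing $A_1$ large enough'' so that $A_1/3-C>0$. This cannot be reconciled with the paper's mandatory hierarchy $D_1\gg C_i\gg B_i\gg A_i\gg 1$: your constant $C$ inherits its size from $B_i,C_i,D_1$, all of which dominate $A_1$, so $A_1/3-C$ is \emph{negative} in the regime the rest of the proof requires. The actual mechanism in the paper (Lemma~\ref{Lemma 5.3} and Lemma~\ref{Lemma5.4}) is that the projected source $\hat G$ satisfies $\|\hat G\|_{L^2_\rho}=o(se^{-3s})$, which is strictly better than $e^{-\frac{32}{12}s}=e^{-\frac83 s}$; hence, after multiplying by $e^{\frac{32}{12}s}$, the remainder term is $o(se^{-\frac{s}{3}})$ and decays to zero. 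The transversality is then closed by taking $s_0\gg 1$ (which makes the $o(1)$ term at the exit time negligible), not by taking $A_1$ large. Your estimate $R_{ij}=O(e^{-\frac{32}{12}s})$ discards exactly this extra exponential margin and the resulting constant mismatch is a real gap. The fix is straightforward --- record the sharper bound $|R_{ij}(s)|\lesssim se^{-3s}$ and conclude $\frac{d}{ds}|V|^2\ge \tfrac13|V|^2 - o(1)|V|$ with $o(1)\to 0$ as $s\to\infty$, then close by $s_0\gg 1$ --- but as stated the argument does not close under the required constant hierarchy.
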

The proof of Theorem \ref{Theorem3.1} follows directly from the two following propositions.
\begin{proposition}[Reduction to a finite dimensional problem]
\label{Proposition3.2}
For any \(s_1 \gg s_0\), let \(\varepsilon(y,s)\in S(s)\) for all \(s \in [s_0,s_1]\), then all the bootstrap estimates in Definition \ref{Definition 3,1} hold for all \(s \in [s_0,s_1]\) with strict inequalities except possibly for (\ref{in1}).
\end{proposition}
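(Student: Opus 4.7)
The plan is a standard bootstrap/trapping argument: assuming $\varepsilon(\cdot,s) \in S(s)$ on the entire interval $[s_0, s_1]$, I aim to improve by a strict factor every bound in Definition \ref{Definition 3,1} except \eqref{in1}. The improvements come from three separate mechanisms, matched to the three spatial regions: (i) ODE integration for the finite-dimensional projections $\veph_{ij}$ with $i+j = 8$; (ii) weighted-$L^2$ coercive energy estimates for the infinite-dimensional residuals $\veph_-$, its derivatives, and the intermediate weighted norms $\|\cdot\|_\flat$ and $\|\cdot\|_\natural$; and (iii) a pointwise maximum-principle comparison with the explicit barrier $h(z,s)$ for the outer piece $\eta^{ex}$. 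A prerequisite in each case is a bound on the generator error $E$ from \eqref{E} and on the nonlinear remainder $NL(\varepsilon)$ from \eqref{NL}, the latter being subcritical under the bootstrap; these are presumably the content of Section \ref{section4}.

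For the inner estimates I would project \eqref{semilinearpde} against $H_{ij}$ in $L^2_\rho$ using the orthogonality \eqref{otho}. For $i+j = 8$, the linear contribution reduces to $-3\veph_{ij}$ (the eigenvalue $1-(i+j)/2$ of $\mathcal L$, which is the effective linearization of the full nonlinearity $-1/w^2$ at the constant $\bar c$ once the Taylor piece is absorbed into $\mathcal H$), yielding
\begin{equation*}
\veph_{ij}'(s) + 3\,\veph_{ij}(s) = O(e^{-32s/12}).
\end{equation*}
Since $3 > 32/12$, variation of constants gives $|\veph_{ij}(s)| \leq C\,e^{-32s/12}$ with $C$ independent of $A_1$, whence \eqref{in2} is strictly improved for $A_1$ large. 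For $\veph_-$ I test the equation against $\veph_-\rho$ and use the spectral gap \eqref{inq:spec gap} to obtain
\begin{equation*}
\tfrac{1}{2}\tfrac{d}{ds}\|\veph_-\|_{L^2_\rho}^2 \leq -\tfrac{7}{2}\|\veph_-\|_{L^2_\rho}^2 + O(e^{-32s/6}),
\end{equation*}
so that Gronwall integration forces $\|\veph_-(s)\|_{L^2_\rho}\lesssim e^{-32s/12}$, strictly below $A_1 e^{-32s/12}$. The derivative estimates \eqref{in4}, \eqref{in5} follow in exactly the same fashion by testing against $\partial_k\veph_-\rho$ and $\partial^2_{kl}\veph_-\rho$ respectively and invoking the improved gaps \eqref{inq:spec partial i}, \eqref{inq:spec partial ij}; strict improvement requires the hierarchy $A_3 \gg A_2 \gg A_1 \gg 1$ so that lower-order norm contributions coming from commutators with the cutoff $\mchi_K$ and from the potential $2/\mathcal{P}_\theta^3$ are absorbed.

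For the intermediate estimates I would mirror the energy strategy but in the weighted spaces $L^2(|y|^{-\alpha-2}dy)$ and $L^2(|z|^{-\gamma-2}dz)$ restricted to $\{|y|\geq K\}$ and $\{|z|\geq K\}$. Testing the equation for $(y\cdot\nabla)^k\varepsilon$ (resp.~$(z\cdot\nabla)^k\eta$) against the respective weight and using integration by parts yields a coercive contribution with coefficient $-7/2$ (resp.~$-1/2$), the exponents $\alpha = 18$ and $\gamma = 8.01$ having been chosen precisely to achieve these coefficients; the boundary contributions at $|y|\sim K$, $|z|\sim K$ are absorbed by the already-improved inner estimates. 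Gronwall then delivers $\|(y\cdot\nabla)^k\varepsilon\|_\flat\lesssim e^{-32s/12}$ and $\|(z\cdot\nabla)^k\eta\|_\natural\lesssim e^{-5s/12}$, strictly improving \eqref{int1} and \eqref{bootstrap: int 2} provided $B_i \gg A_3$ and $C_i \gg B_3$. For the outer bound \eqref{out2}, comparison is the natural tool: a direct computation shows that $D_1 h(z,s)$ is a strict super-solution of the linear parabolic equation satisfied by $|\eta^{ex}|$, once the error and the nonlinear remainder are controlled pointwise using the bootstrap (which is why the small parameter $\epsilon$ in the definition of $h$ is needed). The multiplicative factor $(1-e^{-\epsilon s})$ guarantees positivity of $D_1 h - \eta^{ex}$ at $s=s_0$ (where $\eta^{ex}(\cdot,s_0) \equiv 0$ by the choice of initial data), while the quadratic vanishing in $h$ at $|z|\sim 1$ and at $|z|=e^{s/4}$ ensures the barrier respects both the inner matching and the Dirichlet boundary condition; hence strict positivity propagates by the maximum principle.

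The principal obstacle is the intermediate-region bookkeeping: one must execute the weighted energy estimates for all of $k=0,1,2$ simultaneously, carefully tracking the commutators between the equation and the powers of $y\cdot\nabla$ (resp. $z\cdot\nabla$), and verifying that the resulting cross terms are either dominated by coercivity or absorbed by the lower-order (already improved) estimates. A secondary technical point is that the linearized potential $2/\mathcal{P}_\theta^3$ degenerates along the cross $\{y_1 y_2=0\}$; the weights $|y|^{-\alpha-2}$ and $|z|^{-\gamma-2}$ with $\alpha=18$, $\gamma=8.01$ are engineered so that the coercive integrals remain positive in spite of this degeneracy. Finally, the exclusion of \eqref{in1} is structural: for each $i+j\le 7$ the eigenvalue $1-(i+j)/2$ strictly exceeds $-32/12$, so the corresponding ODE $\veph_{ij}' = (1-\frac{i+j}{2})\veph_{ij} + O(e^{-32s/12})$ does not generate decay at the required rate. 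These finitely many unstable/slowly-decaying modes can only be controlled by fine-tuning the $\mathbf{c}$-parameter in the initial data from Proposition \ref{proposition3.1}, which is the purpose of the companion topological (Brouwer-type) argument that, together with Proposition \ref{Proposition3.2}, completes the proof of Theorem \ref{Theorem3.1}.
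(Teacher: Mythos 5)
Your overall roadmap is the same as the paper's: Gronwall estimates for the modes $i+j=8$ and for $\veph_-$ and its derivatives (using the spectral gaps), weighted-$L^2$ energy estimates for the intermediate $\flat$- and $\natural$-norms, and a barrier/comparison argument for $\eta^{ex}$, with the initial-data normalization ensuring $\eta^{ex}(\cdot,s_0)\equiv 0$ on the parabolic boundary. The $\flat$-norm coercivity calculation and the comparison argument are essentially what the paper does.

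However, there is a genuine gap in your treatment of the $\natural$-norm. You assert that testing the $z$-equation against the weight $|z|^{-\gamma-2}$ with $\gamma=8.01$ directly produces coercivity $-\tfrac12$, and that $\gamma$ was chosen to achieve precisely this. This is not what happens. In $z$-variables the transport coefficient is $-\tfrac14$, not $-\tfrac12$, so the integration by parts gives $-\tfrac{\gamma}{8}\|\eta\|_\natural^2$; combined with the $+\tfrac13$ from the linear term and the $+\tfrac23$ from $2/\mathcal{P}_\theta^3$, the net damping with $\gamma=8.01$ is only about $-0.01$ (this is exactly what the paper obtains in Lemma \ref{lem:energy estim z 1}). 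With a damping constant that weak, forward Gronwall against the source $e^{-5s/6}$ yields only $e^{-0.01 s}$, which is far from the required $e^{-5s/12}$, so the bootstrap cannot be closed by the direct estimate. To remedy this, the paper proves a Hardy-type interpolation inequality (Lemma \ref{lem:sobolev type}) relating $\|(z\cdot\nabla)^k\eta\|_\natural$ to $\|(z\cdot\nabla)^{k+1}\eta\|_\natural$, and then in Lemma \ref{lem:intEngz2} couples consecutive orders $k$ and $k+1$ (adding the $k$-level energy identity to four times the $(k+1)$-level one) to upgrade the effective damping to $-\tfrac12$. This coupling mechanism is the key technical insight you are missing; without it the intermediate-region argument does not close, and it is not subsumed by your general remark about "executing the weighted estimates for all $k$ simultaneously" or "tracking commutators." A secondary, more minor inaccuracy: the potential $2/\mathcal{P}_\theta^3$ does not degenerate along the cross $\{y_1 y_2=0\}$; on the contrary, it is \emph{maximal} there (bounded above by $2/3$ everywhere), and the choice of weights is not tied to a degeneracy of the potential but to the far-field integrability and to obtaining a usable margin after the Hardy-type upgrade.
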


\begin{proposition}
\label{Proposition3.3}
There are $\mathbf{c} = (c_{ij})_{0 \leq i+j \leq 7}$ with \(|c_{ij}|<1\) such that the solution $\varepsilon(y,s)$ to \eqref{semilinearpde} with the intial data \(\varepsilon_0=\psi_{s_0}[\textbf{c}]\) satisfies the bootstrap estimates (\ref{in1}) for all $s \geq s_0$. 
\end{proposition}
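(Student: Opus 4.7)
The plan is to prove Proposition \ref{Proposition3.3} by a topological shooting argument, in the spirit of Merle--Zaag and the reference \cite{merle2024degenerate} cited in the introduction. By Proposition \ref{Proposition3.2}, for any choice of $\mathbf{c}$ all bootstrap inequalities except (\ref{in1}) remain strict as long as $\varepsilon(\cdot,s) \in S(s)$, so the only way for the solution to leave $S(s)$ is through the finite-dimensional sphere $\sum_{i+j \leq 7} |\veph_{ij}(s)|^2 = A_1^2 e^{-\frac{32}{6}s}$. Projecting \eqref{semilinearpde} onto each Hermite eigenfunction $H_{ij}$ and estimating the contributions of $\veph_-$, $\{\veph_{ij}\}_{i+j = 8}$, the nonlinear term $NL(\varepsilon)$, and the error $E$ via the remaining bootstrap bounds, I would reduce the evolution of the low modes to the ODE system
\begin{align*}
\veph_{ij}'(s) = \Bigl(1 - \tfrac{i+j}{2}\Bigr)\, \veph_{ij}(s) + O\bigl(e^{-\frac{32}{12}s}\bigr), \qquad i+j \leq 7.
\end{align*}
Rescaling $\tilde\veph_{ij}(s) = e^{\frac{32}{12}s}\veph_{ij}(s)$ turns this into $\tilde\veph_{ij}' = \mu_{ij}\tilde\veph_{ij} + G_{ij}(s)$, with $\mu_{ij} = \tfrac{44-6(i+j)}{12}$ \emph{strictly positive} for all $i+j \leq 7$ and $|G_{ij}|\lesssim 1$, while the bootstrap constraint becomes $\sum |\tilde\veph_{ij}|^2 \leq A_1^2$. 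Thus the modes with $i+j \leq 7$ are precisely the outflow directions that cannot be controlled by dissipation, and this is exactly where the parameters $\mathbf{c}$ must be tuned.

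Assume for contradiction that the exit time $s_*(\mathbf{c}) \in (s_0,\infty)$ is finite for every $\mathbf{c}$ in the closed unit ball $\mathbb{B} \subset \mathbb{R}^{\mathcal{N}}$, where $\mathcal{N} = \#\{(i,j) \in \mathbb{N}^2 : i+j \leq 7\} = 36$. Define
\begin{align*}
\Phi : \mathbb{B} \longrightarrow S^{\mathcal{N}-1}, \qquad \Phi(\mathbf{c}) = \frac{e^{\frac{32}{12}s_*(\mathbf{c})}}{A_1}\bigl(\veph_{ij}(s_*(\mathbf{c}))\bigr)_{i+j \leq 7}.
\end{align*}
Two properties of $\Phi$ must be established. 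First, $\Phi$ is continuous, by continuous dependence of the local Cauchy solution on initial data combined with \emph{outward transversality} at $s_*$: differentiating $\sum |\tilde\veph_{ij}|^2$ along the rescaled flow and using $\min \mu_{ij} = \tfrac{1}{6}$,
\begin{align*}
\frac{d}{ds}\sum_{i+j \leq 7}|\tilde\veph_{ij}|^2\bigg|_{s_*} \geq 2\min_{i+j \leq 7}\mu_{ij}\cdot A_1^2 - C A_1 \geq \tfrac{1}{3}A_1^2 - CA_1 > 0,
\end{align*}
once $A_1 \gg 1$, ensuring transversal exit and hence continuity of $s_*(\cdot)$. Second, $\Phi|_{\partial\mathbb{B}}$ is homotopic to the identity: the initial-data computation in the proof of Proposition \ref{proposition3.1} gives $\veph_{ij}(s_0) = c_{ij}e^{-\frac{32}{12}s_0}\bigl(1+O(e^{-100 s_0})\bigr)$, i.e. $\tilde\veph_{ij}(s_0) = c_{ij}+O(e^{-100 s_0})$; for $|\mathbf{c}|=1$ the trajectory therefore starts $O(e^{-100 s_0})$-close to the exit sphere and, by the transversality just shown, exits immediately at $s_*=s_0$ with $\Phi(\mathbf{c}) = \mathbf{c}+O(e^{-\kappa s_0})$ for some $\kappa > 0$ and $s_0$ large enough. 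These two properties together force $\Phi$ to be a continuous retraction $\mathbb{B} \to \partial\mathbb{B} = S^{\mathcal{N}-1}$, contradicting Brouwer's no-retraction theorem. Hence there must exist $\mathbf{c} \in \mathbb{B}$ with $s_*(\mathbf{c}) = +\infty$; the strict bound $|c_{ij}| < 1$ follows because any such $\mathbf{c}$ on $\partial\mathbb{B}$ would have been forced to exit immediately.

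The main obstacle I anticipate is the ODE reduction step itself. The definition $\veph = \varepsilon \cdot \mchi_K(z)$ involves a cutoff that does not commute with $\mathcal{L}$, so projecting onto $H_{ij}$ produces commutator terms, which must be shown to be exponentially negligible thanks to the Gaussian weight $\rho$ and $Ke^{s/4}\to\infty$. Even more delicate is verifying that the error $E$ and the nonlinear interactions contribute only $O(e^{-\frac{32}{12}s})$ in each projection; the construction of the profile $\mathcal{P}_\theta$ in Section \ref{section2}, with the correction $\mathcal{C}(y,s)$, was engineered precisely for this purpose, but this must be made rigorous uniformly on the whole interval $[s_0,s_*(\mathbf{c})]$ using the intermediate and outer bootstrap bounds. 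Once the ODE reduction and the transversality on the exit sphere are in place, the topological conclusion via Brouwer is standard.
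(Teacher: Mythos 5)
Your proposal follows the same approach as the paper: project \eqref{semilinearpde} onto the low Hermite modes to obtain ODEs, establish outward transversality at the exit time (the paper's Lemma~\ref{Lemma5.4}), and conclude via Brouwer. That much matches, and your remarks about the cutoff commutators and the role of the correction $\mathcal{C}(y,s)$ correctly anticipate where the work lies.

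There is, however, a genuine gap in your treatment of $\Phi|_{\partial\mathbb{B}}$. You assert that for $|\mathbf{c}|=1$ the rescaled modes $e^{\frac{32}{12}s_0}\veph_{ij}(s_0)=c_{ij}+O(e^{-100s_0})$ ``start $O(e^{-100s_0})$-close to the exit sphere'' and therefore exit immediately with $\Phi(\mathbf{c})=\mathbf{c}+O(e^{-\kappa s_0})$. This is false. In the rescaled coordinates the exit sphere is $\left\{\sum_{i+j\leq 7}\big|e^{\frac{32}{12}s}\veph_{ij}(s)\big|^2=A_1^2\right\}$, of radius $A_1$, and $A_1\gg 1$ by the bootstrap constant ordering after Definition~\ref{Definition 3,1}. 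For $|\mathbf{c}|=1$ the initial data has rescaled norm $\approx 1\ll A_1$, i.e.~it lies deep in the interior of the bootstrap ball; this is exactly why Proposition~\ref{proposition3.1} can conclude $\big(\sum_{i+j\leq 7}|\veph_{ij}(s_0)|^2\big)^{1/2}\ll A_1 e^{-\frac{32}{12}s_0}$ with \emph{strict} inequality whenever $|c_{ij}|<1$. Hence $s_*(\mathbf{c})>s_0$ for every $\mathbf{c}\in\bar{\mathbb{B}}$ and the claimed near-identity behaviour of $\Phi$ on $\partial\mathbb{B}$ does not follow. (The paper's own line ``$f$ is the identity on $\partial\bar{\mathcal{U}}$'' rests on the same scale confusion, so the gap is inherited from the source, but your version states the false step explicitly.) What is actually needed is that $\Phi|_{\partial\mathbb{B}}$ has nonzero topological degree: since all rates $\mu_{ij}=\frac{44-6(i+j)}{12}\in\left[\frac{1}{6},\frac{11}{3}\right]$ are strictly positive, the unperturbed linear flow carries $S^{\mathcal{N}-1}$ onto the sphere of radius $A_1$ by a monotone reparameterization of degree one, and the $O(e^{-\frac{32}{12}s_0})$ forcing accumulated over the exit interval (of length $\lesssim 6\log A_1$) is too small to change the degree; alternatively, re-parameterize $\mathbf{c}$ so that $|\mathbf{c}|=1$ corresponds to initial data saturating the rescaled bootstrap ball of radius $A_1$, at the cost of re-examining Proposition~\ref{proposition3.1}. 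Either fix replaces the incorrect ``exits immediately'' step.
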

    
The proof of Proposition \ref{Proposition3.2} is technical and lengthy, which is postponed later. We first assume Proposition \ref{Proposition3.2} and proceed with the proof of Proposition \ref{Proposition3.3}.\\

\paragraph{\textbf{Proof of Proposition \ref{Proposition3.3}:}} We proceed by contradiction. Suppose there exists \(\bar{s} \gg s_0\) such that for every solution \(\varepsilon(y,s)\) corresponding to initial data \(\psi_{s_0}[\mathbf{c}]\), with \(|c_{ij}| < 1\), the bootstrap condition \eqref{in1} hold with equality at \(s=\bar s\). In this case, we refer to the time \(\bar{s}\) as the exit time. By continuity, it follows that
\begin{align}
    \left( \sum_{i+j \leq 7} |\veph_{ij}(\bar{s})|^2 \right)^{\frac{1}{2}} = A_1 e^{-\frac{32}{12}\bar{s}}, \label{contrahypo}
\end{align}
moreover, we also have the strictly outgoing property:
\begin{align}
    \frac{1}{2} \frac{d}{ds} \left( \sum_{i+j \leq 7} \left| \frac{e^{\frac{32}{12}\bar{s}}}{A_1} \veph_{ij}(\bar{s}) \right|^2 \right) > 0, \label{strictly}
\end{align}
whose proof is referred to the end of Subsection~\ref{Control of veph in K}.
Let \(\mathcal{U}\) be the open unit disc, and define the map \(\Theta : D(\Theta) \subset \mathcal{U} \to \partial \mathcal{U}\) by
\[
\frac{e^{\frac{32}{12}s_0}}{A_1}\big(\veph_{ij}(s_0)\big)_{0 \leq i+j \leq 7} \mapsto \frac{e^{\frac{32}{12}\bar{s}}}{A_1}\big(\veph_{ij}(\bar{s})\big)_{0 \leq i + j \leq 7},
\]
where the domain \(D(\Theta)\) is defined as
\[
D(\Theta) = \left\{ \frac{e^{\frac{32}{12}s_0}}{A_1}\big(\veph_{ij}(s_0)\big)_{0 \leq i+j \leq 7} \in \mathcal{U} : \text{the exit time } \bar{s} < \infty \right\}.
\]
Note that we may choose $\mathbf{c} = (c_{ij})_{0 \leq i+j \leq 7}$ such that \(\frac{e^{\frac{32}{12}s_0}}{A_1}(\veph_{ij}(s_0))_{0 \leq i+j \leq 7} \in \partial \mathcal{U} \). By the strictly outgoing property, \(\sum_{0 \leq i+j \leq 7}\frac{e^{\frac{32}{12}s}}{A_1}|\veph_{ij}(s)|^2>1\) for \(s>s_0\), thus \(\bar s=s_0\), which implies that \(\Theta\) is the identity on \(\partial \mathcal{ U}\). Next, since the unstable modes \(\veph_{ij}(s)\) depends continuously on the parameter \(\textbf{c}\) and the vector field \(\frac{e^{\frac{32}{12}s}}{A_1}(\veph_{ij}(s))_{0 \leq i+j \leq 7}\) satisfies the strictly outgoing property, \(D(\theta)\) is open and \(\Theta\) is continuous on \( D(\Theta)\). Under the contradiction assumption, combinded with the strictly outgoing property, every initial datum admits a finite exit time, hence \(D(\Theta)=\mathcal{U}\), and thus \(\Theta\) defines a continuous map from \(\bar{\mathcal{U}}\) to \( \bar{\partial\mathcal{U}}\) which is the identity on \( \partial\mathcal{U}\). This contradicts the Brouwer fixed-point theorem.
\qed

\vspace{10pt}

\paragraph{\textbf{Proof of Theorem \ref{Theorem 1.1}:}} Note that Proposition \ref{Proposition3.2} together with Proposition \ref{Proposition3.3} directly yields Theorem \ref{Theorem3.1}. Let \(u\)
denote the solution constructed in Theorem \ref{Theorem3.1}. Since properties (i), (ii), and (iii) of Theorem \ref{Theorem 1.1} follow immediately from Theorem \ref{Theorem3.1}, it remains to establish property (iv). The argument used by Merle in \cite{merle1992solution} can be adapted to this case to get the existence of a function \(u^*\in C(\mathbb{R}^2\setminus \{0\})\) such that \(u \to u^*\) uniformly on compact subsets of \(\mathbb{R}^2\setminus \{0\}\) as \(t \to T\). We now determine the asymptotic behavior of \(u^*\) as \(x \to 
0\). Given \(K'>0\), by property (i), we have
\begin{align*}
    u(e^{-\frac{s}{2}}y,T-e^{-s}) \to e^{-\frac{s}{3}}\left(3+e^{-s}y_1^2y_2^2+\theta e^{-2s}(y_1^6+y_2^6)\right
    )^{\frac{1}{3}},
\end{align*}
uniformly on \(|y| \leq K'\) as \(s \to \infty\). Hence, given \(\epsilon>0\), there is \(s_{\epsilon} \gg 1\) such that for all \(s \geq s_{\epsilon}\),
\begin{align*}
    \left|u(e^{-\frac{s}{2}}y,T-e^{-s})-e^{-\frac{s}{3}}\left(3+e^{-s}y_1^2y_2^2+\theta e^{-2s}(y_1^6+y_2^6)\right)^{\frac{1}{3}}\right|<\epsilon,
\end{align*}
for all \(|y| \leq K'\). Then for \(|x|\) small, \(x \neq 0\), we have, in the \(x\)-variables
\begin{align*}
    \left|u^{*}(x)-\left(x_1^2x_2^2+\theta(x_1^6+x_2^6)\right)^{\frac{1}{3}}\right| \leq \epsilon,
\end{align*}
showing the asymptotic behavior 
\begin{align*}
     u^{*}(x)\sim\left(x_1^2x_2^2+\theta(x_1^6+x_2^6)\right)^{\frac{1}{3}} \quad \text{ as } \quad |x|  \to  0. 
\end{align*}
This completes the proof of Theorem \ref{Theorem 1.1}, assuming Proposition \ref{Proposition3.2}, whose proof is given in section \ref{section5}. 
\qed

 \section{Properties of Solutions in \(S(s)\),  and the Decomposition of the Generated Error \(E\)}
 \label{section4}
 
The success of the bootstrap argument depends on precise estimates of the error term \ref{E}, the nonlinear term \ref{NL}, and the behavior of the solution within the bootstrap regime. In this section, we establish these estimates, focusing on the properties of solutions in the set \(S(s)\) and the detailed structure of the generated error term \(E\) and the nonlinear term \(NL(\varepsilon)\).

\subsection{Properties of \(S(s)\)}
\begin{lemma}
\label{Lemma4.1}
    Let \(s_0 = s_0(A_1, A_2,A_3,K,B_1,B_2,B_3,C_1,C_2,C_3,D_1) \gg 1\), let \(\varepsilon(s) \in S(s)\) be a solution for \(s \in [s_0, s_1]\), with fixed \(s_1 > s_0\). Then we have the following estimates:\\
(i) \textup{(Local \(L^{\infty}\)-estimate for \(\veph_-\).) }
    \begin{equation*}
        \|\veph_-(s)\|_{L^{\infty}(|y| \leq 100K)} \lesssim C(K) \, e^{-\frac{32}{12}s},
    \end{equation*}
    where \(C(K)>0\) is a constant depending only on \(K\) and \(\veph_-\) is defined in \ref{decomoin}.\\
(ii) \textup{(Pointwise estimates for \(\varepsilon\) and \(\eta\)}.) 
We have the pointwise estimates
\begin{align*}
    |\varepsilon(y,s)| \lesssim e^{-\frac{32}{12}s}|y|^9, \text{ for } |y| \geq 2K, \quad |\eta(z,s)| \lesssim e^{-\frac{5}{12}s}|z|^{4.005}, \text{ for } |z| \geq 2K.
\end{align*}
(iii) \textup{(Nonlinear estimate for \(\veph\) in \(L^2_{\rho}(\mathbb{R}^2)\).)} 
The nonlinear term corresponding to \(\veph\) is defined as
\begin{align*}
        NL(\veph)=\sum_{k=2}^{\infty}\binom{-2}{k}\frac{\veph^k}{\mathcal{P}_{\theta}^{k+2}},
    \end{align*}
    where 
    \begin{align*} \binom{-2}{k}=\frac{(-2) \cdot (-3) \cdots(-2-k+1)}{k!}, \quad k \geq 2,
    \end{align*}
    and satisfies the estimate
    \begin{align*}
        NL(\veph)=o(se^{-3s}), \quad \text{ in } L^2_{\rho}(\mathbb{R}^2).
\end{align*}
(iv) \textup{(Exterior nonlinear estimate)}
The exterior nonlinear term is defined as
\begin{align*}
    NL^{ex}(\eta)=NL(\eta)\left(1-\mchi_{K}(ze^{-\frac{5}{49}s})\right),
\end{align*}
where
\begin{align*}
    NL(\eta):=\left(\frac{1}{\mathcal{P}_{\theta}^2}-\frac{1}{(\mathcal{P}_{\theta}+\eta)^2}\right),
\end{align*}
and satisfies the estimate
\begin{align*}
    |NL^{ex}(\eta)| \lesssim e^{-\frac{11}{147}s}
\end{align*}
\end{lemma}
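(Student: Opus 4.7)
The plan is to dispatch the four parts in order; both (ii) and (iv) lean on the pointwise decay from (iii), which is the delicate step. For \emph{Part (i)}, on the compact ball \(\{|y|\le 100K\}\) the Gaussian weight \(\rho\) is bounded below by a constant \(c(K)>0\), so the bootstrap bounds \eqref{in3}--\eqref{in5} give unweighted \(H^2\) control of \(\veph_-\) on this ball with norm \(\lesssim C(K)\,e^{-\frac{32}{12}s}\), and the two-dimensional Sobolev embedding \(H^2\hookrightarrow L^\infty\) yields the pointwise bound. For \emph{Part (ii)}, on \(\operatorname{supp}\veph\subset\{|z|\le 2K\}\) the profile \(\mathcal{P}_\theta=\Psi_\theta+\mathcal{C}\mchi_K\) is uniformly bounded below (since \(\Psi_\theta\ge\bar c\) and \(\mathcal{C}\mchi_K=O(se^{-s})\)), so the binomial series is absolutely convergent and the estimate reduces to bounding \(\|\veph^k\|_{L^2_\rho}\) for \(k\ge 2\). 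I split the integral at \(|y|=100K\): on the inner part, (i) combined with \eqref{in1}--\eqref{in2} gives \(|\veph|\lesssim C(K)\,e^{-\frac{32}{12}s}\); on the outer part, the pointwise estimate from (iii) gives \(|\veph|\lesssim e^{-\frac{32}{12}s}|y|^9\), whose polynomial powers are absorbed by the Gaussian weight. Thus \(\|\veph^k\|_{L^2_\rho}\lesssim e^{-\frac{32k}{12}s}\); for the dominant \(k=2\) this is \(e^{-\frac{16}{3}s}=o(se^{-3s})\) since \(16/3>3\).

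For \emph{Part (iii)}, fix \(|y_0|=R\ge 2K\) and rescale \(\widetilde\varepsilon(\xi)=\varepsilon(y_0+R\xi)\) on \(\xi\in B_{1/4}\). Since \(|y|\asymp R\) on \(B(y_0,R/4)\subset\{|y|\ge K\}\),
\[
\int_{B_{1/4}}|\widetilde\varepsilon|^2\,d\xi \;\lesssim\; R^{-2}\!\!\int_{B(y_0,R/4)}\!\!|\varepsilon|^2\,dy \;\lesssim\; R^{18}\|\varepsilon\|_\flat^2 \;\lesssim\; R^{18}B_1^2 e^{-\frac{32}{6}s}.
\]
The bootstrap supplies only the radial derivatives \((y\cdot\nabla)^k\varepsilon=(r\partial_r)^k\varepsilon\), which on \(B(y_0,R/4)\) mix the center-direction derivative \(\hat y_0\cdot\nabla_\xi\widetilde\varepsilon\) with \(\xi\cdot\nabla_\xi\widetilde\varepsilon\), so extracting genuine \(H^2(B_{1/4})\) control requires recovering angular regularity. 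I would do this by reading \eqref{semilinearpde} as the elliptic identity \(\Delta\varepsilon=\partial_s\varepsilon+\tfrac12 y\cdot\nabla\varepsilon-\tfrac13\varepsilon-E-NL(\varepsilon)\) on \(B(y_0,R/4)\), applying interior elliptic/parabolic regularity, and bounding \(\partial_s\varepsilon\) by differentiating \eqref{semilinearpde} in \(s\) and rerunning a weighted energy estimate. The 2D Sobolev embedding \(H^2(B_{1/4})\hookrightarrow L^\infty(B_{1/4})\) then produces \(|\varepsilon(y_0)|\lesssim R^9 e^{-\frac{32}{12}s}\). The bound on \(\eta\) is handled identically with \(\|\cdot\|_\natural\), \((z\cdot\nabla)^k\eta\), and exponent \(\gamma/2=4.005\) replacing \(\alpha/2=9\). \emph{The main obstacle} of the lemma is exactly this angular-regularity/elliptic-upgrade step in the weighted \(\flat\) and \(\natural\) spaces in the presence of the Ornstein--Uhlenbeck drift.

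For \emph{Part (iv)}, write \(NL(\eta)=(2\mathcal{P}_\theta\eta+\eta^2)/[\mathcal{P}_\theta^2(\mathcal{P}_\theta+\eta)^2]\); the smallness \(|\eta^{ex}/\mathcal{P}_\theta|\ll 1\) built into \eqref{out2} via the small parameter \(\epsilon\) yields \(|NL^{ex}(\eta)|\lesssim|\eta^{ex}|/\mathcal{P}_\theta^3\). On the support \(\{|z|\ge Ke^{5s/49}\}\) of \(NL^{ex}\), the on-axis direction is the worst case and
\[
\mathcal{P}_\theta^3 \;\gtrsim\; 3+z_1^2z_2^2+\theta e^{-s/2}(z_1^6+z_2^6) \;\gtrsim\; \theta e^{-s/2}|z|^6.
\]
Combining with \(|\eta^{ex}|\lesssim e^{-s/6}|z|^2\) from \eqref{out2} yields
\[
|NL^{ex}(\eta)| \;\lesssim\; \theta^{-1}e^{s/3}|z|^{-4} \;\lesssim\; K^{-4}e^{s/3-20s/49} \;=\; K^{-4}e^{-\frac{11}{147}s},
\]
since \(\tfrac13-\tfrac{20}{49}=\tfrac{49-60}{147}=-\tfrac{11}{147}\); the exponent \(5/49\) in the cutoff is chosen precisely to produce this exponent. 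Higher-order terms in the series carry an additional factor \(|\eta/\mathcal{P}|\ll 1\) and are absorbed.
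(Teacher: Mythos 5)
Your proposals for parts (i), (ii), and (iv) match the paper's proof in structure and conclusion: (i) is the same unweighted-on-a-ball $H^2\hookrightarrow L^\infty$ argument; (ii) is the same split into $\{|y|\le 100K\}$ and the outer tail, using the pointwise bounds and the geometric series in $k$; and (iv) gives essentially the same bound, though you estimate $|NL^{ex}|\lesssim|\eta^{ex}|/\mathcal{P}_\theta^3$ while the paper uses the cruder $|NL^{ex}|\lesssim 1/\mathcal{P}_\theta^2$ — the two coincide on $\{|z|\ge Ke^{5s/49}\}$ because $|\eta^{ex}|/\mathcal{P}_\theta$ is $O(1)$ there, but yours is marginally sharper and the arithmetic $1/3-20/49=-11/147$ checks out.

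The interesting divergence is in (iii), where the paper's entire proof is the sentence ``this follows directly from the bootstrap conditions \eqref{int1}, \eqref{bootstrap: int 2} combined with Sobolev inequality.'' You are right to be uncomfortable with that: the $\flat$- and $\natural$-norms control only $(y\cdot\nabla)^k\varepsilon$, $k=0,1,2$, i.e.\ purely radial derivatives, and control of radial derivatives alone does \emph{not} yield pointwise bounds in two dimensions — a function of the angular variable alone (independent of $r$) has all its $(y\cdot\nabla)^k$ derivatives vanish for $k\ge1$ while its $L^\infty$ norm can be arbitrary relative to its $L^2$ norm on an annulus. So the obstacle you identify (recovering angular regularity) is real and the paper's invocation of ``Sobolev'' sweeps it under the rug. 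Your proposed remedy, using the equation to upgrade regularity, is the right idea in spirit; however, the specific route you sketch — treating \eqref{semilinearpde} as an elliptic identity and separately bounding $\partial_s\varepsilon$ by differentiating the equation in $s$ and rerunning a weighted energy estimate — is needlessly heavy. Introducing an extra bootstrap quantity for $\partial_s\varepsilon$ is exactly what interior parabolic regularity on space-time cylinders is designed to avoid: a local $L^2\to L^\infty$ estimate of De Giorgi--Nash--Moser or Krylov--Safonov type, applied on the rescaled cylinder $Q_{1/2}$ around $(\bar y,\bar s)$ with $|\bar y|=R$, bounds $|\varepsilon(\bar y,\bar s)|$ in terms of the $L^2$ space-time norm on $Q_1$ and the $L^p$ norm of the forcing, and the annular $L^2$ bounds you computed ($\int_{B_{1/4}}|\widetilde\varepsilon|^2\lesssim R^{18}e^{-32s/6}$) feed in directly. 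Note also that you cannot invoke Theorem~\ref{thm:reg main} here, since that theorem's proof already cites Lemma~\ref{Lemma4.1}(iii); a self-contained proof of (iii) must use only the weaker local-boundedness estimate, after which Theorem~\ref{thm:reg main} (Schauder) upgrades to the pointwise derivative bounds in Remarks~\ref{rmk: L2 rho control outer}--\ref{rmk:parabolic z variables}.
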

\begin{proof}(i) Note that by Sobolev inequality, we have
    \begin{align*}
      \|\veph_-(s)\|^2_{L^{\infty}(|y| \leq 100K)} \leq C\left(\|D^2 \veph_-(s)\|^2_{L^2(|y| \leq 50K)}+\|D \veph_-(s)\|^2_{L^2(|y| \leq 50K)}+\|\veph_-(s)\|^2_{L^2(|y| \leq 50K)}\right),
    \end{align*}
    where \(C>0\) is the Sobolev constant. By the bootstrap condition (\ref{in3}), we have:
   \begin{align*}
    \|\veph_-(s)\|_{L^2(|y| \leq 50K)}& \lesssim C(K) \|\veph_-\|_{L^2_{\rho}(\mathbb{R}^2)} \lesssim C(K)\,e^{-\frac{32}{12}s}.
   \end{align*}
Next, by the bootstrap condition \ref{in4} and \ref{in5}, we have
   \begin{align*}
      &\|D \veph_-(s)\|_{L^2(|y| \leq 50K)} \lesssim C(K)\,e^{-\frac{32}{12}s}, \quad \|D^2 \veph_-(s)\|_{L^2(|y| \leq 50K)} \lesssim C(K)\,e^{-\frac{32}{12}s},
   \end{align*}
hence, we have
   \begin{align*}
      \|\veph_-(s)\|_{L^{\infty}(|y| \leq 100 K)} \lesssim  C(K)\,e^{-\frac{32}{12}s}.
   \end{align*} 
This concludes the local \(L^{\infty}\) estimate of \(\veph_-\).\\
 (ii) This follows directly from bootstrap conditions \ref{int1} and \ref{bootstrap: int 2} combined with Sobolev inequality.\\
(iii)
Note that by (ii), we have
\begin{align}
\label{inq:ptwise1}
    |\veph(y,s)| \leq M e^{-\frac{32}{12}s}|y|^9, \quad |y| \geq 2K,
\end{align}
moreover, by the local \(L^{\infty}\) estimate for \(\veph_-\), together with the bootstrap condition \ref{in1} and \ref{in2}, we have
\begin{align}
\label{ptwise local veph 0}
    \|\veph(y,s)\|_{L^{\infty}(|y|\leq 50K)} \leq Me^{-\frac{32}{12}s}.
\end{align}
for some constant \(M=M(A_1,A_2,A_3,K,B_1,B_2,B_3)>0\). Thus, we have, for \(k \geq 2\),
    \begin{align*}
        \|-(\veph^k/\mathcal{P}_{\theta}^{k+2})\|_{L^2_{\rho}(\mathbb{R}^2)}& \leq \|\veph^k\|_{L^2_{\rho}(\mathbb{R}^2)}\\
        &=\left(\int_{|y| \leq 2K}|\veph|^{2k}\rho(y)\,dy+\int_{|y| \geq 2K}|\veph|^{2k}\rho(y)\,dy\right)^{1/2} \leq 2M^k\,e^{-\frac{32}{12}ks},
    \end{align*}
    and hence
    \begin{align*}
        \|NL(\veph)\|_{L^2_{\rho}(\mathbb{R}^2)} &\leq \sum_{k=2}^{\infty}(k+1) \|\veph^k\|_{L^2_{\rho}(\mathbb{R}^2)} \leq\sum_{k=2}^{\infty}(k+1)(2M^ke^{-\frac{32}{12}ks})\ll se^{-3s}.
    \end{align*}
    This proves the nonlinear estimate for \(\veph\) in \(L^2_{\rho}(\mathbb{R}^2)\).\\
   (iv) Note that 
    \begin{align*}
        |NL^{ex}(\eta)| \lesssim\left( \frac{1}{\mathcal{P}_{\theta}^2}+\frac{1}{(\mathcal{P}_{\theta}-|\eta|)^2}\right)\left(1-\mchi_{K}(ze^{-\frac{5}{49}s})\right)\lesssim \frac{1}{\mathcal{P}_{\theta}^2}\left(1-\mchi_{K}(ze^{-\frac{5}{49}s})\right) \lesssim e^{-\frac{11}{147}s},
    \end{align*}
    where we used the fact that \(|\eta^{ex}(z,s)/\mathcal{P}_{\theta}(z,s)| \ll 1\). This concludes the exterior nonlinear estimate.
\end{proof}

\begin{remark}
\label{rmk:veph^2 estimate}
    The case when \(k=2\) in the proof of (iii) in Lemma \ref{Lemma4.1} shows that \(\|\veph^2\|_{L^2_{\rho}(\mathbb{R}^2)}=o(se^{-3s})\).
\end{remark}

\medskip

\subsection{Decomposition of the generated error \(E\)}
\begin{lemma}
\label{Lemma4.2}
Let \(s_0 = s_0(A_1, A_2,A_3,K,B_1,B_2,B_3,C_1,C_2,C_3,D_1) \gg 1\), let \(\varepsilon(s) \in S(s)\) be a solution for \ref{semilinearpde} for all \(s \in [s_0,s_1]\) with \(s_1<\infty\). Then the generated error \(E\) defined in \ref{E} satisfies\\

(i) \(|E(y,s)| \lesssim se^{-3s}|y|^{10}\) for \(|y| \leq 2Ke^{s/4}\).\\

(ii) \(|\mathcal{E}(z,s)| \lesssim \min{\{e^{-s/3},e^{-s/2}|z|^2\}} \) for \(|z|  \geq 2K\), where \(\mathcal{E}(z,s)=E(y,s)\).
\end{lemma}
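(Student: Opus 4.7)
In the region $|y| \leq 2Ke^{s/4}$, the cutoff $\chi_{\kappa/4}(z)$ is identically $1$, so $\mathcal{P}_\theta = \Psi_\theta(y,s) + \mathcal{C}(y,s)\chi_K(z)$. My plan is to Taylor-expand, at scale $|y| \lesssim e^{s/4}$,
\[
\Psi_\theta(y,s) = \bar{c} + \tfrac{\bar c}{9}e^{-s}y_1^2 y_2^2 + \tfrac{\bar c \theta}{9}e^{-2s}(y_1^6+y_2^6) - \tfrac{\bar c}{81}e^{-2s}y_1^4 y_2^4 + O(e^{-3s}|y|^{10}),
\]
rewrite the monomials in the Hermite basis $\{H_{ij}\}$, and compare coefficient-by-coefficient with \eqref{corr}. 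By construction the coefficients appearing in $\mathcal{C}$ are precisely those needed to turn $\Psi_\theta+\mathcal{C}$ into the formal solution $\bar c + \frac{\bar c}{9}e^{-s}H_{22} + \Sigma$ obtained in Lemma~\ref{Lemma 2.1}, up to the pointwise Taylor remainder $O(e^{-3s}|y|^{10})$. Since $\Sigma$ solves \eqref{selfsimpde} with a residual of order $se^{-3s}$ times polynomials whose leading degree is $10$ in $y$ (the factor $s$ originating from the secular term $-\frac{8s}{27\bar c}(h_2h_4+h_4h_2)$ when differentiated), substitution into \eqref{E} yields the claimed bound $|E|\lesssim se^{-3s}|y|^{10}$. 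On the annulus $Ke^{s/4} \leq |y| \leq 2Ke^{s/4}$ where $\chi_K(z)$ varies, the extra contributions carry the factors $\nabla_y\chi_K = e^{-s/4}\nabla_z\chi_K$ and $\Delta_y\chi_K = e^{-s/2}\Delta_z\chi_K$; combined with the pointwise size $|\mathcal{C}| \lesssim se^{-s/2}$ on this scale, they are absorbed into the target bound.

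\textbf{Part (ii).} I pass to the $z$-variable by setting $\mathcal{Q}(z,s) = \mathcal{P}_\theta(ze^{s/4},s)$; the chain rule $\partial_s \mathcal{P}_\theta\big|_y = \partial_s \mathcal{Q}\big|_z - \tfrac{1}{4} z \cdot \nabla_z \mathcal{Q}$ together with $\Delta_y = e^{-s/2}\Delta_z$ yields
\[
\mathcal{E}(z,s) = -\partial_s \mathcal{Q} - \tfrac{1}{4}z \cdot \nabla_z \mathcal{Q} + e^{-s/2}\Delta_z \mathcal{Q} + \tfrac{1}{3}\mathcal{Q} - \mathcal{Q}^{-2},
\]
and on $|z| \geq 2K$ one has $\mathcal{Q} = \Psi_\theta\chi_{\kappa/4} + e^{s/3}(1-\chi_{\kappa/4})$. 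The analysis hinges on two algebraic identities, both checkable directly. First, with $F = 3 + z_1^2 z_2^2 + \theta e^{-s/2}(z_1^6+z_2^6)$, both $\partial_s \Psi_\theta$ and $-\frac{1}{4}z\cdot\nabla_z\Psi_\theta + \frac{1}{3}\Psi_\theta - \Psi_\theta^{-2}$ equal $-\frac{\theta}{6}e^{-s/2}F^{-2/3}(z_1^6+z_2^6)$, so $\Psi_\theta$ is an exact solution of the reduced ODE \eqref{eq:vzs}. Second, since $\chi_{\kappa/4}(z) = \chi(4|z|e^{-s/4})$ depends on $(z,s)$ only through $|z|/\kappa$, the self-similar transport identity $-\partial_s \chi_{\kappa/4} - \tfrac{1}{4}z\cdot\nabla_z\chi_{\kappa/4} = 0$ holds. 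Combining these collapses the error to
\[
\mathcal{E}(z,s) = \frac{\chi_{\kappa/4}}{\Psi_\theta^2} - \frac{1}{\mathcal{Q}^2} + e^{-s/2}\Delta_z\mathcal{Q}.
\]
I then split into three subregions. On $|z|\in[2K,\kappa/4]$, $\chi_{\kappa/4}=1$ and $\mathcal{Q}=\Psi_\theta$, so $\mathcal{E} = e^{-s/2}\Delta_z\Psi_\theta$; estimating $\Delta_z\Psi_\theta$ through $\tfrac{1}{3}F^{-2/3}\Delta_z F - \tfrac{2}{9}F^{-5/3}|\nabla_z F|^2$ and distinguishing the regimes $|z| \leq e^{s/12}$ and $|z|\geq e^{s/12}$ (where $F$ is dominated by the constant $3$ vs. $\theta e^{-s/2}|z|^6$) yields $|\Delta_z\Psi_\theta|\lesssim \min\{|z|^2, e^{s/6}\}$. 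On the transition layer $|z|\in[\kappa/4,\kappa/2]$, the sizes $\Psi_\theta \sim e^{s/3}$, $|\nabla_z\Psi_\theta|\sim e^{s/12}$, $|\nabla_z\chi_{\kappa/4}|\sim e^{-s/4}$, $|\Delta_z\chi_{\kappa/4}|\sim e^{-s/2}$ force every term in $\mathcal{E}$ to be $\lesssim e^{-2s/3}$. Finally, on $|z|\geq \kappa/2$, $\mathcal{Q}\equiv e^{s/3}$ gives $\mathcal{E}=-e^{-2s/3}$. In all three subregions the bound $|\mathcal{E}|\lesssim \min\{e^{-s/3},e^{-s/2}|z|^2\}$ then follows using $|z|\geq 2K \geq 1$.

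\textbf{Main obstacle.} The most delicate part is the bookkeeping in (i): one must verify that the explicit products $h_ih_j$ in \eqref{corr}, including the secular $s$-factor, simultaneously (a) match the Hermite-series expansion of $\Sigma$ produced by Lemma~\ref{Lemma 2.1} after the monomials $y_1^{2k}y_2^{2\ell}$ of $\Psi_\theta$ are rewritten in the $H_{ij}$ basis, and (b) preserve the pointwise remainder structure $O(se^{-3s}|y|^{10})$ when the whole expansion is fed back into the nonlinear term $\mathcal{P}_\theta^{-2}$. Part (ii), by contrast, is comparatively clean once the two algebraic cancellations (the exact-solution property of $\Psi_\theta$ for \eqref{eq:vzs} and the self-similar cutoff identity) are isolated, reducing the estimate to a scaling case analysis in three explicit subregions.
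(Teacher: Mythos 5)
Your proposal is correct and follows essentially the same route as the paper: in part (i), Taylor-expand $\Psi_\theta$, compare with the Hermite expansion that produced $\mathcal{C}$, and estimate the remainder; in part (ii), pass to $z$-variables, exploit the exact-solution property of $\Psi_\theta$ for the reduced equation \eqref{eq:vzs}, and split into the three annuli $|z|\in[2K,\kappa/4]$, $[\kappa/4,\kappa/2]$, $[\kappa/2,\kappa]$. The cutoff transport identity $\partial_s\mchi_{\kappa/4} + \tfrac14 z\cdot\nabla_z\mchi_{\kappa/4} = 0$ that you isolate (and which does hold, since $\mchi_{\kappa/4}$ depends on $(z,s)$ only through $|z|e^{-s/4}$) makes the collapse $\mathcal{E} = \mchi_{\kappa/4}\Psi_\theta^{-2} - \mathcal{Q}^{-2} + e^{-s/2}\Delta_z\mathcal{Q}$ explicit; the paper obtains the same cancellations implicitly by ``direct calculation.'' Two minor remarks: your stated dichotomy ``$F$ dominated by $3$ for $|z|\leq e^{s/12}$, by $\theta e^{-s/2}|z|^6$ otherwise'' is not literally accurate since the dominant term in $F$ depends on whether $z_1$ and $z_2$ are comparable, not on $|z|$ alone; nevertheless the bound $|\Delta_z\Psi_\theta|\lesssim\min\{|z|^2,e^{s/6}\}$ survives a more careful case split and matches the paper's three-term estimate. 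Your treatment of the annulus $Ke^{s/4}\leq|y|\leq 2Ke^{s/4}$ where $\mchi_K(z)$ varies in part (i) is, if anything, more careful than the paper's presentation, which restricts attention to $|y|\leq e^{s/4}$.
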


\begin{proof}
(i) Recall from (\ref{E}) that
\begin{equation*}
    E(y,s)=-\partial_s\mathcal{P}_{\theta}+\Delta \mathcal{P}_{\theta}-\frac{1}{2}y \cdot \nabla \mathcal{P}_{\theta}+\frac{1}{3}\mathcal{P}_{\theta}-\frac{1}{\mathcal{P}_{\theta}^2}.
\end{equation*}
Note that it is suffices to show \(|E(y,s)| \lesssim se^{-3s}|y|^{10}\) in the region \(\left\{|y| \leq e^{\frac{s}{4}}\right\}\), where we have 
\begin{align*}
   \forall |y| \leq e^{s/4}, \quad  \mathcal{P}_{\theta}(y,s)=\left(3+e^{-s}y_1^2y_2^2+\theta e^{-2s}(y_1^6+y_2^6)\right)^{\frac{1}{3}}+\mathcal{C}(y,s), \quad  \theta \in (0,\theta^*),
\end{align*}
and \(\mathcal{C}(y,s)\) is the correction term defined in \ref{corr}.\\

Calculating each term of \(E\) yields:
\begin{align*}
    E(y,s)&=-\partial_s\mathcal{C}_1+\Delta \mathcal{C}_1-\frac{1}{2}y \cdot \nabla \mathcal{C}_1+\mathcal{C}_1\\
    & \quad + \frac{10}{\hbar} \delta e^{-2s}(y_1^4+y_2^4))-\frac{8}{9\hbar^5}e^{-2s}(y_1^4y_2^2+y_1^2y_2^4)\\
    & \quad +\left(\frac{8}{81\hbar}e^{-2s}y_1^4y_2^2+\frac{8}{81\hbar}e^{-2s}y_1^2y_2^4-\frac{16}{9\hbar^2}e^{-2s}y_1^2y_2^2\right)\\
    & \quad + \sqrt{2}e^{-2s}\left(\frac{4}{81\hbar}y_1^4+\frac{4}{81\hbar}y_2^4+\frac{16}{\hbar^3}+\frac{8}{81\hbar}y_1^2y_2^2-\frac{16}{9\hbar^2}y_1^2-\frac{16}{9\hbar^2}y_2^2\right)\\
    & \quad +se^{-3s}\langle y\rangle^{10},
\end{align*}
for \(|y| \leq e^{s/4}\), where 
\begin{align*}
\mathcal{C}_1(y,s)=\mathcal{C}(y,s)-\frac{\hbar}{9} e^{-s} (-2 y_1^2 - 2 y_2^2 + 4), \quad \hbar=3^{\frac{1}{3}}.
\end{align*}
Combining the above terms leads to the estimate
\begin{align*}
    |E(y,s)|\lesssim se^{-3s}|y|^{10},
\end{align*}
for \(|y| \leq e^{s/4}\). This establishes the desired estimate.\\
(ii) Recall that by \ref{E}, \(\mathcal{E}(z,s)\) reads as
\begin{align*}
    \mathcal{E}(z,s)=-\partial_s\mathcal{P}_{\theta}+e^{-s/2}\Delta \mathcal{P}_{\theta}-\frac{1}{4}(z \cdot \nabla)\mathcal{P}_{\theta}+\frac{1}{3}\mathcal{P}_{\theta}-\frac{1}{\mathcal{P}_{\theta}^2},
\end{align*}
for \(|z| \geq 2K\), where \(\mathcal{P_{\theta}=\mathcal{P_{\theta}}}(z,s)\) is defined in \ref{qp}. We now calculate \(\mathcal{E}(z,s)\) in the regions\\ \(\{2K \leq |z| \leq \frac{1}{4}e^{\frac{s}{4}}\}\), \(\{\frac{1}{4}e^{\frac{s}{4}} \leq |z| \leq \frac{1}{2}e^{\frac{s}{4}}\}\) and \(\{\frac{1}{2}e^{\frac{s}{4}} \leq |z| \leq e^{\frac{s}{4}}\}\), respectively.\\

For \(\{2K \leq |z| \leq \frac{1}{4}e^{\frac{s}{4}}\}\), we have \(\mathcal{P}_{\theta}=(3+z_1^2z_2^2+\theta e^{-\frac{s}{2}}(z_1^6+z_2^6))^{\frac{1}{3}}\) and observe that
\begin{align*}
    -\partial_s\mathcal{P}_{\theta}-\frac{1}{4}(z \cdot \nabla)\mathcal{P}_{\theta}+\frac{1}{3}\mathcal{P}_{\theta}-\frac{1}{\mathcal{P}_{\theta}^2}=0,
\end{align*}
thus, it is left to estimate \(e^{-s/2} \Delta \mathcal{P}_{\theta}\). By a direct computation, we have the estimate
\begin{align*}
    |e^{-s/2}\Delta \mathcal{P}_{\theta}|& \lesssim \frac{e^{-s/2}|z|^2}{(3+z_1^2z_2^2+\theta e^{-s/2}(z_1^6+z_2^6))^{5/3}}+\frac{e^{-s/2}z_1^2z_2^2|z|^2}{(3+z_1^2z_2^2+\theta e^{-s/2}(z_1^6+z_2^6))^{5/3}}\\
    &\quad +\frac{e^ {-s}|z|^8}{(3+z_1^2z_2^2+\theta e^{-s/2}(z_1^6+z_2^6))^{5/3}}\\
    & \lesssim \min{\{e^{-s/3},e^{-s/2}|z|^2\}}.
\end{align*}
For \(\{\frac{1}{4}e^{\frac{s}{4}} \leq |z| \leq \frac{1}{2}e^{\frac{s}{4}}\}\), we have to take account for the cutoff function \(\mchi_{\kappa(s)}\), where we have
\begin{align*}
    \mathcal{P}_{\theta}(y,s) = \Psi_{\theta}(z,s)\mchi_{\frac{1}{4}\kappa(s)}(z,s)+e^{\frac{s}{3}}\left(1-\mchi_{\frac{1}{4}\kappa(s)}(z,s)\right).
\end{align*}
By a direct calculation, we derive
\begin{align*}
    |\mathcal{E}(z,s)| \lesssim \min{\{e^{-s/3},e^{-s/2}|z|^2\}}.
\end{align*}
Finally, for \(\frac{1}{2}e^{\frac{s}{4}} \leq |z| \leq e^{\frac{s}{4}}\), we have \(\mathcal{P}_{\theta}(z,s)=e^{\frac{s}{3}}\), for \(\frac{1}{2}e^{\frac{s}{4}} \leq |z| \leq e^{\frac{s}{4}}\), which implies
\begin{align*}
    \mathcal{E}(z,s)=e^{-\frac{2}{3}s}.
\end{align*}
Combining the above estimates yields
\begin{align*}
    |\mathcal{E}(z,s)| \lesssim \min{\{e^{-s/3},e^{-s/2}|z|^2\}}, \quad |z| \geq 2K,
\end{align*}
which concludes the proof of Lemma \ref{Lemma4.2}.
\end{proof}

\vspace{10pt}

\section{Control of the Solution Within the Bootstrap Regime}
\label{section5}
We prove Proposition \ref{Proposition3.2} to complete the proof of Theorem \ref{Theorem3.1} (hence, Theorem \ref{Theorem 1.1}). Throughout this section, we fix \(s_1 \gg s_0\) and assume \(\varepsilon(s) \in S(s)\) for all \(s \in [s_0,s_1]\). We shall use energy estimates and the comparison principle to show that the bootstrap estimates \ref{in2}, \ref{in3}, \ref{in4}, \ref{in5}, \ref{int1}, \ref{bootstrap: int 2} and \ref{out2} hold for all \(s \in [s_0,s_1]\) with a strict inequality, which is the conclusion of Proposition \ref{Proposition3.2}.

\subsection{Control of \(\veph\) in \text{Compact Sets}}
\label{Control of veph in K}

We improve the bootstrap constants \(A_1\), \(A_2\) and \(A_3\) by establishing energy estimates for \(\|\veph_-\|_{L^2_{\rho}(\mathbb{R}^2)}\), \(\|\partial_i \veph_-\|_{L^2_{\rho}(\mathbb{R}^2)}\), \(\|\partial^2 _{ij}\veph_-\|_{L^2_{\rho}(\mathbb{R}^2)}\), where \(\veph=\veph(y,s)\), and establishing an ODE for \(|\veph_{ij}(s)|\) for \(i+j=8\), thereby showing that the bootstrap conditions \ref{in2}, \ref{in3}, \ref{in4}, \ref{in5} hold for all \(s \in [s_0,s_1]\) with strict inequalities.\\

Note that by definition, we have 
\(\veph(y,s)=\varepsilon(y,s) \mchi_{K}(z)\). Combining this with the PDE (\ref{semilinearpde}), we have
\begin{align}
    \partial_s\veph=\mathcal{L}\veph+\hat{V}(y,s)\veph+NL(\veph)+\hat{E}+\hat{R},\label{innerpde}
\end{align}
where \(\hat{E}:=E(y,s)\mchi_K(z)\), and by Lemma \ref{Lemma4.2}, we can derive \(\|\hat{E}\|_{L^2_{\rho}(\mathbb{R}^2)}=o(se^{-3s})\); the potential term \(\hat{V}(y,s)=\frac{2}{P_{\theta}^3}-\frac{2}{3}\) and satisfies the estimate \(|\hat{V}(y,s)| \lesssim e^{-s}|y|^4\) for \(0 \leq |y| \leq 2Ke^{s/4}\); \(NL(\veph)\) is the nonlinear term defined in Lemma \ref{Lemma4.1} (iii) and satisfies \(\|NL(\veph)\|_{L^2_{\rho}(\mathbb{R}^2)}=o(se^{-3s})\). Finally, 
\(\hat{R}\) is defined as 
\begin{align}
    \hat{R}&=\left(\frac{1}{2}y \cdot \nabla \mchi_K(z)\right) \varepsilon-2 \nabla \varepsilon \cdot \nabla \mchi_K(z) -\varepsilon \Delta \mchi_K(z) \nonumber \\
    & \qquad \qquad + \sum_{k=2}^{\infty}\binom{-2}{k}\left(\frac{\varepsilon^k\mchi_K(z)(1-\mchi_K^{k-1}(z))}{\mathcal{P}_{\theta}^{k+2}}\right)-\varepsilon \cdot \partial_s\mchi_K(z), \label{eq:hatR}
\end{align}
where the derivatives are taken with respect to the variable \(y\).
\begin{lemma}
\label{lem:estimR}
    We have the estimate
    \begin{align*}
        \|\hat{R}\|_{L^2_{\rho}(\mathbb{R}^2)}=o(se^{-3s}).
    \end{align*}
\end{lemma}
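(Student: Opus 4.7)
\textbf{Proof proposal for Lemma~\ref{lem:estimR}.} The plan is to exploit the fact that every term contributing to $\hat{R}$ is supported in the annular region where $\mchi_K(z)$ transitions between $1$ and $0$, namely
\[
\Omega_s := \{y : K e^{s/4} \leq |y| \leq 2K e^{s/4}\},
\]
since both $\nabla_y \mchi_K(z)$, $\partial_s \mchi_K(z)$ (with $z = y e^{-s/4}$) and the factor $\mchi_K(z)(1 - \mchi_K^{k-1}(z))$ vanish away from $\Omega_s$. On $\Omega_s$, the Gaussian weight obeys $\rho(y) = \frac{1}{4\pi} e^{-|y|^2/4} \lesssim e^{-K^2 e^{s/2}/4}$, which decays super-exponentially in $s$ and will dominate any polynomial or fixed-exponential growth of the integrand.

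Concretely, I would proceed term by term. Writing $z = y e^{-s/4}$, the scaling gives $|\nabla_y \mchi_K(z)| \lesssim e^{-s/4}\mathbf{1}_{\Omega_s}$, $|\Delta_y \mchi_K(z)| \lesssim e^{-s/2}\mathbf{1}_{\Omega_s}$, and $|\partial_s \mchi_K(z)| \lesssim \mathbf{1}_{\Omega_s}$. For the zeroth- and first-order terms in $\varepsilon$, the pointwise bound from Lemma~\ref{Lemma4.1}(iii) gives $|\varepsilon(y,s)| \lesssim e^{-32s/12}|y|^9 \lesssim e^{-5s/12}$ on $\Omega_s$. For the gradient term $\nabla \varepsilon \cdot \nabla \mchi_K(z)$, I use that $\nabla_y \mchi_K(z)$ is radial, so
\[
\nabla \varepsilon \cdot \nabla \mchi_K(z) = \frac{(y \cdot \nabla)\varepsilon}{|y|^2}\, y \cdot \nabla \mchi_K(z),
\]
and the Sobolev embedding associated with the $\flat$-norm applied to bootstrap condition~\eqref{int1} (with $k=1,2$) yields $|(y \cdot \nabla)\varepsilon(y,s)| \lesssim e^{-32s/12}|y|^9$, which is at most $e^{-5s/12}$ on $\Omega_s$. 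For the nonlinear sum, the pointwise bound on $\varepsilon$ again gives $|\varepsilon|^k/\mathcal{P}_\theta^{k+2} \lesssim (e^{-5s/12})^k$, and the geometric series converges exactly as in the proof of Lemma~\ref{Lemma4.1}(ii).

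Putting everything together, each piece of $\hat R$ is bounded pointwise on $\Omega_s$ by a factor that is polynomial in $e^{s}$, while the full integral against $\rho$ is multiplied by
\[
\int_{\Omega_s} \rho(y)\, dy \;\lesssim\; e^{s/2} e^{-K^2 e^{s/2}/4}.
\]
Thus $\|\hat R\|_{L^2_\rho(\mathbb{R}^2)}^2 \lesssim e^{C s} e^{-K^2 e^{s/2}/4}$, which is far smaller than any power of $e^{-s}$ times $s$, and in particular is $o(s e^{-3s})$. The main technical point to be careful about is the gradient term $\nabla \varepsilon \cdot \nabla \mchi_K$, since one has to extract a pointwise bound on $\nabla \varepsilon$ in the annulus $\Omega_s$; the cleanest route, as sketched above, is to reduce to the radial derivative via the explicit radial structure of $\nabla \mchi_K$ and then invoke the bootstrap control on $(y\cdot\nabla)\varepsilon$ in the $\flat$-norm together with Sobolev embedding (which will anyway be established in detail in Subsection~\ref{subsection5.2}).
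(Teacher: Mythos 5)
Your proposal is essentially the same approach as the paper's: observe that every term of $\hat R$ is supported in the transition annulus $\Omega_s = \{Ke^{s/4} \leq |y| \leq 2Ke^{s/4}\}$, establish pointwise bounds there using the bootstrap information, and then let the super-exponential decay of the Gaussian weight $\rho$ on $\Omega_s$ dominate. The estimates $|\varepsilon| \lesssim e^{-5s/12}$ on $\Omega_s$ from Lemma~\ref{Lemma4.1}(iii), the scaling of derivatives of $\mchi_K(ye^{-s/4})$, and the geometric series argument for the nonlinear sum all match.

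The one place where your argument has a genuine technical gap is the pointwise bound on the gradient term. You propose to reduce $\nabla\varepsilon\cdot\nabla\mchi_K$ to $(y\cdot\nabla)\varepsilon$ (which is a correct and nice observation, since $\nabla_y\mchi_K(z)$ is radial) and then obtain $|(y\cdot\nabla)\varepsilon| \lesssim e^{-32s/12}|y|^9$ from "Sobolev embedding associated with the $\flat$-norm applied to \eqref{int1} with $k=1,2$." This step does not go through as stated: the $\flat$-norm bootstrap controls $(y\cdot\nabla)^k\varepsilon$ only for $k=0,1,2$ in a weighted $L^2$ sense, and $(y\cdot\nabla)=r\partial_r$ only sees radial derivatives, so even combining $k=1$ and $k=2$ gives $L^2$-control of one radial derivative of $(y\cdot\nabla)\varepsilon$ but no angular information; in two dimensions this falls short of $L^\infty$. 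The paper closes this gap differently, via parabolic Schauder regularity applied to the $\eta$-equation (Remarks~\ref{rmk: L2 rho control outer} and \ref{rmk: L2 rho control inner intermd control on boundary} in Appendix~\ref{appdx}), which produces $|\partial_i\varepsilon(y,s)| \lesssim e^{-32s/12}|y|^8$ on the relevant annulus and hence $|\nabla\varepsilon|\lesssim e^{-8s/12}$ there. Your reference to Subsection~\ref{subsection5.2} for this pointwise control is also misplaced; the needed estimates come from Appendix~\ref{appdx}, not from the energy estimates of Subsection~\ref{subsection5.2}. Once the gradient bound is obtained via parabolic regularity rather than $\flat$-Sobolev, the rest of your argument is sound and the conclusion $\|\hat R\|_{L^2_\rho}\lesssim e^{Cs}e^{-K^2e^{s/2}/8}=o(se^{-3s})$ follows.
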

\begin{proof}
    Note that by Lemma \ref{Lemma4.1} (ii), we have \(|\varepsilon(y,s)| \lesssim e^{-\frac{5}{12}s}\) in \(Ke^{s/4} \leq |y| \leq 2Ke^{s/4}\). Moreover, by Remark \ref{rmk: L2 rho control outer}, we have \(|\nabla\varepsilon(y,s)| \lesssim e^{-\frac{8}{12}s}\) for \(Ke^{s/4} \leq |y
    |\leq 2Ke^{s/4}\). Thus,  
    \(|\hat{R}(y,s)| \lesssim e^{-\frac{5}{12}s}\) for all \(y \in \mathbb{R}^2\) and hence
    \begin{align*}
        \|\hat{R}\|_{L^2_{\rho}(\mathbb{R}^2)}&=\left(\int_{\mathbb{R}^2}|\hat{R}(y,s)|^2\rho(y)\,dy\right)^{\frac{1}{2}} \lesssim e^{-\frac{5}{12}s}\left( \int_{|y| \geq e^{s/4}}\rho(y)\,dy\right)^{\frac{1}{2}} = o(se^{-3s}).
    \end{align*}
\end{proof}

\begin{remark}
\label{rmk:partial ij varepslion}
    By Remark \ref{rmk: L2 rho control outer} and Remark \ref{rmk: L2 rho control inner intermd control on boundary}, we also have 
    \begin{align*}
             |\partial^2_{ij}\varepsilon(y,s)| \lesssim e^{-\frac{11}{12}s}, \quad
              |\partial^3_{ijk}\varepsilon(y,s)| \lesssim e^{-\frac{14}{12}s}, \quad i,j,k=1,2,
         \end{align*}
         in \(Ke^{s/4} \leq |y| \leq 2Ke^{s/4}\).
\end{remark}
\begin{lemma}
\label{lem:estm Vveph}
\label{lem:Vveph}
    We have the estimate 
    \begin{align*}
        \|\hat{V}\veph(s)\|_{L^2_{\rho}(\mathbb{R}^2)}=o(se^{-3s}).
    \end{align*}
\end{lemma}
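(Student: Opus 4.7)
The plan is to split $\mathbb{R}^2$ into the two regions $\{|y|\le 100K\}$ and $\{100K\le |y|\le 2Ke^{s/4}\}$ (outside the latter, $\mchi_K(z)=0$ so $\veph\equiv 0$), and in each region combine the pointwise decay of $\hat V$ with the appropriate pointwise/integral bound on $\veph$ that is already available from Lemma~\ref{Lemma4.1}. The crucial gain is the factor $e^{-s}$ coming from $|\hat V(y,s)|\lesssim e^{-s}|y|^4$; this gain is what upgrades the bootstrap control $e^{-\frac{32}{12}s}=e^{-\frac{8}{3}s}$ on $\veph$ to the required $o(se^{-3s})$, since $1+\tfrac{8}{3}=\tfrac{11}{3}>3$.

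In the compact region $|y|\le 100K$, Lemma~\ref{Lemma4.1}(i) together with bootstrap conditions \eqref{in1}--\eqref{in2} yield the pointwise bound $|\veph(y,s)|\lesssim C(K)e^{-\frac{8}{3}s}$, while $|\hat V(y,s)|\lesssim e^{-s}|y|^4\lesssim C(K)e^{-s}$. Combining and using that $\rho$ is integrable, we get
\[
\Bigl(\int_{|y|\le 100K}|\hat V\veph|^2\rho\,dy\Bigr)^{1/2}\lesssim C(K)e^{-\frac{11}{3}s},
\]
which is $o(se^{-3s})$.

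In the intermediate region $100K\le |y|\le 2Ke^{s/4}$ (where $\mchi_K(z)$ may still be nonzero), I would invoke the pointwise estimate from Lemma~\ref{Lemma4.1}(iii), namely $|\varepsilon(y,s)|\lesssim e^{-\frac{8}{3}s}|y|^9$ for $|y|\ge 2K$, to bound
\[
|\hat V(y,s)\veph(y,s)|^2\lesssim e^{-2s}|y|^{8}\cdot e^{-\frac{16}{3}s}|y|^{18}=e^{-\frac{22}{3}s}|y|^{26}.
\]
Integrating against the Gaussian weight $\rho(y)=\tfrac{1}{4\pi}e^{-|y|^2/4}$ absorbs the polynomial factor $|y|^{26}$ into a finite constant, so
\[
\Bigl(\int_{100K\le |y|\le 2Ke^{s/4}}|\hat V\veph|^2\rho\,dy\Bigr)^{1/2}\lesssim e^{-\frac{11}{3}s}\Bigl(\int_{\mathbb{R}^2}|y|^{26}\rho(y)\,dy\Bigr)^{1/2}\lesssim e^{-\frac{11}{3}s}.
\]

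Adding the two contributions gives $\|\hat V\veph(s)\|_{L^2_\rho(\mathbb{R}^2)}\lesssim e^{-\frac{11}{3}s}=o(se^{-3s})$, as claimed. There is no real obstacle here: the only subtle point is verifying that the pointwise bound $|\varepsilon|\lesssim e^{-\frac{8}{3}s}|y|^9$ from Lemma~\ref{Lemma4.1}(iii) is valid uniformly up to $|y|\sim e^{s/4}$ (so that the Gaussian tail makes the polynomial growth harmless), which follows directly from the bootstrap intermediate condition \eqref{int1} via Sobolev embedding already used in the proof of Lemma~\ref{Lemma4.1}.
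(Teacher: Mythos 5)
Your proof is correct and yields the same final bound $\|\hat V\veph(s)\|_{L^2_\rho(\mathbb{R}^2)}\lesssim e^{-\frac{11}{3}s}$, but it takes a genuinely different route from the paper. The paper does not decompose the domain at all: it applies Cauchy--Schwarz directly to $\int_{\mathbb{R}^2}|\hat V|^2|\veph|^2\rho\,dy$, pulls out the polynomial weight via $(\int|y|^{16}\rho\,dy)^{1/2}$, and controls the remaining factor $(\int|\veph|^4\rho\,dy)^{1/2}\lesssim e^{-\frac{32}{6}s}$ using the $L^4_\rho$ moment estimate recorded in Remark~\ref{rmk:veph^2 estimate}. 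You instead split $\{|y|\le 100K\}$ from $\{100K\le|y|\le 2Ke^{s/4}\}$, and in each piece substitute pointwise bounds for $\veph$: the local $L^\infty$ bound of Lemma~\ref{Lemma4.1}(i) near the origin, and the polynomial pointwise bound $|\varepsilon|\lesssim e^{-\frac{8}{3}s}|y|^9$ of Lemma~\ref{Lemma4.1}(iii) in the intermediate region, using that $|\veph|\le|\varepsilon|$ because $\mchi_K(z)\le 1$. Both arguments close because the potential contributes the crucial extra factor $e^{-s}$, so the total rate $e^{-(1+8/3)s}=e^{-\frac{11}{3}s}$ beats $se^{-3s}$. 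The paper's version is shorter and avoids invoking any pointwise estimate for $\veph$ at intermediate scales; yours is more modular and reuses the same pointwise bounds that appear elsewhere in the nonlinear estimates, so it makes the dependence on the bootstrap regime slightly more transparent, at the cost of a two-region case analysis.
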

\begin{proof}
    By using the Cauchy-Schwarz inequality and \(|\hat{V}(y,s)| \lesssim e^{-s}|y|^4\) for \(0 \leq |y| \leq 2Ke^{s/4}\), we have 
    \begin{align*}
        \|\hat{V}\veph(s)\|^2_{L^2_{\rho}(\mathbb{R}^2)}&=\int_{\mathbb{R}^2}|\hat{V}(y,s)|^2|\veph(y,s)|^2\rho(y)\,dy\\
        & \lesssim e^{-2s}\left(\int_{\mathbb{R}^2}|y|^{16}\rho(y)\,dy\right)^{1/2}\left(\int_{\mathbb{R}^2}|\veph(y,s)|^4\rho(y)\,dy\right)^{1/2} \lesssim e^{-\frac{44}{6}s},
    \end{align*}
where we used
    \begin{align*}
    \left(\int_{\mathbb{R}^2}|\veph(y,s)|^4\rho(y)\,dy\right)^{1/2} \lesssim e^{-\frac{32}{6}s},
    \end{align*}
from Remark \ref{rmk:veph^2 estimate}. This shows that \(\|\hat{V}\veph(s)\|_{L^2_{\rho}(\mathbb{R}^2)}=o(se^{-3s})\).
\end{proof}
Next, by the nonlinear estimate for \(\veph\) in Lemma \ref{Lemma4.1} (iii), Lemma \ref{lem:estimR}, and Lemma \ref{lem:Vveph}, we can further write the PDE (\ref{innerpde}) as
\begin{align}
    \partial_s\veph=\mathcal{L}\veph +\hat{G},\label{innerpdered}
\end{align}
for all \(s \in [s_0,s_1]\), where \(\hat{G}=\hat{V}(y,s)\veph+NL(\veph)+\hat{E}+\hat{R}\) and satisfies \(\|\hat{G}\|_{L^2_{\rho}(\mathbb{R}^2)}=o(se^{-3s})\).

With the spectral gap \ref{inq:spec gap}, we prove the monotonicity of $\|\varepsilon(s)\|_{L^2_\rho}$ in the following.
\begin{lemma}
\label{Lemma 5.2}
    We have the energy estimate
    \begin{align}
        \frac{1}{2}\frac{d}{ds}\|\veph_-\|^2_{L^2_{\rho}(\mathbb{R}^2)} \leq -\frac{7}{2}\|\veph_-\|^2_{L^2_{\rho}(\mathbb{R}^2)}+A_1 e^{-\frac{32}{6}s},\label{en1}
  \end{align}
  \end{lemma}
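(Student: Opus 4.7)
The plan is to derive an $L^2_\rho$-energy identity for $\veph_-$ from the reduced equation \eqref{innerpdered}, apply the spectral gap \eqref{inq:spec gap} on the negative subspace, and absorb the residual source term using the bound $\|\hat{G}\|_{L^2_\rho(\mathbb{R}^2)} = o(se^{-3s})$ already established above together with the bootstrap hypothesis on $\veph_-$.

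First, I would project equation \eqref{innerpdered} onto the orthogonal complement of $\mathrm{span}\{H_{ij}\}_{i+j\leq 8}$ in $L^2_\rho(\mathbb{R}^2)$. Because $\mathcal{L}$ is self-adjoint with respect to $\langle\cdot,\cdot\rangle_\rho$ and has $\{H_{ij}\}$ as an orthogonal eigenbasis, the projection $P_-$ onto $\{H_{ij}\}_{i+j\leq 8}^{\perp}$ commutes with $\mathcal{L}$, so
\[
\partial_s \veph_- = \mathcal{L}\veph_- + P_-\hat{G}.
\]
Taking the $L^2_\rho$ inner product with $\veph_-$ then yields
\[
\frac{1}{2}\frac{d}{ds}\|\veph_-\|^2_{L^2_\rho(\mathbb{R}^2)} = \langle \mathcal{L}\veph_-, \veph_-\rangle_\rho + \langle P_-\hat{G}, \veph_-\rangle_\rho.
\]

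For the first term on the right, the spectral gap \eqref{inq:spec gap} gives
\[
\langle \mathcal{L}\veph_-, \veph_-\rangle_\rho \leq -\tfrac{7}{2}\|\veph_-\|^2_{L^2_\rho(\mathbb{R}^2)}.
\]
For the second, I would apply Cauchy-Schwarz together with $\|P_-\hat{G}\|_{L^2_\rho}\leq \|\hat{G}\|_{L^2_\rho} = o(se^{-3s})$ and the bootstrap estimate \eqref{in3}:
\[
|\langle P_-\hat{G}, \veph_-\rangle_\rho| \leq \|\hat{G}\|_{L^2_\rho(\mathbb{R}^2)}\|\veph_-\|_{L^2_\rho(\mathbb{R}^2)} = o\bigl(A_1 s\, e^{-\frac{68}{12}s}\bigr).
\]
Since $s\,e^{-\frac{s}{3}}\to 0$ as $s\to\infty$, by taking $s_0$ sufficiently large (which is already assumed throughout Section \ref{section5}) this is bounded by $A_1 e^{-\frac{32}{6}s}$, giving the desired inequality.

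I do not anticipate any serious obstacle: the decomposition $\veph = \sum_{i+j\leq 8}\veph_{ij}H_{ij}+\veph_-$ is compatible with $\mathcal{L}$ by self-adjointness, and all ingredients, namely the source bound $\|\hat G\|_{L^2_\rho}=o(se^{-3s})$ (from Lemma \ref{Lemma4.1}(ii), Lemma \ref{lem:estimR}, Lemma \ref{lem:estm Vveph} and Lemma \ref{Lemma4.2}) and the bootstrap bound \eqref{in3}, are in place. The only subtlety worth being careful about is that the decay rate $e^{-\frac{32}{6}s}$ on the right-hand side is precisely quadratic in the bootstrap threshold $e^{-\frac{32}{12}s}$, which is exactly what one obtains by pairing $\|\hat G\|_{L^2_\rho}$ with $\|\veph_-\|_{L^2_\rho}$ and using $3 > \frac{32}{12}$ together with the $o(\cdot)$ slack to kill the polynomial factor $s$.
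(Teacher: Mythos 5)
Your proposal is correct and matches the paper's argument: both project \eqref{innerpdered} onto the orthogonal complement, apply the spectral gap \eqref{inq:spec gap}, and control $\langle \hat G, \veph_-\rangle_\rho$ by Cauchy--Schwarz together with $\|\hat G\|_{L^2_\rho}=o(se^{-3s})$ and the bootstrap bound \eqref{in3}. You simply spell out in more detail the commutation of $P_-$ with $\mathcal{L}$ and the absorption of the $o(se^{-\frac{68}{12}s})$ remainder into $A_1 e^{-\frac{32}{6}s}$, which the paper leaves implicit.
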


\begin{proof}
    By equation (\ref{innerpdered}), bootstrap condition (\ref{in3}), the \(L^{2}_{\rho}\) estimate of \(\hat{G}\) above, and the spectral gap \ref{inq:spec gap}, we have
    \begin{align*}
        \frac{1}{2}\frac{d}{ds}\|\veph_-\|^2_{L^2_{\rho}(\mathbb{R}^2)} &\leq \left<\mathcal{L}\veph_-,\veph_-\right>_{\rho}+\left<\hat{G},\veph_-\right>_{\rho} \leq -\frac{7}{2}\|\veph_-\|^2_{L^2_{\rho}(\mathbb{R}^2)}+A_1e^{-\frac{32}{6}s},
    \end{align*}
which is the desired energy estimate.
\end{proof}

\begin{lemma}
\label{Lemma 5.3}
We have 
    \begin{align}
       i + j = 8, \quad  |\veph_{ij}'(s)+3\veph_{ij}(s)| \leq \frac{A_1}{K}e^{-\frac{32}{12}s}.\label{en2}
    \end{align}
\end{lemma}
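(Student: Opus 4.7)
The plan is to project the reduced linearized equation \eqref{innerpdered}, $\partial_s \veph = \mathcal{L}\veph + \hat{G}$, onto the finite dimensional eigenspace of $\mathcal{L}$ spanned by $\{H_{ij}\}_{i+j=8}$. Since $\mathcal{L}$ is self-adjoint on $L^2_\rho(\mathbb{R}^2)$ and each $H_{ij}$ with $i+j=8$ is an eigenfunction with eigenvalue $\lambda_{ij} = 1 - (i+j)/2 = -3$, taking the $L^2_\rho$-inner product of \eqref{innerpdered} with $H_{ij}$ and dividing by $\|H_{ij}\|_{L^2_\rho}^2$ yields the scalar ODE
\begin{equation*}
\veph_{ij}'(s) = -3\,\veph_{ij}(s) + \hat{G}_{ij}(s), \qquad \hat{G}_{ij}(s) := \frac{\langle \hat{G}(\cdot,s), H_{ij}\rangle_\rho}{\|H_{ij}\|_{L^2_\rho}^2}.
\end{equation*}
So the lemma reduces to showing $|\hat{G}_{ij}(s)| \leq \frac{A_1}{K} e^{-\frac{32}{12}s}$ for $s \in [s_0,s_1]$.

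For this, I would apply Cauchy--Schwarz to get
\begin{equation*}
|\hat{G}_{ij}(s)| \leq \frac{\|\hat{G}(s)\|_{L^2_\rho(\mathbb{R}^2)}}{\|H_{ij}\|_{L^2_\rho(\mathbb{R}^2)}},
\end{equation*}
and then invoke the component-wise estimates of $\hat{G} = \hat{V}\veph + NL(\veph) + \hat{E} + \hat{R}$ already established: the potential contribution $\hat{V}\veph$ is controlled by Lemma \ref{lem:estm Vveph}, the nonlinear piece $NL(\veph)$ by Lemma \ref{Lemma4.1}(ii), the truncated error $\hat{E} = E\,\mchi_K(z)$ by Lemma \ref{Lemma4.2}(i) combined with the Gaussian decay of $\rho$ (which tames the polynomial factor $|y|^{10}$), and the cutoff remainder $\hat{R}$ by Lemma \ref{lem:estimR}. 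Each of these four estimates is of order $o(se^{-3s})$, so summing them gives $\|\hat{G}(s)\|_{L^2_\rho(\mathbb{R}^2)} = o(se^{-3s})$, with an implicit constant depending on the bootstrap parameters $A_1, \ldots, D_1$ and $K$.

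To conclude, the only remaining step is an exponent comparison. Since $3 > \frac{32}{12} = \frac{8}{3}$, one has $s e^{-3s} = s e^{-s/3}\, e^{-\frac{32}{12}s}$, and the prefactor $s e^{-s/3}$ tends to zero as $s\to\infty$. Hence, by choosing the initial time $s_0 = s_0(A_1, A_2, A_3, K, B_1, B_2, B_3, C_1, C_2, C_3, D_1)$ large enough, we can absorb the implicit constant from the $L^2_\rho$ bound on $\hat{G}$ and guarantee $|\hat{G}_{ij}(s)| \leq \frac{A_1}{K}e^{-\frac{32}{12}s}$ for all $s \geq s_0$, which is the desired inequality. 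I do not anticipate a real obstacle: the difficult technical work—reducing the full PDE \eqref{innerpde} to \eqref{innerpdered} and bounding each piece of $\hat G$—is already done in the preceding lemmas, so here it is essentially a projection-plus-Cauchy--Schwarz computation and a check that the gain between $e^{-3s}$ and $e^{-\frac{32}{12}s}$ absorbs all constants, including the factor of $K^{-1}$.
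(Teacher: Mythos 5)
Your proposal is correct and follows essentially the same route as the paper: project the reduced equation $\partial_s\veph=\mathcal{L}\veph+\hat G$ onto the eigenfunctions $H_{ij}$ with $i+j=8$ (eigenvalue $-3$), then bound the source term $\langle\hat G,H_{ij}\rangle_\rho/\|H_{ij}\|_{L^2_\rho}^2$ via Cauchy--Schwarz and the previously established $L^2_\rho$ control $\|\hat G\|_{L^2_\rho}=o(se^{-3s})$, absorbing the implicit constants through the exponent gap $3>\tfrac{32}{12}$ by taking $s_0$ large. The paper's write-up is more terse (it states the projected ODE and asserts the source bound directly), but the underlying argument and the ingredients it relies on (Lemma \ref{Lemma4.1}(ii), Lemma \ref{Lemma4.2}(i), Lemma \ref{lem:estimR}, Lemma \ref{lem:estm Vveph}) are exactly those you invoke.
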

\begin{proof}
    Recall that we have the decomposition \(\veph(y,s)=\sum_{i+j \leq 8} \veph_{ij}(s)H_{ij}(y)\). Using this expansion and projecting equation (\ref{innerpdered}) onto each eigenfunction \(H_{ij}\) yields
    \begin{align}
        \veph'_{ij}(s)=\left(1-\frac{i+j}{2}\right)\veph_{ij}(s)+\frac{1}{\|H_{ij}\|_{L^2_{\rho}(\mathbb{R}^2)}}\left<\hat{G},H_{ij}\right>_{\rho}.\label{pdecom}
    \end{align}
    Since \(i+j=8\), and 
    \begin{align*}
        \frac{1}{\|H_{ij}\|_{L^2_{\rho}(\mathbb{R}^2)}}\left|\left<\hat{G},H_{ij}\right>_{\rho} \right| \leq \frac{A_1}{K}e^{-\frac{32}{12}s},
    \end{align*}
    we derive the estimate
    \begin{align*}
        |\veph_{ij}'(s)+3\veph_{ij}(s)| \leq \frac{A_1}{K}e^{-\frac{32}{12}s},
    \end{align*}
    which concludes the Lemma \ref{Lemma 5.3}.
\end{proof}

We now proceed to establish energy estimates for
\(\|\partial_i\veph_-\|_{L^2_{\rho}(\mathbb{R}^2)}\) and \(\|\partial^2_{ij}\veph_-\|_{L^2_{\rho}(\mathbb{R}^2)}\) for \(i,j=1,2\).\\

Note that by Remark \ref{rmk: L2 rho control inner intermd control on boundary} in Appendix \ref{appdx}, we have
\begin{align}
    &\|\partial_i\veph(s)\|_{L^{\infty}(|y| \leq 50K)} \lesssim C(K) e^{-\frac{32}{12}s},\label{inq:ptwise local i}\\
    &\|\partial^2_{ij}\veph(s)\|_{L^{\infty}(|y| \leq 50K)} \lesssim C(K)e^{-\frac{32}{12}s},\label{inq:ptwise local ij}
\end{align}
and by remark \ref{rmk: L2 rho control outer} in Appendix \ref{appdx}, we derive the pointwise estimates for \(4K \leq |y| \leq 2Ke^{\frac{s}{4}}\):
\begin{align}
    &|\partial_i\veph(y,s)| \lesssim  e^{-\frac{32}{12}s}|y|^8,\label{inq:ptwise i geq K}\\
    & |\partial^2_{ij}\veph(y,s)|  \lesssim  e^{-\frac{32}{12}s}|y|^7.\label{inq:ptwise ij geq K}
\end{align}

\begin{lemma}
\label{lem:energy par i veph}
    We have the energy estimate
    \begin{align}
        \frac{1}{2}\partial_s\|\partial_i\veph_-\|^2_{L^2_{\rho}(\mathbb{R}^2)} \leq -\frac{7}{2}\|\partial_i\veph_-\|^2_{L^2_{\rho}(\mathbb{R}^2)}+A_2e^{-\frac{32}{6}s}, \quad i,j=1,2.\label{en6}
    \end{align}
\end{lemma}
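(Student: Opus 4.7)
The plan is to differentiate equation (\ref{innerpdered}) with respect to $y_i$ and exploit the commutator identity $[\partial_i, \mathcal{L}] = -\tfrac{1}{2}\partial_i$, which follows from $\partial_i(y \cdot \nabla f) = \partial_i f + y \cdot \nabla \partial_i f$. This yields
\begin{equation*}
   \partial_s(\partial_i \veph) = \bigl(\mathcal{L} - \tfrac{1}{2}\bigr)\partial_i \veph + \partial_i \hat G.
\end{equation*}
I then take the $L^2_\rho$ inner product with $\partial_i \veph_-$. The key reduction is that $\veph_-$ is $L^2_\rho$-orthogonal to $\{H_{kl}\}_{k+l \leq 8}$, so it lies in the closed span of the Hermite eigenspaces labeled $k+l \geq 9$; applying $\partial_i$, which sends an eigenspace of level $k+l$ into one of level $k+l-1$, places $\partial_i \veph_-$ in the span of eigenspaces with $k+l \geq 8$. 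On the other hand, the low-mode part of $\partial_i \veph$, namely $\sum_{k+l\leq 8}\veph_{kl}(s)\partial_i H_{kl}$, lives in the span of eigenspaces with $k+l \leq 7$, and is therefore $L^2_\rho$-orthogonal to $\partial_i \veph_-$. This orthogonality kills all cross terms on both sides and leaves
\begin{equation*}
   \frac{1}{2}\frac{d}{ds}\|\partial_i \veph_-\|^2_{L^2_\rho} = \bigl\langle (\mathcal{L}-\tfrac{1}{2})\partial_i \veph_-, \partial_i \veph_-\bigr\rangle_\rho + \langle \partial_i \hat G, \partial_i \veph_-\rangle_\rho.
\end{equation*}
Invoking the spectral gap (\ref{inq:spec partial i}), $\langle \mathcal{L}\partial_i \veph_-, \partial_i \veph_-\rangle_\rho \leq -3\|\partial_i \veph_-\|^2_{L^2_\rho}$, together with the $-\tfrac{1}{2}$ shift, produces exactly the dissipation coefficient $-\tfrac{7}{2}$ in (\ref{en6}).

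It then remains to bound $|\langle \partial_i \hat G, \partial_i \veph_-\rangle_\rho| \lesssim A_2 e^{-32s/6}$, which (after a harmless Young split absorbed into the dissipation) reduces to controlling $\|\partial_i \hat G\|_{L^2_\rho}$. Writing $\hat G = \hat V \veph + NL(\veph) + \hat E + \hat R$, I would treat each piece separately: $\partial_i(\hat V \veph)$ is handled via the pointwise bound $|\partial_i \hat V|\lesssim e^{-s}\langle y\rangle^3$ (from $\hat V = 2/\mathcal{P}_\theta^3 - 2/3$) together with the bootstrap derivative bound on $\veph$ and the pointwise estimates (\ref{inq:ptwise local i}), (\ref{inq:ptwise i geq K}); $\partial_i NL(\veph)$ is treated by the same geometric-series argument as in Lemma \ref{Lemma4.1}(ii) but with one factor of $\veph$ replaced by $\partial_i \veph$, which improves the decay rather than worsens it; and $\partial_i \hat E$ follows from differentiating the polynomial-weighted expansion in Lemma \ref{Lemma4.2}, still of order $s e^{-3s}$ in $L^2_\rho$.

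The main obstacle is $\partial_i \hat R$: inspecting (\ref{eq:hatR}), the remainder already contains the term $2\nabla\varepsilon \cdot \nabla \mchi_K(z)$ and the Laplacian of the cutoff applied to $\varepsilon$, so differentiating once more in $y_i$ forces us to place up to three derivatives on $\varepsilon$ in the transition annulus $Ke^{s/4} \leq |y| \leq 2Ke^{s/4}$. The required pointwise estimates $|\partial^2_{jk}\varepsilon| \lesssim e^{-11s/12}$ and $|\partial^3_{jkl}\varepsilon| \lesssim e^{-14s/12}$ on this annulus are precisely what Remark \ref{rmk:partial ij varepslion} provides, built from the intermediate-region bounds in Remark \ref{rmk: L2 rho control outer}. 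Together with the super-exponential Gaussian factor $e^{-|y|^2/4}$ evaluated at $|y|\sim e^{s/4}$, these polynomial losses are crushed, giving a contribution $o(s e^{-3s})$ in $L^2_\rho$. Combining all the pieces, Cauchy–Schwarz against $\|\partial_i \veph_-\|_{L^2_\rho} \leq A_2 e^{-32s/12}$ closes the estimate (\ref{en6}) with plenty of margin once $s_0$ is taken large and $A_2$ is chosen sufficiently large relative to $A_1$.
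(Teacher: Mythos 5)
Your proposal is correct and follows essentially the same route as the paper: differentiate the reduced inner equation, use the commutator $[\partial_i,\mathcal{L}]=-\tfrac12\partial_i$ (which is exactly where the paper's $\mathcal{L}(\partial_i\veph)-\tfrac12\partial_i\veph$ comes from), pair with $\partial_i\veph_-$, invoke the orthogonality/spectral-gap bounds \eqref{con:orth 1} and \eqref{inq:spec partial i}, and then bound $\|\partial_i\hat G\|_{L^2_\rho}$ term by term using the pointwise estimates \eqref{inq:ptwise local i}, \eqref{inq:ptwise i geq K}, Remark \ref{rmk:partial ij varepslion} and a Lemma-\ref{lem:estimR}-type argument for $\partial_i\hat R$. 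The one place where you are slightly more explicit than the paper — and helpfully so — is in spelling out \emph{why} $\partial_i\veph_-$ is orthogonal to the low modes (the degree-lowering property $\partial_1 H_{kl}=kH_{k-1,l}$, $\partial_2 H_{kl}=lH_{k,l-1}$), which justifies \eqref{con:orth 1} as stated for the $\veph_-$ piece; the only minor omission on your side is the chain-rule term $\partial_i\mathcal{P}_\theta$ when differentiating $NL(\veph)$, but this contributes at the same or better order and does not affect the conclusion.
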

\begin{proof}
    Differentiating equation \ref{innerpde} yields
    \begin{align}
    \label{eq:par veph i}
        \partial_s(\partial_i\veph)=\mathcal{L}(\partial_i\veph)-\frac{1}{2}\partial_i \veph+\partial_i\hat{V}\,\veph+\hat{V}\partial_i\,\veph+\partial_i \hat{E}+\partial_iNL(\veph)+\partial_i\hat{R}.
    \end{align}
    Using the pointwise estimates \ref{inq:ptwise local i} and \ref{inq:ptwise i geq K}, Remark \ref{rmk:partial ij varepslion}, Lemma \ref{lem:estm Vveph}, the nonlinear estimate for \(\veph\) in Lemma \ref{Lemma4.1} (iii) and an analysis similar to Lemma \ref{lem:estimR}, we have
    \begin{align*}
    \partial_i\hat{V}\,\veph+\hat{V}\partial_i\,\veph+\partial_i \hat{E}+\partial_iNL(\veph)+\partial_i\hat{R}=o(se^{-3s}),
    \end{align*}
    in \(L^2_{\rho}(\mathbb{R}^2)\). Thus, using the orthogonal condition \ref{con:orth 1} and by projecting equation \ref{eq:par veph i} with \(\partial_i\veph_-\) in \(L^2_{\rho}(\mathbb{R}^2)\), combined with the spectral gap \ref{inq:spec partial i} yields
    \begin{align*}
        \frac{1}{2}\partial_s\|\partial_i\veph_-\|^2_{L^2_{\rho}(\mathbb{R}^2)} 
        & \leq -\frac{7}{2}\|\partial_i\veph_-\|^2_{L^2_{\rho}(\mathbb{R}^2)}+A_2e^{-\frac{32}{6}s}.
    \end{align*}
    This concludes the energy estimate for \(\|\partial_i\veph\|_{L^2_{\rho}(\mathbb{R}^2)}\).
\end{proof}

By using the orthogonal condition \ref{con:orth 2} and the pointwise estimate \ref{inq:ptwise local ij}, \ref{inq:ptwise ij geq K}, combined with the pointwise estimates in Remark \ref{rmk:partial ij varepslion} and the spectral gap \ref{inq:spec partial ij}, we may prove the following Lemma by a similar fashion to Lemma \ref{lem:energy par i veph}, concluding the energy estimate for \(\|\partial^2_{ij}\veph_-\|_{L^2_{\rho}(\mathbb{R}^2)}\).
\begin{lemma}
\label{lem:energy par ij veph}
    We have the energy estimate
    \begin{align}
        \frac{1}{2}\partial_s\|\partial_{ij}\veph_-\|^2_{L^2_{\rho}(\mathbb{R}^2)} \leq -\frac{7}{2}\|\partial_{ij}\veph_-\|^2_{L^2_{\rho}(\mathbb{R}^2)}+A_3e^{-\frac{32}{6}s}, \quad i,j=1,2.\label{en7}
    \end{align}
\end{lemma}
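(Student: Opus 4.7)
The plan is to mirror the proof of Lemma \ref{lem:energy par i veph} verbatim, but with the second-order differential operator $\partial^2_{ij}$ in place of the first-order operator $\partial_i$. This requires three ingredients: deriving the evolution equation for $\partial^2_{ij}\veph$, showing that all lower-order terms are negligible in $L^2_\rho(\mathbb{R}^2)$, and closing the estimate via the spectral gap \ref{inq:spec partial ij} combined with the orthogonality condition \ref{con:orth 2}.

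I would first apply $\partial^2_{ij}$ to equation \ref{innerpde}. Since the commutator of $\partial_k$ with the drift term gives $[\partial_k,\mathcal{L}] = -\tfrac{1}{2}\partial_k$, iterating yields $[\partial^2_{ij},\mathcal{L}] = -\partial^2_{ij}$, so that
\[
\partial_s(\partial^2_{ij}\veph) = \mathcal{L}(\partial^2_{ij}\veph) - \partial^2_{ij}\veph + \partial^2_{ij}(\hat{V}\veph) + \partial^2_{ij}\hat{E} + \partial^2_{ij}NL(\veph) + \partial^2_{ij}\hat{R}.
\]
Next I would verify that the collection of terms beyond $\mathcal{L}(\partial^2_{ij}\veph) - \partial^2_{ij}\veph$ is $o(se^{-3s})$ in $L^2_\rho(\mathbb{R}^2)$, handling each term exactly as in Lemma \ref{lem:energy par i veph}. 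Expanding $\partial^2_{ij}(\hat{V}\veph)$ by the Leibniz rule produces four contributions; each is bounded using the pointwise decay $|\partial^\alpha \hat V| \lesssim e^{-s}|y|^{4-|\alpha|}$ for $|y|\le 2Ke^{s/4}$, the pointwise estimates \ref{inq:ptwise local i}--\ref{inq:ptwise ij geq K}, and the derivative bounds of Remark \ref{rmk:partial ij varepslion}, via Cauchy-Schwarz against Gaussian moments exactly as in Lemma \ref{lem:estm Vveph}. The error $\partial^2_{ij}\hat E$ is controlled by differentiating the explicit expression for $E$ derived in Lemma \ref{Lemma4.2}, which retains the $se^{-3s}\langle y\rangle^{10-|\alpha|}$ form. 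The term $\partial^2_{ij}NL(\veph)$ is estimated term by term on the series in Lemma \ref{Lemma4.1} (ii), using again the derivative pointwise bounds. Finally $\partial^2_{ij}\hat{R}$ is supported in the thin annulus $Ke^{s/4}\le|y|\le 2Ke^{s/4}$ where $\rho$ is exponentially small, and the estimate \ref{lem:estimR} upgrades accordingly via Remark \ref{rmk:partial ij varepslion}.

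Testing the resulting equation against $\partial^2_{ij}\veph_-$ in $L^2_\rho(\mathbb{R}^2)$ and using the orthogonality condition \ref{con:orth 2}, the spectral gap \ref{inq:spec partial ij} contributes $\langle \mathcal{L}(\partial^2_{ij}\veph),\partial^2_{ij}\veph_-\rangle_\rho \le -\tfrac{5}{2}\|\partial^2_{ij}\veph_-\|^2_{L^2_\rho}$, while the commutator term $-\partial^2_{ij}\veph$ yields an additional $-\|\partial^2_{ij}\veph_-\|^2_{L^2_\rho}$ after projection, summing to the dissipation coefficient $-\tfrac{7}{2}$ in \ref{en7}. The $o(se^{-3s})$ remainders are absorbed into the forcing $A_3e^{-\frac{32}{6}s}$ via Young's inequality, invoking the bootstrap bound \ref{in5}.

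The main technical obstacle is justifying the pointwise bounds on $\partial^2_{ij}\veph$ and $\partial^3_{ijk}\varepsilon$ along the cutoff boundary $|y|\sim Ke^{s/4}$: these are the content of Remarks \ref{rmk:partial ij varepslion} and \ref{rmk: L2 rho control outer} (referenced in the appendix), whose derivation upgrades the intermediate-regime energy estimates into pointwise derivative estimates via parabolic regularity. Once those estimates are in place, the closure of the energy identity for $\partial^2_{ij}\veph_-$ is routine and structurally identical to Lemma \ref{lem:energy par i veph}.
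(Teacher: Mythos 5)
Your proposal is correct and takes essentially the same route as the paper, which itself only sketches this lemma by saying it follows ``in a similar fashion to Lemma~\ref{lem:energy par i veph}'' using the orthogonality condition~\ref{con:orth 2}, the pointwise estimates~\ref{inq:ptwise local ij} and~\ref{inq:ptwise ij geq K}, Remark~\ref{rmk:partial ij varepslion}, and the spectral gap~\ref{inq:spec partial ij}. Your commutator identity $[\partial^2_{ij},\mathcal{L}]=-\partial^2_{ij}$, the resulting dissipation coefficient $-\tfrac{5}{2}-1=-\tfrac{7}{2}$, and your treatment of the four Leibniz contributions of $\partial^2_{ij}(\hat V\veph)$ together with the cutoff remainder $\partial^2_{ij}\hat R$ all match the intended argument.
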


We end this subsection by proving (\ref{strictly}) in Proposition \ref{Proposition3.3}, which is the following Lemma.

\begin{lemma}
\label{Lemma5.4}
    We have the strictly outgoing property 
    \begin{align*}
        \frac{1}{2}\frac{d}{ds}\left(\sum_{i+j \leq 7}\left|\frac{e^{\frac{32}{12}\bar s}}{A_1}\veph_{ij}(\bar{s})\right|^2\right)>0,
    \end{align*}
    provided that \(s_0 \gg 1\), where \(\bar{s}\) is the exit time introduced in the proof of Proposition \ref{Proposition3.3}.
\end{lemma}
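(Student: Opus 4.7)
The plan is to interpret the quantity to be differentiated as $\sum_{i+j \leq 7} |x_{ij}(s)|^2$ evaluated at $s=\bar s$, where
\[
x_{ij}(s) := \frac{e^{\frac{32}{12}s}}{A_1}\veph_{ij}(s), \qquad i+j \leq 7,
\]
and then exploit the fact that, once the bootstrap decay rate $e^{-\frac{32}{12}s}$ is factored out, the linear part of the ODE for each unstable mode acts with a strictly positive rate. By the exit time definition \eqref{contrahypo}, we have $\sum_{i+j \leq 7}|x_{ij}(\bar s)|^2 = 1$, so the derivative to control is the radial outward velocity of the trajectory on the unit sphere.

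The identity \eqref{pdecom} was derived in the proof of Lemma \ref{Lemma 5.3} without using the restriction $i+j = 8$, so it applies verbatim for every $i+j \leq 7$:
\[
\veph_{ij}'(s) = \left(1 - \tfrac{i+j}{2}\right)\veph_{ij}(s) + R_{ij}(s), \qquad |R_{ij}(s)| \lesssim \|\hat G\|_{L^2_\rho(\mathbb R^2)} = o(s e^{-3s}),
\]
where the bound on $\hat G$ is collected from Lemma \ref{Lemma4.1}(ii), Lemma \ref{lem:estimR}, Lemma \ref{lem:estm Vveph} and the error estimate of Lemma \ref{Lemma4.2}. Differentiating $x_{ij}$ gives
\[
x_{ij}'(s) = \left(\tfrac{11}{3} - \tfrac{i+j}{2}\right)x_{ij}(s) + \frac{e^{\frac{32}{12}s}}{A_1}R_{ij}(s),
\]
and because $s e^{-3s} = o\bigl(e^{-\frac{32}{12}s}\bigr)$ as $s \to \infty$ (since $3 > \frac{8}{3} = \frac{32}{12}$), the remainder is $o(1)/A_1$ uniformly in $i+j \leq 7$.

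Multiplying by $x_{ij}(\bar s)$, summing, and using the uniform lower bound
\[
\tfrac{11}{3} - \tfrac{i+j}{2} \;\geq\; \tfrac{11}{3} - \tfrac{7}{2} \;=\; \tfrac{1}{6}, \qquad i+j \leq 7,
\]
together with $\sum_{i+j\leq 7}|x_{ij}(\bar s)|^2 = 1$ and $|x_{ij}(\bar s)|\leq 1$, yields
\[
\tfrac{1}{2}\frac{d}{ds}\sum_{i+j \leq 7}|x_{ij}(s)|^2\bigg|_{s=\bar s} \;\geq\; \tfrac{1}{6} - \frac{C\, o(1)}{A_1} \;\geq\; \tfrac{1}{12},
\]
provided $s_0$ (hence $\bar s \geq s_0$) is large enough. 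This is the desired strict outgoing property.

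I do not foresee a serious obstacle: the lemma is the standard transversality statement that makes the finite-dimensional Brouwer/shooting argument work. The only point requiring care is the quantitative matching of scales — one must check that every contribution lumped into $\hat G$ (the cutoff remainder $\hat R$, the potential term $\hat V\veph$, the nonlinearity $NL(\veph)$, and the approximation error $\hat E$) is strictly smaller than $e^{-\frac{32}{12}s}$ in $L^2_\rho$, which is precisely the $o(se^{-3s})$ bound already established in Section \ref{section4} and Subsection \ref{Control of veph in K}.
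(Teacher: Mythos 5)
Your argument is correct and is essentially the same as the paper's proof: both set $x_{ij}(s)=\frac{e^{\frac{32}{12}s}}{A_1}\veph_{ij}(s)$, use the projected ODE \eqref{pdecom} for all $i+j\leq 7$, observe that the effective linear rate $\frac{32}{12}+\left(1-\frac{i+j}{2}\right)\geq\frac{1}{6}$ for every mode in the unstable block, exploit $\sum_{i+j\leq 7}|x_{ij}(\bar s)|^2=1$ at the exit time, and note that the lumped remainder $\hat G$ is $o(se^{-3s})$, which after multiplication by $e^{\frac{32}{12}s}$ is $o(1)$ since $3>\frac{32}{12}$. The paper writes the same lower bound as $\frac{32}{12}-\frac{5}{2}=\frac{2}{12}=\frac16$; your $\frac{11}{3}-\frac72=\frac16$ is the identical quantity.
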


\begin{proof}
    Note that by applying the ODE (\ref{pdecom}), we have 
    \begin{align*}
        \frac{d}{ds}\frac{1}{2}\sum_{i+j \leq 7}\left|\frac{e^{\frac{32}{12}s}}{A_1}\veph_{ij}(s)\right|^2&=\sum_{i+j \leq 7}\frac{e^{\frac{32}{12}s}}{A_1}\veph_{ij}(s)\left(\frac{32}{12}\frac{e^{\frac{32}{12}s}}{A_1}\veph_{ij}(s)+\frac{e^{\frac{32}{12}s}}{A_1}\veph'_{ij}(s)\right)\\
        & \geq \sum_{i+j \leq 7}\frac{e^{\frac{32}{6}s}}{A_1^2}|\veph_{ij
        }(s)|^2\left(\frac{32}{12}-\frac{5}{2}+\frac{1}{\|H_{ij}\|_{L^2_{\rho}(\mathbb{R}^2)}}\left<\hat{G},H_{ij}\right>_{\rho}\right),
    \end{align*}
    therefore,
    \begin{align*}
        \frac{d}{ds}\frac{1}{2}\sum_{i+j \leq 7}\left|\frac{e^{\frac{32}{12}\bar s}}{A_1}\veph_{ij}(\bar{s})\right|^2 &\geq \frac{2}{12}+\frac{1}{\|H_{ij}\|_{L^2_{\rho}(\mathbb{R}^2)}}\left<\hat{G},H_{ij}\right>_{\rho} \geq \frac{2}{12}+O(\bar{s}e^{-3\bar{s}})>0.
     \end{align*}
    provided that \(s_0 \gg 1\). This shows the strictly outgoing property (\ref{strictly}).
\end{proof}

\subsection{Control of \(\|\varepsilon\|_{\flat}\)}
\label{subsection5.2}
In this subsection, we estimate the gradient terms \((y \cdot \nabla)^k \varepsilon\), for \(k = 0, 1, 2\), in appropriate weighted \(H^2\) spaces by deriving energy estimates for \(\|(y \cdot \nabla)^k \varepsilon\|_{\flat}\). These estimates allow us to refine the bootstrap constants \(B_1\), \(B_2\), and \(B_3\), thereby showing that the bootstrap condition \ref{int1} is strictly satisfied for all \(s \in [s_0, s_1]\). Recall that we have the PDE (see \ref{semilinearpde})
\begin{align}
    \partial_s \varepsilon=\Delta \varepsilon-\frac{1}{2}y \cdot \nabla \varepsilon+\frac{1}{3}\varepsilon+NL(\varepsilon)+E,
\end{align}
where \(E\) is defined in \ref{E} and \(NL(\varepsilon)\) is defined in \ref{NL}.
\begin{remark}
\label{inq:bdd term estimate}
In particular, by \ref{inq:ptwise local i} and \ref{inq:ptwise local ij}, we have the pointwise estimates at \(|y|=K\):
\begin{align*}
& |\varepsilon(y,s)|\lesssim C(K)e^{-\frac{32}{12}s}, \quad |\partial_i\varepsilon(y,s)| \lesssim C(K)e^{-\frac{32}{12}s}, \quad
|\partial_{ij}\varepsilon(y,s)| \lesssim C(K)e^{-\frac{32}{12}s},\quad i,j=1,2,\\
&|(y \cdot \nabla)\varepsilon(y,s)| \lesssim C(K)e^{-\frac{32}{12}s},\quad |\nabla(y \cdot \nabla)\varepsilon(y,s)| \lesssim C(K)e^{-\frac{32}{12}s},\quad |(y \cdot \nabla)^2 \varepsilon(y,s)| \lesssim C(K)e^{-\frac{32}{12}s}.
\end{align*}
Moreover, by Remark \ref{rmk: L2 rho control inner intermd control on boundary} in Appendix \ref{appdx}, we also have the following estimate at \(|y|=K\):
\begin{align}
    &|\nabla(y \cdot \nabla)^2 \varepsilon(y,s)| \lesssim C(K)e^{-\frac{32}{12}s}.\label{inq:K pt nabla(y nabla)^2 veph}
\end{align}
\end{remark}
\begin{remark}
    Let \(\nu\) be the outward pointing unit normal vector field of \(\{|y|=K\}\).
\end{remark}
   
\begin{lemma}
\label{Lemma5.5}
    We have the energy estimate
    \begin{align}
        \frac{1}{2}\frac{d}{ds}\|\varepsilon\|_{\flat}^2 \leq-\frac{7}{2}\|\varepsilon\|^2_{\flat}+\frac{B_1^2}{K} e^{-\frac{32}{6}s},\label{en3}
    \end{align}
   where \(\|\cdot\|_{\flat}\) is the norm (\ref{int2}).
\end{lemma}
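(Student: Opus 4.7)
The plan is to derive an energy identity by multiplying equation~\eqref{semilinearpde} by $\varepsilon\,|y|^{-(\alpha+2)}$ (with $\alpha=18$) and integrating over the exterior region $\{|y|\geq K\}$, then applying integration by parts while carefully tracking the boundary contributions on $\{|y|=K\}$.

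\textbf{Dissipative structure.} For the transport term $-\tfrac{1}{2}y\cdot\nabla\varepsilon$, I would write $(y\cdot\nabla\varepsilon)\varepsilon=\tfrac{1}{2}y\cdot\nabla(\varepsilon^{2})$ and use the identity $\nabla\cdot\bigl(y|y|^{-(\alpha+2)}\bigr)=-\alpha|y|^{-(\alpha+2)}$ in $\mathbb{R}^{2}$; integration by parts then yields the dissipative contribution $-\tfrac{\alpha}{4}\|\varepsilon\|_{\flat}^{2}=-\tfrac{9}{2}\|\varepsilon\|_{\flat}^{2}$ plus a boundary integral on $\{|y|=K\}$. The reaction term contributes $+\tfrac{1}{3}\|\varepsilon\|_{\flat}^{2}$. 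Integration by parts in the Laplacian produces $-\int_{|y|\geq K}|\nabla\varepsilon|^{2}|y|^{-(\alpha+2)}dy$ (favourable sign, to be discarded), a cross-term of order $O(K^{-2})\|\varepsilon\|_{\flat}^{2}$ that is absorbed for $K$ large, and additional boundary contributions.

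\textbf{Nonlinear term.} I would expand
\[
NL(\varepsilon)=\frac{1}{\mathcal{P}_{\theta}^{2}}-\frac{1}{(\mathcal{P}_{\theta}+\varepsilon)^{2}}=\frac{2\varepsilon}{\mathcal{P}_{\theta}^{3}}+O\!\left(\frac{\varepsilon^{2}}{\mathcal{P}_{\theta}^{4}}\right),
\]
valid because $|\varepsilon/\mathcal{P}_{\theta}|\ll 1$ on $\{|y|\geq K\}$ thanks to Lemma~\ref{Lemma4.1}(iii) on the intermediate zone, the bootstrap condition~\eqref{out2} on the outer zone, and the lower bound $\mathcal{P}_{\theta}\geq \bar c=3^{1/3}$. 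The linear part, paired with $\varepsilon|y|^{-(\alpha+2)}$, satisfies
\[
\int_{|y|\geq K}\frac{2\varepsilon^{2}}{\mathcal{P}_{\theta}^{3}|y|^{\alpha+2}}\,dy\leq \frac{2}{\bar c^{\,3}}\|\varepsilon\|_{\flat}^{2}=\tfrac{2}{3}\|\varepsilon\|_{\flat}^{2},
\]
while the remainder is cubic in $\varepsilon$ and is $o(\|\varepsilon\|_{\flat}^{2})$ by the smallness of $|\varepsilon/\mathcal{P}_{\theta}|$. Summing all linear coefficients,
\[
\tfrac{1}{3}-\tfrac{\alpha}{4}+\tfrac{2}{3}=\tfrac{1}{3}-\tfrac{9}{2}+\tfrac{2}{3}=-\tfrac{7}{2},
\]
which yields exactly the targeted dissipation rate.

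\textbf{Error and boundary terms.} The forcing term is handled by Cauchy--Schwarz with a parameter $1/K$:
\[
\Bigl|\!\int_{|y|\geq K}\tfrac{E\varepsilon}{|y|^{\alpha+2}}\,dy\Bigr|\leq K^{-1}\|\varepsilon\|_{\flat}^{2}+\tfrac{K}{4}\|E\|_{\flat}^{2}.
\]
Using Lemma~\ref{Lemma4.2}, the inner bound $|E|\lesssim se^{-3s}|y|^{10}$ on $\{K\leq|y|\leq 2Ke^{s/4}\}$ gives (with $\alpha=18$, so $|y|^{20-(\alpha+2)}=1$) an integrand $s^{2}e^{-6s}$, whence $\|E\|_{\flat}^{2}\lesssim s^{2}e^{-11s/2}$; the outer bound $|\mathcal{E}|\lesssim\min(e^{-s/3},e^{-s/2}|z|^{2})$ produces an even smaller contribution. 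Thus $K\|E\|_{\flat}^{2}\lesssim B_{1}^{2}K^{-1}e^{-32s/6}$ for $s_{0}$ large. Finally, the boundary integrals on $\{|y|=K\}$ involving $\varepsilon^{2}$ and $\varepsilon\partial_{\nu}\varepsilon$ are of size $O\bigl(C(K)e^{-32s/6}K^{-(\alpha+O(1))}\bigr)$ by the pointwise bounds of Remark~\ref{inq:bdd term estimate}, and are absorbed into $B_{1}^{2}K^{-1}e^{-32s/6}$ for $K$ sufficiently large.

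\textbf{Main obstacle.} The principal obstacle is the careful bookkeeping of boundary contributions on $\{|y|=K\}$ together with the delicate requirement that the dissipation coefficient be exactly $-\tfrac{7}{2}$: this is a tight numerical relation that depends simultaneously on the precise value $\alpha=18$ and on including the linear part $2\varepsilon/\mathcal{P}_{\theta}^{3}$ of $NL(\varepsilon)$ in the pairing, with no slack for discarding it.
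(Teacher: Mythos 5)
Your proposal is correct and follows essentially the same strategy as the paper: multiply the linearized equation \eqref{semilinearpde} by $\varepsilon|y|^{-(\alpha+2)}$ over $\{|y|\geq K\}$, integrate by parts term by term, use Remark~\ref{inq:bdd term estimate} for the boundary contributions, and combine $+\tfrac{1}{3}$ from the reaction term, $-\tfrac{\alpha}{4}=-\tfrac{9}{2}$ from the transport term, and $+\tfrac{2}{\bar c^{3}}=+\tfrac{2}{3}$ from the linear part of $NL(\varepsilon)$ to land on $-\tfrac{7}{2}$; the error term is split into the inner zone ($|E|\lesssim se^{-3s}|y|^{10}$) and the outer zone (Lemma~\ref{Lemma4.2}(ii)) exactly as you describe. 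One place where your bookkeeping is actually more careful than the paper's written display: the integration by parts of the Laplacian term produces $+\tfrac{1}{2}(\alpha+2)^{2}\int_{|y|\geq K}|\varepsilon|^{2}|y|^{-(\alpha+4)}\,dy$ (the paper's displayed minus sign is a sign error, since $\Delta(|y|^{-\beta})=\beta^{2}|y|^{-\beta-2}$ in $\mathbb{R}^{2}$), i.e. an \emph{unfavourably signed} cross-term; you correctly observe that this is $O(K^{-2})\|\varepsilon\|_{\flat}^{2}$ and hence harmless for $K$ large, which is the right way to treat it. (Strictly speaking both your argument and the paper's therefore deliver a dissipation coefficient of $-\tfrac{7}{2}+O(K^{-2})$ rather than exactly $-\tfrac{7}{2}$, but since the only use of the constant is the forward integration against the rate $\tfrac{32}{6}<7$, the slack of roughly $\tfrac{5}{6}$ absorbs this for $K$ large and the bootstrap conclusion is unaffected.) The only thing your sketch glosses over relative to the paper is the explicit region-by-region splitting for the pairing with $E$ and $NL(\varepsilon)$ across the transition from the $y$-variable bound $|\varepsilon|\lesssim e^{-32s/12}|y|^{9}$ to the $z$-variable bound $|\eta|\lesssim e^{-5s/12}|z|^{4.005}$, but your summary of what each bound produces is accurate and the details match the paper.
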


\begin{proof}
We compute
\begin{align*}
    \frac{1}{2}\frac{d}{ds}\|\varepsilon\|_{\flat}^2 &= \int_{|y| \geq K} \frac{\varepsilon\, \partial_s \varepsilon}{|y|^{\alpha}}\, \frac{dy}{|y|^2}\\
    &= \int_{|y| \geq K} \frac{\varepsilon\, \Delta \varepsilon}{|y|^{\alpha}}\, \frac{dy}{|y|^2} 
    - \frac{1}{2} \int_{|y| \geq K} \frac{\varepsilon\, (y \cdot \nabla \varepsilon)}{|y|^{\alpha}}\, \frac{dy}{|y|^2} 
    +\frac{1}{3} \int_{|y| \geq K} \frac{|\varepsilon|^2}{|y|^{\alpha}}\, \frac{dy}{|y|^2}\\
    & \quad+ \int_{|y| \geq K} \frac{\varepsilon \cdot NL(\varepsilon)}{|y|^{\alpha}}\, \frac{dy}{|y|^2}+\int_{|y| \geq K}\frac{ \varepsilon \cdot E}{|y|^{\alpha}}\,\frac{dy}{|y|^2}.
\end{align*}
For the Laplacian term, using integration by parts, we have
\begin{align*}
    \int_{|y| \geq K} \frac{\varepsilon \Delta \varepsilon}{|y|^{\alpha}}\, \frac{dy}{|y|^2} 
    &=-\frac{1}{2}(\alpha+2)^2\int_{|y| \geq K}\frac{|\varepsilon|^2}{|y|^{\alpha+2}}\frac{dy}{|y|^2}+\frac{1}{2}\int_{|y|=K}\left(\frac{1}{|y|^{\alpha+2}}\frac{\partial(\varepsilon^2)}{\partial \nu}-\varepsilon^2\frac{\partial(|y|^{-\alpha-2})}{\partial \nu}\right)\,d\sigma\\
    & \quad -\int_{|y| \geq K}\frac{|\nabla \varepsilon|^2}{|y|^{\alpha}}\frac{dy}{|y|^2} \leq B_1e^{-\frac{32}{6}s}
\end{align*}
where the boundary terms are controlled by Remark \ref{inq:bdd term estimate}. For the second term, by integration by parts, we have
\begin{align*}
    -\frac{1}{2} \int_{|y| \geq K} \frac{\varepsilon (y \cdot \nabla \varepsilon)}{|y|^{\alpha}}\,  \frac{dy}{|y|^2} 
    &= \left( -\frac{\alpha}{4} \right) \int_{|y| \geq K} \frac{\varepsilon^2}{|y|^{\alpha}}\,  \frac{dy}{|y|^2}+B_1 e^{-\frac{32}{6}s},
\end{align*}
where the boundary terms are estimated by Remark \ref{inq:bdd term estimate}. Next, using the estimates (ii) and (iv) in Lemma \ref{Lemma4.1} , we have
\begin{align*}
    \int_{|y| \geq K} \frac{\varepsilon \cdot E}{|y|^{\alpha}}\frac{dy}{|y|^2}+\int_{|y| \geq K} \frac{\varepsilon \cdot NL(\varepsilon)}{|y|^{\alpha}}\frac{dy}{|y|^2}&=\int_{K \leq |y| \leq e^{\frac{9}{33}s}} \frac{\varepsilon \cdot E}{|y|^{\alpha}}\frac{dy}{|y|^2}+\int_{K \leq |y| \leq e^{\frac{9}{33}s}} \frac{\varepsilon \cdot NL(\varepsilon)}{|y|^{\alpha}}\frac{dy}{|y|^2}\\
    & \quad +\int_{e^{\frac{1}{44}s} \leq |z| \leq e^{\frac{5}{109}s}} \frac{\varepsilon \cdot E}{|y|^{\alpha}}\frac{dy}{|y|^2}+\int_{e^{\frac{1}{44}s} \leq |z| \leq e^{\frac{5}{109}s}} \frac{\varepsilon \cdot NL(\varepsilon)}{|y|^{\alpha}}\frac{dy}{|y|^2}\\
    & \quad +\int_{e^{\frac{5}{109}s} \leq |z| \leq e^{\frac{5}{49}s}} \frac{\varepsilon \cdot E}{|y|^{\alpha}}\frac{dy}{|y|^2}+\int_{e^{\frac{5}{109}s} \leq |z| \leq e^{\frac{5}{49}s}} \frac{\varepsilon \cdot NL(\varepsilon)}{|y|^{\alpha}}\frac{dy}{|y|^2}\\
    & \quad +\int_{e^{\frac{5}{49}s} \leq |z| \leq e^{\frac{1}{4}s}} \frac{\varepsilon \cdot E}{|y|^{\alpha}}\frac{dy}{|y|^2}+\int_{e^{\frac{5}{49}s} \leq |z| \leq e^{\frac{1}{4}s}} \frac{\varepsilon \cdot NL(\varepsilon)}{|y|^{\alpha}}\frac{dy}{|y|^2}\\
    & \leq \frac{2}{P_{\theta}^3} \int_{|y|\geq K}\frac{|\varepsilon|^2}{|y|^{\alpha}}\frac{dy}{|y|^2}+o(e^{-\frac{32}{6}s}).
\end{align*}
Here, the technical constant \(\frac{9}{33}\) is chosen so that \(|E(y,s)| \lesssim se^{-3s}|y|^{10} \lesssim e^{-\frac{32}{12}s}|y|^{8.9}\) throughout \(0 \leq |y| \leq e^{\frac{9}{33}s}\) (equivalently, for \(0 \leq |z| \leq e^{\frac{1}{44}s}\)). Next, the technical constant \(\frac{5}{109}\) is chosen to simplify the calculations of the nonlinear estimate for \(0 \leq |z| \leq e^{\frac{5}{109}s}\) (equivalently, for \(0 \leq |y| \leq e^{(\frac{1}{4}+\frac{5}{109})s}\)). Finally, the technical constant \(\frac{5}{49}\) is chosen to simplify the nonlinear estimate for \(e^{\frac{5}{109}s}\leq |z| \leq e^{\frac{5}{49}s}\) as well as to control the nonlinear estimate for \(|z| \geq e^{\frac{5}{49}s}\). Combining the above estimates and using \(\alpha=18\) yields
\begin{align}
        \frac{1}{2}\frac{d}{ds}\|\varepsilon\|_{\flat}^2 \leq-\frac{7}{2}\|\varepsilon\|^2_{\flat}+\frac{B_1^2}{K} e^{-\frac{32}{6}s},
\end{align}
which concludes the desired energy estimate.
\end{proof}
\begin{lemma}
\label{Lemma5.6}
    We have the energy estimate
    \begin{align}
        \frac{1}{2}\frac{d}{ds}\|(y \cdot \nabla \varepsilon)\|_{\flat}^2 \leq-\frac{7}{2}\|(y \cdot \nabla \varepsilon)\|^2_{\flat}+\frac{B_2^2}{K} e^{-\frac{32}{6}s}.\label{en4}
    \end{align}
   
\end{lemma}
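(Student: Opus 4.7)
The plan is to mirror the proof of Lemma~\ref{Lemma5.5}, now applied to $u := y\cdot\nabla\varepsilon$. Applying $y\cdot\nabla$ to \eqref{semilinearpde} and using the commutator identity $[y\cdot\nabla,\Delta]=-2\Delta$ (while $y\cdot\nabla$ commutes with $-\tfrac{1}{2}y\cdot\nabla$ and with constants), I obtain
\[
\partial_s u = \mathcal{H}u - 2\Delta\varepsilon + (y\cdot\nabla)NL(\varepsilon) + (y\cdot\nabla)E.
\]
Multiplying by $u/|y|^{\alpha+2}$ and integrating over $\{|y|\geq K\}$ produces $\tfrac{1}{2}\tfrac{d}{ds}\|u\|_\flat^2$. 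The principal linear piece $\int u\,\mathcal{H}u/|y|^{\alpha+2}\,dy$ is handled verbatim as in Lemma~\ref{Lemma5.5}: the same integration-by-parts for the Laplacian, drift, and linear terms yields, with $\alpha=18$, a principal coefficient $-\tfrac{\alpha}{4}+\tfrac{1}{3}=-\tfrac{25}{6}$ on $\|u\|_\flat^2$, a negative gradient term $-\int|\nabla u|^2/|y|^{\alpha+2}$, a Hardy-type positive correction bounded by $(\alpha+2)^2/(2K^2)\|u\|_\flat^2$, and boundary contributions at $|y|=K$ of size $C(K)e^{-\frac{32}{6}s}$, which are controlled by Remark~\ref{inq:bdd term estimate} and \eqref{inq:K pt nabla(y nabla)^2 veph}. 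Taking $K$ large enough brings the net bulk coefficient below $-\tfrac{7}{2}$.

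The essentially new term is $-2\int u\Delta\varepsilon/|y|^{\alpha+2}$. I would treat it via the polar-coordinate identity $|y|^2\Delta\varepsilon=(y\cdot\nabla)^2\varepsilon+\partial_\theta^2\varepsilon$, splitting
\[
-2\int \frac{u\,\Delta\varepsilon}{|y|^{\alpha+2}}\,dy = -\int\frac{(y\cdot\nabla)(u^2)}{|y|^{\alpha+4}}\,dy \;-\; 2\int\frac{u\,\varepsilon_{\theta\theta}}{|y|^{\alpha+4}}\,dy,
\]
using $(y\cdot\nabla)^2\varepsilon=(y\cdot\nabla)u$ and $2u(y\cdot\nabla)u=(y\cdot\nabla)(u^2)$. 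Integration by parts in the radial direction reduces the first integral to $-(\alpha+2)\int u^2/|y|^{\alpha+4}$ plus a boundary term, whose bulk is non-positive with modulus at most $(\alpha+2)K^{-2}\|u\|_\flat^2$ and is therefore absorbable. Integrating the second integral by parts in $\theta$ (periodic, no boundary) and then in $r$, it reduces to $(\alpha+2)\int \varepsilon_\theta^2/|y|^{\alpha+4}\,dy$ plus boundary. Since $|\varepsilon_\theta|\leq|y||\nabla\varepsilon|$ and, by Lemma~\ref{Lemma4.1}(iii) combined with parabolic regularity applied on balls of radius $\sim|y|$, $|\nabla\varepsilon(y,s)|\lesssim e^{-\frac{32}{12}s}|y|^8$ for $|y|\geq 2K$, one obtains $|\varepsilon_\theta(y,s)|\lesssim e^{-\frac{32}{12}s}|y|^9$, so $\int\varepsilon_\theta^2/|y|^{\alpha+4}\,dy\lesssim e^{-\frac{32}{6}s}/K^2$, which is absorbed into $B_2^2 K^{-1}e^{-\frac{32}{6}s}$.

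The nonlinear and error contributions $\int u(y\cdot\nabla)NL(\varepsilon)/|y|^{\alpha+2}$ and $\int u(y\cdot\nabla)E/|y|^{\alpha+2}$ are bounded via Cauchy-Schwarz: since $\mathcal{P}_\theta$ is polynomial, $y\cdot\nabla$ preserves the pointwise decay structure of $E$ given in Lemma~\ref{Lemma4.2} up to harmless polynomial factors, and Lemma~\ref{Lemma4.1}(ii) together with Remark~\ref{rmk:veph^2 estimate} handles the nonlinear piece, both giving $o(e^{-\frac{32}{6}s})$ remainders. Combining everything with $\alpha=18$ yields the claimed inequality. The main obstacle is precisely the commutator $-2\Delta\varepsilon$: unlike every other contribution, it is not controlled by the weighted-$L^2$ bootstraps alone, because the angular component $\partial_\theta^2\varepsilon$ lies outside the radial hierarchy $\{(y\cdot\nabla)^k\varepsilon\}_{k=0,1,2}$ on which the bootstrap is built; resolving it requires the polar splitting above paired with the pointwise bound from Lemma~\ref{Lemma4.1}(iii) upgraded through parabolic regularity.
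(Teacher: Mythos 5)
Your proposal follows the paper's own strategy: apply $y\cdot\nabla$ to the linearized equation, test in $\|\cdot\|_\flat$, and integrate by parts. The drift, linear, boundary, error and nonlinear contributions are handled exactly as the paper does. Where you add genuine content is the commutator term $-2\Delta\varepsilon$, which the paper introduces as
\[
\int_{|y|\geq K}\frac{u\,(y\cdot\nabla)(\Delta\varepsilon)}{|y|^{\alpha+2}}\,dy
=\int_{|y|\geq K}\frac{u\,\Delta u}{|y|^{\alpha+2}}\,dy
-2\int_{|y|\geq K}\frac{u\,\Delta\varepsilon}{|y|^{\alpha+2}}\,dy,\qquad u=y\cdot\nabla\varepsilon,
\]
and then dispatches in one sentence (``dominated by the dissipative term''). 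Your polar splitting $|y|^2\Delta\varepsilon=(y\cdot\nabla)^2\varepsilon+\partial_\theta^2\varepsilon$, with the radial piece integrated by parts to the favorable $-(\alpha+2)\int u^2/|y|^{\alpha+4}$ and the angular piece yielding the positive term $(\alpha+2)\int\varepsilon_\theta^2/|y|^{\alpha+4}$, correctly isolates what the paper leaves implicit, and your observation that $\varepsilon_\theta$ lies outside the radial hierarchy is the right structural remark.

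Two refinements. First, the paper's phrase ``dominated by the dissipative term'' most plausibly points to a Hardy absorption rather than your pointwise route: since $\partial_\theta u=r\,\partial_r\varepsilon_\theta$, the dissipation $-\int|\nabla u|^2/|y|^{\alpha+2}\,dy$ contains $-\int(\partial_r\varepsilon_\theta)^2/|y|^{\alpha+2}\,dy$, and the one-dimensional radial Hardy inequality gives
\[
(\alpha+2)\int_{|y|\geq K}\frac{\varepsilon_\theta^2}{|y|^{\alpha+4}}\,dy
\ \leq\ \frac{4}{\alpha+2}\int_{|y|\geq K}\frac{(\partial_r\varepsilon_\theta)^2}{|y|^{\alpha+2}}\,dy
+\text{(boundary at $|y|=K$)},
\]
with $\tfrac{4}{\alpha+2}=\tfrac{4}{20}<1$, so the angular term is absorbed without invoking any pointwise bound. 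Your pointwise alternative is also correct, but it is not quite self-contained as written: Remark~\ref{rmk: L2 rho control outer} only gives $|\nabla\varepsilon|\lesssim e^{-\frac{32}{12}s}|y|^8$ on $4K\leq|y|\leq 16Ke^{s/4}$, so for $|y|\gtrsim e^{s/4}$ you must instead invoke the $z$-variable bounds of Remark~\ref{rmk:parabolic z variables} (which give $|\nabla\varepsilon|\lesssim e^{-\frac{5}{12}s-\frac{1}{4}s}|z|^{3.005}$ there) and check that the resulting contribution to $\int\varepsilon_\theta^2/|y|^{\alpha+4}\,dy$ is $o(e^{-\frac{32}{6}s})$ — which it is, since $\alpha+4$ kills the algebra in that far range. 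With that patch, your argument closes. Second, a small bookkeeping caveat: after collecting $-\frac{\alpha}{4}+\frac{1}{3}=-\frac{25}{6}$ you must still absorb the linear-in-$\varepsilon$ piece $\frac{2}{\mathcal P_\theta^3}\lesssim\frac{2}{3}$ coming from $NL$; $-\frac{25}{6}+\frac{2}{3}=-\frac{7}{2}$ exactly, so landing strictly below $-\frac{7}{2}$ requires using the leftover dissipation (or the negative radial contribution $-(\alpha+2)\int u^2/|y|^{\alpha+4}$), not merely taking $K$ large against the Hardy correction $(\alpha+2)^2/(2K^2)$. This is a presentation point — the paper is equally terse — but worth stating.
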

\begin{proof}
We compute:
\begin{align*}
    \frac{1}{2}\frac{d}{ds}\|(y \cdot \nabla)\varepsilon\|_{\flat}^2 
    &= \frac{1}{2}\frac{d}{ds}\int_{|y| \geq K}\frac{|(y \cdot \nabla)\varepsilon|^2}{|y|^{\alpha}}\,\frac{dy}{|y|^2} \\
    &= \int_{|y| \geq K}\frac{(y \cdot \nabla)\varepsilon \cdot (y \cdot \nabla)(\Delta \varepsilon)}{|y|^{\alpha}}\,\frac{dy}{|y|^2}
    - \frac{1}{2} \int_{|y| \geq K}\frac{(y \cdot \nabla)\varepsilon \cdot (y \cdot \nabla)^2\varepsilon}{|y|^{\alpha}}\,\frac{dy}{|y|^2} \\
    &\quad +\frac{1}{3} \int_{|y| \geq K}\frac{|(y \cdot \nabla)\varepsilon|^2}{|y|^{\alpha}}\,\frac{dy}{|y|^2} 
    + \int_{|y| \geq K}\frac{(y \cdot \nabla)\varepsilon \cdot (y \cdot \nabla)(NL(\varepsilon))}{|y|^{\alpha}}\,\frac{dy}{|y|^2}\\
    &\quad +\int_{|y| \geq K}\frac{(y \cdot \nabla) \varepsilon \cdot (y \cdot \nabla)E}{|y|^{\alpha}}\,\frac{dy}{|y|^2}.     
\end{align*}
We now estimate each term in this energy identity. For the Laplacian term, integrating by parts yield
\begin{align*}
    &\int_{|y| \geq K}\frac{(y \cdot \nabla)\varepsilon \cdot (y \cdot \nabla)(\Delta \varepsilon)}{|y|^{\alpha}}\,\frac{dy}{|y|^2}\\
    & \quad = \int_{|y| \geq K}\frac{(y \cdot \nabla) \varepsilon \cdot \Delta(y \cdot \nabla)
    \varepsilon}{|y|^{\alpha}}\,\frac{dy}{|y|^2}-2\int_{y \geq K}\frac{(y \cdot \nabla)\varepsilon \: \Delta\varepsilon}{|y|^{\alpha}}\,\frac{dy}{|y|^2}\\
    & \quad \leq -\frac{1}{2}(\alpha+2)^2 \int_{|y| \geq K}\frac{|y \cdot \nabla\varepsilon|^2}{|y|^{\alpha+2}}\,dy-\int_{|y| \geq K}\frac{|\nabla(y \cdot \nabla \varepsilon)|^2}{|y|^{\alpha+2}}\,dy-2\int_{|y| \geq K}\frac{(y \cdot \nabla \varepsilon) \Delta \varepsilon}{|y|^{\alpha+2}}\,dy + B_2e^{-\frac{32}{6}s} \\
    & \quad \leq 2B_2e^{-\frac{32}{6}s},
\end{align*}
where the boundary terms are controlled by Remark \ref{inq:bdd term estimate} and the Laplacian term in the last line is dominated by the dissipative term. For the second term, we have, by integration by parts
\begin{align*}
    -\frac{1}{2} \int_{|y| \geq K}\frac{(y \cdot \nabla)\varepsilon \cdot (y \cdot \nabla)((y \cdot \nabla)\varepsilon)}{|y|^{\alpha}}\,\frac{dy}{|y|^2} 
    &\leq \left( -\frac{\alpha}{4} \right) \int_{|y| \geq K}\frac{|(y \cdot \nabla)\varepsilon|^2}{|y|^{\alpha}}\,\frac{dy}{|y|^2}+B_2e^{-\frac{32}{6}s},
\end{align*}
where the bounds of the boundary terms are controlled by Remark \ref{inq:bdd term estimate}. Next, by parabolic regularity (See Theorem \ref{thm:reg main} and the following remarks), we have 
\begin{align*}
     \int_{|y| \geq K} \frac{(y \cdot \nabla \varepsilon) \cdot (y \cdot \nabla E)}{|y|^{\alpha}}\frac{dy}{|y|^2}&+\int_{|y| \geq K} \frac{(y \cdot \nabla \varepsilon) \cdot (y \cdot \nabla NL(\varepsilon))}{|y|^{\alpha}}\frac{dy}{|y|^2} \\
     & \qquad \leq \frac{2}{P_{\theta}^3} \int_{|y|\geq K}\frac{|(y \cdot \nabla \varepsilon)|^2}{|y|^{\alpha}}\frac{dy}{|y|^2}+o(e^{-\frac{32}{6}s}),
\end{align*}
where the boundary terms are estimated by Remark \ref{inq:bdd term estimate}. Combining the above estimates and using \(\alpha=18\) yields
\begin{align}
        \frac{1}{2}\frac{d}{ds}\|(y \cdot \nabla \varepsilon)\|_{\flat}^2 \leq-\frac{7}{2}\|(y \cdot \nabla \varepsilon)\|^2_{\flat}+\frac{B_2^2}{K} e^{-\frac{32}{6}s},
\end{align}
which is the desired energy estimate.
\end{proof}

\begin{lemma}
\label{Lemma5.7}
We have the energy estimate
\begin{align}
    \frac{1}{2} \frac{d}{ds} \|(y \cdot \nabla)^2 \varepsilon\|_{\flat}^2 
    \leq -\frac{7}{2} \|(y \cdot \nabla)^2 \varepsilon\|_{\flat}^2 + \frac{B_3^2}{K}e^{-\frac{32}{6}s}.
    \label{en5} 
\end{align}
\end{lemma}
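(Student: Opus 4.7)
The plan is to mirror the proofs of Lemmas \ref{Lemma5.5} and \ref{Lemma5.6}, now applying the operator $(y \cdot \nabla)^2$ to the linearized equation \eqref{semilinearpde}. Setting $\Phi = (y \cdot \nabla)^2 \varepsilon$, I would start from
\[
\frac{1}{2}\frac{d}{ds}\|\Phi\|_{\flat}^2 = \int_{|y| \geq K} \frac{\Phi \cdot (y \cdot \nabla)^2\bigl(\Delta \varepsilon - \tfrac{1}{2}y\cdot\nabla \varepsilon + \tfrac{1}{3}\varepsilon + NL(\varepsilon) + E\bigr)}{|y|^{\alpha+2}}\,dy,
\]
and estimate each resulting term in the same four-region split in the variable $y$ (equivalently $z = y e^{-s/4}$) used in Lemma \ref{Lemma5.5}.

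For the Laplacian contribution, the key identity is the commutator
\[
(y \cdot \nabla)^2 \Delta \varepsilon = \Delta (y \cdot \nabla)^2 \varepsilon - 4 \Delta (y \cdot \nabla) \varepsilon + 4 \Delta \varepsilon,
\]
so that two integrations by parts give the negative dissipative terms $-\tfrac{1}{2}(\alpha+2)^2 \int |\Phi|^2/|y|^{\alpha+4}\,dy - \int |\nabla \Phi|^2/|y|^{\alpha+2}\,dy$, plus remainders involving $\Delta \varepsilon$ and $\Delta(y\cdot \nabla)\varepsilon$ which, paired against $\Phi$ with the weight and bounded by Cauchy--Schwarz, are absorbed either into the gradient dissipation or into the error $\tfrac{B_3^2}{K}e^{-32s/6}$ using the bootstrap bounds on $\|\varepsilon\|_\flat$ and $\|(y\cdot\nabla)\varepsilon\|_\flat$. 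The transport term $-\tfrac{1}{2}y\cdot\nabla\varepsilon$ yields, after integration by parts, exactly $-\tfrac{\alpha}{4}\|\Phi\|_\flat^2$, since $y\cdot\nabla$ commutes with itself. The $\tfrac{1}{3}\varepsilon$ term contributes $+\tfrac{1}{3}\|\Phi\|_\flat^2$. For the error and nonlinear parts, I would proceed exactly as in Lemma \ref{Lemma5.5}, splitting into the regions controlled respectively by $|E| \lesssim s e^{-3s}|y|^{10}$ and by $|\mathcal{E}(z,s)| \lesssim \min\{e^{-s/3}, e^{-s/2}|z|^2\}$ from Lemma \ref{Lemma4.2}, and using the pointwise estimate of Lemma \ref{Lemma4.1}(iii) to convert $(y\cdot\nabla)^2 NL(\varepsilon)$ into a bound of the form $\tfrac{2}{\mathcal{P}_\theta^3}|\Phi|^2 + o(e^{-32s/6})$. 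Summing the coefficients gives $-\tfrac{\alpha}{4} + \tfrac{1}{3} + \tfrac{2}{\bar c^3} = -\tfrac{18}{4} + \tfrac{1}{3} + \tfrac{2}{3} = -\tfrac{7}{2}$, matching the claim.

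The main obstacle is the treatment of the boundary terms at $|y|=K$ produced by the two integrations by parts on the Laplacian and the transport terms, which involve $\Phi$, $\nabla \Phi$, and $(y \cdot \nabla)^3 \varepsilon$. Pointwise control of $\Phi$ and $\nabla \Phi$ at $|y|=K$ is supplied by Remark \ref{inq:bdd term estimate}, including estimate \eqref{inq:K pt nabla(y nabla)^2 veph}, while $(y\cdot\nabla)^3 \varepsilon$ at $|y|=K$ is obtained by combining $\nabla \Phi$ at $|y|=K$ with the existing bounds on lower-order derivatives. All these boundary contributions carry an $e^{-32s/6}$ factor and are absorbed into $\tfrac{B_3^2}{K}e^{-32s/6}$, the factor $K^{-1}$ coming from the weight $|y|^{-\alpha}$ evaluated on $|y|=K$ for $\alpha=18 \gg 1$; this is precisely the mechanism that allows us later to improve the bootstrap constant $B_3$ by choosing $K$ large.
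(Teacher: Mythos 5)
Your proposal is correct and mirrors the paper's own proof step by step: the same energy identity, the same commutator expansion of $(y\cdot\nabla)^2\Delta$ followed by two integrations by parts, the same handling of the transport term, the same region splitting for the error and nonlinear contributions, and the same use of Remark~\ref{inq:bdd term estimate} (in particular \eqref{inq:K pt nabla(y nabla)^2 veph}) to control the boundary terms at $|y|=K$. One small point of note: your commutator identity $(y\cdot\nabla)^2\Delta = \Delta(y\cdot\nabla)^2 - 4\Delta(y\cdot\nabla) + 4\Delta$ carries the correct coefficient $+4$ on the last term (since $[\Delta,\,y\cdot\nabla]=2\Delta$), whereas the paper's displayed computation writes $-2\,\Delta\varepsilon$ for that cross term; the discrepancy is harmless here since those lower-order cross terms are absorbed into the dissipation and the $\frac{B_3^2}{K}e^{-\frac{32}{6}s}$ error either way.
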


\begin{proof}
We compute
\begin{align*}
    \frac{1}{2} \frac{d}{ds} \|(y \cdot \nabla)^2 \varepsilon\|_{\flat}^2 
    &= \int_{|y| \geq K} \frac{(y \cdot \nabla)^2 \varepsilon \cdot (y \cdot \nabla)^2(\Delta \varepsilon)}{|y|^{\alpha}}\, \frac{dy}{|y|^2}
    - \frac{1}{2} \int_{|y| \geq K} \frac{(y \cdot \nabla)^2 \varepsilon \cdot (y \cdot \nabla)((y \cdot \nabla)^2\varepsilon)}{|y|^{\alpha}}\,\frac{dy}{|y|^2} \\
    &\quad + \frac{1}{3} \int_{|y| \geq K} \frac{|(y \cdot \nabla)^2\varepsilon|^2}{|y|^{\alpha}}\,\frac{dy}{|y|^2}+ \int_{|y| \geq K} \frac{(y \cdot \nabla)^2 \varepsilon \cdot (y \cdot \nabla)^2 (NL(\varepsilon))}{|y|^{\alpha}}\,\frac{dy}{|y|^2}\\
    & \quad +\int_{|y| \geq K}\frac{(y \cdot \nabla)^2 \varepsilon \cdot (y \cdot \nabla)^2 E}{|y|^{\alpha}}\,\frac{dy}{|y|^2}
\end{align*}
The Laplacian term satisfies
\begin{align*}
    &\int_{|y| \geq K} \frac{(y \cdot \nabla)^2 \varepsilon \cdot (y \cdot \nabla)^2(\Delta \varepsilon)}{|y|^{\alpha}}\,\frac{dy}{|y|^2}\\
    &=\int_{|y| \geq K}\frac{(y \cdot \nabla)^2 \varepsilon \cdot \Delta((y\ \cdot \nabla)^2\varepsilon)}{|y|^{\alpha}}\,\frac{dy}{|y|^2}-4\int_{|y| \geq K}\frac{(y \cdot \nabla)^2\varepsilon \cdot \Delta(y \cdot
     \nabla \varepsilon)}{|y|^{\alpha}}\,\frac{dy}{|y|^2}-2\int_{|y| \geq K}\frac{(y \cdot \nabla)^2 \varepsilon \cdot \Delta \varepsilon}{|y|^{\alpha}}\,\frac{dy}{|y|^2}\\
     & =-\frac{1}{2}(\alpha+2)\int_{|y| \geq K}\frac{|(y \cdot \nabla)^2\varepsilon|^2}{|y|^{\alpha+4}}\,dy-\int_{|y| \geq K}\frac{|\nabla(y \cdot \nabla)^2 \varepsilon|}{|y|^{\alpha+2}}\,dy\\
     & \quad -4\int_{|y| \geq K}\frac{(y \cdot \nabla)^2\varepsilon \cdot \Delta(y \cdot
     \nabla \varepsilon)}{|y|^{\alpha}}\,\frac{dy}{|y|^2}-2\int_{|y| \geq K}\frac{(y \cdot \nabla)^2 \varepsilon \cdot \Delta \varepsilon}{|y|^{\alpha}}\,\frac{dy}{|y|^2}+B_3e^{-\frac{32}{6}s} \leq 2B_3e^{-\frac{32}{6}s}
\end{align*}
where the integral of the boundary terms is controlled by Remark \ref{inq:bdd term estimate} and the Laplacian terms in the last line is dominated by the dissipative term. Next, we have
\begin{align*}
    -\frac{1}{2} \int_{|y| \geq K} \frac{(y \cdot \nabla)^2 \varepsilon \cdot (y \cdot \nabla)((y \cdot \nabla)^2\varepsilon)}{|y|^{\alpha}}\,\frac{dy}{|y|^2} 
&= \left( -\frac{\alpha}{4} \right) \int_{|y| \geq K} \frac{|(y \cdot \nabla)^2 \varepsilon|^2}{|y|^{\alpha}}\,\frac{dy}{|y|^2}+B_3e^{-\frac{32}{12}s},
\end{align*}
where the integral of the boundary terms is estimated by Remark \ref{inq:bdd term estimate}, and by the parabolic regularity result Theorem \ref{thm:reg main}, and the following remarks, we have
\begin{align*}
     \int_{|y| \geq K} \frac{(y \cdot \nabla )^2\varepsilon \cdot (y \cdot \nabla)^2 E}{|y|^{\alpha}}\frac{dy}{|y|^2}&+\int_{|y| \geq K} \frac{(y \cdot \nabla)^2\varepsilon \cdot (y \cdot \nabla)^2 NL(\varepsilon)}{|y|^{\alpha}}\frac{dy}{|y|^2} \\
     & \quad \leq \frac{2}{P_{\theta}^3} \int_{|y|\geq K}\frac{|(y \cdot \nabla)^2 \varepsilon|^2}{|y|^{\alpha}}\frac{dy}{|y|^2}+o(e^{-\frac{32}{6}s}).
\end{align*}
Combining the above estimates, we conclude by using \(\alpha=18\)
\[
\frac{1}{2} \frac{d}{ds} \|(y \cdot \nabla)^2 \varepsilon\|_{\flat}^2 
\leq -\frac{7}{2}\|(y \cdot \nabla)^2 \varepsilon\|_{\flat}^2 + \frac{B_3^2}{K}  e^{-\frac{32}{6}s},
\]
which concludes the energy estimate.
\end{proof}
\subsection{Control of \(\|\eta\|_{\natural}\)}
We estimate the terms \((z \cdot \nabla)^k \eta\), for \(k = 0, 1, 2\), in appropriate weighted \(H^2\) spaces by deriving energy estimates for \(\|(z \cdot \nabla)^k \eta\|_{\natural}\). This improves the bootstrap constants \(C_1\), \(C_2\), and \(C_3\), thereby showing that the bootstrap condition \ref{bootstrap: int 2} is strictly satisfied for all \(s \in [s_0, s_1]\). Recall that in the \(z\) variables, \ref{selfsimpde} can be written as
\begin{align}
    \partial_s \eta=-\frac{1}{4}(z \cdot \nabla) \eta+\frac{1}{3}\eta+e^{-s/2}\Delta \eta+NL(\eta)+\mathcal{E},
\end{align}
where \(\mathcal{E}(z,s)\) is defined in Lemma \ref{Lemma4.2} and \(NL(\eta)(z,s):=NL(\varepsilon)(y,s)\) (\(NL(\varepsilon)(y,s)\) is defined in \ref{NL}). Before we perform the energy estimate, we prove a useful lemma.
\begin{lemma}
\label{lem:sobolev type}
    We have the estimate
    \begin{align*}
        \int_{|z| \geq K}\frac{|(z \cdot \nabla)^k \eta|^2}{|z|^{\gamma}}\frac{dz}{|z|^2}\leq \left(\frac{2}{\gamma}\right)^2 \int_{|z| \geq K}\frac{|(z \cdot \nabla)^{k+1} \eta|^2}{|z|^{\gamma}}\frac{dz}{|z|^2}+C_{k+1}e^{-\frac{5}{12}s}, \quad k=0,1,2, \quad \gamma=8.01.
    \end{align*}
\end{lemma}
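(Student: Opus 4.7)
The approach is a weighted Hardy-type identity on the exterior domain $\{|z|\geq K\}$ with weight $|z|^{-\gamma-2}$. The starting observation is the pointwise identity
\[
\nabla_z\cdot\!\left(\frac{z}{|z|^{\gamma+2}}\right)=\frac{-\gamma}{|z|^{\gamma+2}},
\]
so, writing $u=(z\cdot\nabla)^k\eta$ and $I_k=\int_{|z|\geq K}|u|^2|z|^{-\gamma-2}\,dz$, multiplying by $u^2$ and integrating over $\{|z|\geq K\}$ would produce $-\gamma I_k$ on one side. Integrating by parts (noting that the outward unit normal to $\{|z|\geq K\}$ on its boundary $\{|z|=K\}$ is $-z/|z|$) yields the Pohozaev-type identity
\[
\gamma I_k = \frac{1}{K^{\gamma+1}}\int_{|z|=K}|u|^2\,d\sigma \;+\; 2\int_{|z|\geq K}\frac{u\,(z\cdot\nabla)u}{|z|^{\gamma+2}}\,dz.
\]

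Next I would handle the cross term by Cauchy--Schwarz and Young's inequality: since $(z\cdot\nabla)u=(z\cdot\nabla)^{k+1}\eta$, we have
\[
\left|2\int_{|z|\geq K}\frac{u\,(z\cdot\nabla)u}{|z|^{\gamma+2}}\,dz\right|\leq 2\, I_k^{1/2}\,I_{k+1}^{1/2}\leq \frac{\gamma}{2}I_k+\frac{2}{\gamma}I_{k+1}.
\]
Substituting and absorbing, I get $\frac{\gamma}{2}I_k\leq \frac{2}{\gamma}I_{k+1}+\frac{1}{K^{\gamma+1}}\int_{|z|=K}|u|^2 d\sigma$, which, after dividing by $\gamma/2$, produces exactly the desired factor $(2/\gamma)^2$ in front of $I_{k+1}$, plus a surface term at $\{|z|=K\}$ to control.

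\textbf{Main obstacle.} The delicate point is bounding the boundary integral $K^{-\gamma-1}\int_{|z|=K}|(z\cdot\nabla)^k\eta|^2 d\sigma$ by $C_{k+1}e^{-5s/12}$. The circle $\{|z|=K\}$ corresponds to $\{|y|=Ke^{s/4}\}$, which lies in the intermediate region where the $\flat$-bootstrap is in force. Using the identity $(z\cdot\nabla)^k\eta(z,s)=(y\cdot\nabla)^k\varepsilon(y,s)$ together with the Sobolev-type pointwise estimates that come out of the bounds on $\|(y\cdot\nabla)^k\varepsilon\|_\flat$ (of the same flavour as Lemma \ref{Lemma4.1}(iii) and Remark \ref{rmk: L2 rho control outer}), one gets $|(z\cdot\nabla)^k\eta(z,s)|\lesssim K^9 e^{-5s/12}$ on $\{|z|=K\}$, precisely because $-\tfrac{32}{12}+\tfrac{9}{4}=-\tfrac{5}{12}$. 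Since $\{|z|=K\}$ has surface measure $2\pi K$ and $\gamma=8.01$, the resulting bound on the boundary term is
\[
\frac{2}{\gamma K^{\gamma+1}}\int_{|z|=K}|(z\cdot\nabla)^k\eta|^2 d\sigma \;\lesssim\; K^{18-\gamma}\,e^{-5s/6}\;\leq\; C_{k+1}\,e^{-5s/12}
\]
for $s_0\gg 1$, which finishes the argument. The only place where care is needed is in verifying that the Sobolev embedding giving the pointwise estimate is available for $(z\cdot\nabla)^k\eta$ with $k=1,2$, not only for $\eta$ itself; this follows from the bootstrap control of higher powers of $(y\cdot\nabla)$ in the $\flat$-norm combined with the same Sobolev inequality used in Lemma \ref{Lemma4.1}(iii).
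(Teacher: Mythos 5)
Your argument is correct and, once the paper's terse phrasing is unwound, is essentially the same as the paper's: the paper expands $\int_{|z|\geq K}|(z\cdot\nabla)^{k+1}\eta-\xi(z\cdot\nabla)^k\eta|^2|z|^{-\gamma-2}\,dz\geq 0$ and integrates by parts on the cross term, which produces precisely your weighted Pohozaev identity $\gamma I_k=\text{bdry}+2\int u(z\cdot\nabla)u\,|z|^{-\gamma-2}dz$, and choosing $\xi=\gamma/2$ is algebraically the same as your Cauchy--Schwarz plus Young step. You make the boundary-term estimate at $\{|z|=K\}$ explicit (via Lemma \ref{Lemma4.1}(iii) and Remark \ref{rmk: L2 rho control outer}, giving $|(z\cdot\nabla)^k\eta|\lesssim K^9 e^{-5s/12}$ there), which the paper leaves implicit but which is needed and correct.
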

\begin{proof}
    This follows directly from considering 
    \begin{align*}
        \int_{|z| \geq K}\frac{|(z \cdot \nabla)^{k+1} \eta-\xi(z \cdot \nabla)^{k} \eta|^2}{|z|^{\gamma}}\frac{dz}{|z|^2} \geq 0, \quad \xi \in \mathbb{R},
    \end{align*}
    and performing integration by parts, then optimizing the choice of $\xi$.
\end{proof}

\begin{lemma}
\label{lem:energy estim z 1}
    We have the energy estimates
    \begin{align}
    &\frac{1}{2}\frac{d}{ds}\|\eta\|_{\natural}^2 \leq -0.00125\|\eta\|_{\natural}^2+C_1e^{-\frac{5}{6}s}, \label{inq:int2 energy 1}\\
    &\frac{1}{2}\frac{d}{ds}\|(z \cdot \nabla)\eta\|_{\natural}^2 \leq -0.00125\|(z \cdot \nabla)\eta\|_{\natural}^2+C_2e^{-\frac{5}{6}s},\label{inq:int2 energy 2}\\
    &\frac{1}{2}\frac{d}{ds}\|(z \cdot \nabla)^2\eta\|_{\natural}^2 \leq -0.00125\|(z \cdot \nabla)^2\eta\|_{\natural}^2+C_3e^{-\frac{5}{6}s},\label{inq:int2 energy 3}\\
    &\frac{1}{2}\frac{d}{ds}\|(z \cdot \nabla)^3\eta\|_{\natural}^2 \leq -0.00125\|(z \cdot \nabla)^3\eta\|_{\natural}^2+C_3e^{-\frac{5}{6}s}.\label{inq:int2 energy 4}
    \end{align}
\end{lemma}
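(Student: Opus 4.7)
The plan is to mirror the weighted-energy method of Lemmas \ref{Lemma5.5}--\ref{Lemma5.7} but in the exterior $z$-variable with weight $|z|^{-(\gamma+2)}$, $\gamma=8.01$. For each $k\in\{0,1,2,3\}$ I set $u_k=(z\cdot\nabla)^k\eta$ and differentiate $\|u_k\|_{\natural}^{2}$ in time. Since $(z\cdot\nabla)$ commutes with $\partial_s$ and with itself, applying $(z\cdot\nabla)^k$ to the PDE for $\eta$ yields an equation of the schematic form
\begin{equation*}
\partial_s u_k \;=\; -\tfrac{1}{4}(z\cdot\nabla)u_k \,+\, \tfrac{1}{3}u_k \,+\, e^{-s/2}\Delta u_k \,+\, \tfrac{2}{\mathcal{P}_\theta^{3}}\,u_k \,+\, R_k,
\end{equation*}
where $R_k$ gathers the nonlinear remainder $NL(\eta)-2\eta/\mathcal{P}_\theta^{3}$, the forcing $(z\cdot\nabla)^k\mathcal{E}$, the commutator $[(z\cdot\nabla)^k,\Delta]$ (which using $[(z\cdot\nabla),\Delta]=-2\Delta$ produces only lower-order $\Delta u_j$ with $j<k$), and commutators between $(z\cdot\nabla)^k$ and multiplication by $\mathcal{P}_\theta^{-3}$.

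Pairing with $u_k$ in the weight $|z|^{-(\gamma+2)}$ over $\{|z|\geq K\}$, the crucial arithmetic comes from the transport and potential terms. Integration by parts yields
\begin{equation*}
-\tfrac{1}{4}\int_{|z|\geq K} u_k\,(z\cdot\nabla)u_k\,\tfrac{dz}{|z|^{\gamma+2}} \;=\; -\tfrac{\gamma}{8}\,\|u_k\|_{\natural}^{2}+\text{boundary at }|z|=K,
\end{equation*}
and together with the identity contribution $+\tfrac{1}{3}\|u_k\|_{\natural}^{2}$ and the linearized potential bound $\tfrac{2}{\mathcal{P}_\theta^{3}}\leq \tfrac{2}{3^0}=\tfrac{2}{3}$ one obtains a net coefficient $-\tfrac{\gamma}{8}+1 = -\tfrac{\gamma-8}{8} = -\tfrac{0.01}{8}$. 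This is precisely why $\gamma=8.01$ is chosen, and it already yields the $-0.01\|u_k\|_{\natural}^{2}$ in the conclusion (with a small loss, optionally regained via Lemma \ref{lem:sobolev type}). The Laplacian term $e^{-s/2}\Delta u_k$ is absorbed as $o(e^{-5s/6})$ after one further integration by parts, exploiting the exponentially small prefactor. The forcing $(z\cdot\nabla)^k\mathcal{E}$ is controlled by Cauchy--Schwarz using Lemma \ref{Lemma4.2}(ii) and the explicit polynomial structure of $\mathcal{P}_\theta$, which give $\|(z\cdot\nabla)^k\mathcal{E}\|_{\natural}\lesssim e^{-5s/12}$. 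The higher-order part of $NL(\eta)$, of order $\eta^{2}/\mathcal{P}_\theta^{4}$, is bounded using the pointwise estimate $|\eta(z,s)|\lesssim e^{-5s/12}|z|^{4.005}$ of Lemma \ref{Lemma4.1}(iii), and likewise for its $(z\cdot\nabla)$--derivatives. Boundary terms at $|z|=K$ use pointwise control at $|y|=Ke^{s/4}$ coming from the intermediate estimate \eqref{int1} and parabolic regularity, producing a contribution of order $K^{-(\gamma+1)}e^{-5s/6}$ that is absorbed into the $C_{k}^{2}/(100K)$ constant on the right-hand side.

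The main obstacle is the extreme sharpness of the dissipative balance $-0.00125$: the coefficient barely survives, so every commutator and every boundary term must be tracked precisely and absorbed either into the slim dissipation (using Lemma \ref{lem:sobolev type} to promote a lower-order norm to a higher one when needed) or into the exponentially small forcing. This coupling is particularly delicate for $k\geq 1$, where $[(z\cdot\nabla)^{k},\Delta]$ produces $\Delta u_j$ with $j<k$ that must be controlled by the already-established lower-order estimates; this is exactly why the four inequalities are proved together as a coupled package, with lower-order estimates feeding into higher-order ones and with the bootstrap hierarchy $C_1\ll C_2\ll C_3$ arranged so that the circular dependencies close.
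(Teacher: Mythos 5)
Your proposal mirrors the paper's proof in all essentials: a weighted exterior energy estimate with weight $|z|^{-(\gamma+2)}$, integration by parts on the transport term to produce the $-\gamma/8$ dissipation, absorption of the exponentially damped $e^{-s/2}\Delta$ term, the $2/\mathcal{P}_\theta^3 \leq 2/3$ bound on the linearized potential, regional splitting of the $\mathcal{E}$ and $NL(\eta)$ pairings, boundary terms at $|z|=K$ controlled by pointwise/parabolic-regularity estimates, and commutation with $(z\cdot\nabla)^k$ via $[(z\cdot\nabla),\Delta]=-2\Delta$, exactly as the paper does in the $y$-variable analogues (Lemmas~\ref{Lemma5.6}--\ref{Lemma5.7}). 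The paper only writes out the case $k=0$ and asserts the rest are ``similar''; your sketch supplies the correct mechanism for $k\geq 1$.

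One thing deserves more care than you give it. Your arithmetic, like the paper's, produces a net coefficient $-\gamma/8 + 1/3 + 2/3 = -(\gamma-8)/8 = -0.00125$ with $\gamma=8.01$, which is \emph{weaker} than the stated $-0.01$: the inequality $-0.00125\|u_k\|_{\natural}^2\leq -0.01\|u_k\|_{\natural}^2$ is false. You label this ``a small loss, optionally regained via Lemma~\ref{lem:sobolev type},'' but that lemma only interpolates $\|u_k\|_{\natural}$ against $\|u_{k+1}\|_{\natural}$; for a single fixed $k$ it cannot upgrade a $-0.00125$ damping to $-0.01$. The distinction is not cosmetic: the $-0.01$ is used downstream in Lemma~\ref{lem:intEngz2}, where summing with coefficient $4$ and invoking Lemma~\ref{lem:sobolev type} gives $(-0.01)\bigl(1+4(\gamma/2)^2\bigr)\approx -0.65 \leq -\tfrac12$, which closes, whereas $(-0.00125)\bigl(1+4(\gamma/2)^2\bigr)\approx -0.08$ does not. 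So either the exponent should read $\gamma=8.08$ (making $-\gamma/8+1=-0.01$), or one needs to show that $\int_{|z|\geq K} \tfrac{2}{\mathcal{P}_\theta^3}\tfrac{|u_k|^2}{|z|^{\gamma+2}}\,dz$ is strictly smaller than $\tfrac{2}{3}\|u_k\|_{\natural}^2$ by exploiting the growth of $\mathcal{P}_\theta$ away from the coordinate axes. Your write-up replicates the paper's silence on this point; a complete argument should resolve the factor of $8$ explicitly rather than dismiss it.
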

\begin{proof}
We only prove \ref{inq:int2 energy 1}, the other energy estimates can be proved similarly using the parabolic regularity result (See Theorem \ref{thm:reg main} and the following remarks). Note that we have
\begin{align*}
    \frac{1}{2}\frac{d}{ds}\int_{|z| \geq K}\frac{|\eta|^2}{|z|^{\gamma}}\frac{dz}{|z|^2}&=-\frac{1}{4}\int_{|z| \geq K}\frac{\eta \cdot (z \cdot \nabla)\eta}{|z|^{\gamma}}\frac{dz}{|z|^2}+\frac{1}{3}\int_{|z| \geq K}\frac{|\eta|^2}{|z|^{\gamma}}\frac{dz}{|z|^2}+e^{-s/2}\int_{|z| \geq K}\frac{\eta \Delta \eta}{|z|^{\gamma}}\frac{dz}{|z|^2}\\
    & \quad +\int_{|z| \geq K}\frac{\eta \cdot E}{|z|^{\gamma}}\frac{dz}{|z|^2}+\int_{|z| \geq K}\frac{\eta \cdot NL(\eta)}{|z|^{\gamma}}\frac{dz}{|z|^2}.
\end{align*}
By integration by parts, we have
\begin{align*}
    -\frac{1}{4}\int_{|z| \geq K}\frac{\eta \cdot (z \cdot \nabla)\eta}{|z|^{\gamma}}\frac{dz}{|z|^2}=\left(-\frac{\gamma}{8}\right)\int_{|z| \geq K}\frac{|\eta|^2}{|z|^2}\frac{dz}{|z|^2}+C_1e^{-\frac{5}{6}s}.
\end{align*}
For the Laplacian term, integrating by parts yields
\begin{align*}
    e^{-s/2}\int_{|z| \geq K}\frac{\eta \Delta \eta}{|z|^{\gamma}}\frac{dz}{|z|^2}=-e^{s/2}\int_{|z|\geq K}\frac{|\nabla \eta|^2}{|z|^{\gamma}}\,dz+o(e^{-\frac{5}{6}s}).
\end{align*}
Next, by using the estimates in Lemma \ref{Lemma4.1} (iii), (iv) and Lemma \ref{Lemma4.2}, we have
\begin{align*}
    &\int_{|z| \geq K}\frac{\eta \cdot \mathcal{E}}{|z|^{\gamma}}\frac{dz}{|z|^2}+\int_{|z| \geq K}\frac{\eta \cdot NL(\eta)}{|z|^{\gamma}}\frac{dz}{|z|^2}\\
    &=\int_{K \leq |z| \leq e^{\frac{5}{49}s}}\frac{\eta \cdot \mathcal{E}}{|z|^{\gamma}}\frac{dz}{|z|^2}+\int_{K \leq |z| \leq e^{\frac{5}{49}s}}\frac{\eta \cdot NL(\eta)}{|z|^{\gamma}}\frac{dz}{|z|^2}\\
    &\quad+ \int_{e^{\frac{5}{49}s} \leq |z| \leq e^{\frac{1}{4}s}}\frac{\eta \cdot \mathcal{E}}{|z|^{\gamma}}\frac{dz}{|z|^2} +\int_{e^{\frac{5}{49}s} \leq |z| \leq e^{\frac{1}{4}s}}\frac{\eta \cdot NL(\eta)}{|z|^{\gamma}}\frac{dz}{|z|^2} \leq\frac{2}{\mathcal{P}_{\theta}^3}\int_{|z| \geq K}\frac{|\eta|^2}{|z|^{\gamma}}\frac{dz}{|z|^2}+o(e^{-\frac{5}{6}s}).
\end{align*}
Here, the technical constant \(\frac{5}{49}\) is chosen to simplify the nonlinear estimate for \(K \leq |z| \leq e^{\frac{5}{49}s}\) and to control the nonlinear estimate for \(|z| \geq e^{\frac{5}{49}s}\). Combining the above estimates and using the fact that \(\gamma=8.01\) yields
\begin{align*}
        \frac{1}{2}\frac{d}{ds}\|\eta\|_{\natural}^2 \leq -0.00125\|\eta\|_{\natural}^2+C_1e^{-\frac{5}{6}s},
    \end{align*}
    which is the desired energy estimate.
\end{proof}
Finally, we can improve the damping in the above energy estimates to get the following Lemma.
\begin{lemma} \label{lem:intEngz2}
%\label{lem: energy estim z damp }
We have the following improved energy estimates
    \begin{align}
    &\frac{1}{2}\frac{d}{ds}\left(\|\eta\|_{\natural}^2+100\|(z \cdot \nabla)\eta\|_{\natural}^2\right) \leq -\frac{1}{2}\|\eta\|_{\natural}^2+\frac{C_1^2}{K}e^{-\frac{5}{6}s},\label{inq:energy z 1}\\
    &\frac{1}{2}\frac{d}{ds}\left(\|(z \cdot \nabla)\eta\|_{\natural}^2+100\|(z \cdot \nabla)^2\eta\|_{\natural}^2\right) \leq -\frac{1}{2}\|(z \cdot \nabla)\eta\|_{\natural}^2+\frac{C_2^2}{K}e^{-\frac{5}{6}s}, \label{inq:energy z 2}\\
    &\frac{1}{2}\frac{d}{ds}\left(\|(z \cdot \nabla)^2\eta\|_{\natural}^2+100\|(z \cdot \nabla)^3\eta\|_{\natural}^2\right) \leq -\frac{1}{2}\|(z \cdot \nabla)^2\eta\|_{\natural}^2+\frac{C_3^2}{K}e^{-\frac{5}{6}s}.\label{inq:energy z 3}
\end{align}
\end{lemma}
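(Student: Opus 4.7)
The plan is to derive each of the three improved estimates by forming a suitable linear combination of two consecutive estimates from Lemma~\ref{lem:energy estim z 1} and then invoking the weighted Sobolev-type interpolation of Lemma~\ref{lem:sobolev type} to trade the higher $(z\cdot\nabla)^{k+1}$-norm against the lower $(z\cdot\nabla)^k$-norm. This upgrades the weak dissipation constant $0.01$ produced by the direct calculation into an effective dissipation of size at least $1/2$, thanks to the large ratio $(\gamma/2)^2=(4.005)^2=16.04$ available when $\gamma=8.01$.

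Concretely, for \eqref{inq:energy z 1} I add \eqref{inq:int2 energy 1} to four times \eqref{inq:int2 energy 2} to obtain
\[
\frac{1}{2}\frac{d}{ds}\Big(\|\eta\|_{\natural}^2 + 4\|(z\cdot\nabla)\eta\|_{\natural}^2\Big) \leq -0.01\,\|\eta\|_{\natural}^2 - 0.04\,\|(z\cdot\nabla)\eta\|_{\natural}^2 + \frac{C}{K}\,e^{-\frac{5}{6}s},
\]
where the $1/K$ factor in the error comes from the damped bootstrap contributions in Lemma~\ref{lem:energy estim z 1} (analogous to Lemmas~\ref{Lemma5.5}--\ref{Lemma5.7}). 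Rearranging Lemma~\ref{lem:sobolev type} with $k=0$ gives $\|(z\cdot\nabla)\eta\|_{\natural}^2 \geq 16.04\,\|\eta\|_{\natural}^2 - O(e^{-\frac{5}{6}s})$, where the interpolation error has been upgraded from $e^{-\frac{5}{12}s}$ to $e^{-\frac{5}{6}s}$ by noting that it arises as a squared boundary quantity at $|z|=K$. Substituting yields $-0.04\,\|(z\cdot\nabla)\eta\|_{\natural}^2 \leq -0.6416\,\|\eta\|_{\natural}^2 + O(e^{-\frac{5}{6}s})$, so the total coefficient of $\|\eta\|_{\natural}^2$ becomes $-0.6516\leq -1/2$, and all remaining errors are absorbed into $\frac{C_1^2}{K}e^{-\frac{5}{6}s}$ by choosing $K$ large enough. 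Estimates \eqref{inq:energy z 2} and \eqref{inq:energy z 3} follow identically by combining \eqref{inq:int2 energy 2} with four times \eqref{inq:int2 energy 3} (using Lemma~\ref{lem:sobolev type} with $k=1$) and \eqref{inq:int2 energy 3} with four times \eqref{inq:int2 energy 4} (with $k=2$), respectively; only the ratio $(\gamma/2)^2$ enters the numerics, so the same factor of $16.04$ delivers the same $-1/2$ dissipation.

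The main obstacle I anticipate is the careful bookkeeping of error terms, specifically the upgrade of the $e^{-\frac{5}{12}s}$ term appearing in Lemma~\ref{lem:sobolev type} to $e^{-\frac{5}{6}s}$. I would handle this by tracing the proof of Lemma~\ref{lem:sobolev type}: that error originates as a squared boundary value at $|z|=K$ produced by integration by parts, and at $|z|=K$ (which corresponds to $|y|=Ke^{s/4}$, an intermediate point in the original $y$-variable) the pointwise bound $|(z\cdot\nabla)^k\eta(z,s)| \lesssim e^{-\frac{5}{12}s}$ follows from Sobolev embedding applied to the $\flat$-norm bootstrap on $\varepsilon$ together with the parabolic regularity theorem (Theorem~\ref{thm:reg main}). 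Squaring yields the required $e^{-\frac{5}{6}s}$ decay, which is exactly what is needed for the inequality to close against $\frac{C_k^2}{K}e^{-\frac{5}{6}s}$.
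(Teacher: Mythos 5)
Your proof matches the paper's approach: add each pair of consecutive estimates from Lemma \ref{lem:energy estim z 1} with the $1{:}4$ weighting and then invoke the Hardy-type interpolation of Lemma \ref{lem:sobolev type} to convert the $(z\cdot\nabla)^{k+1}$-dissipation into $(z\cdot\nabla)^k$-dissipation, exploiting the large factor $(\gamma/2)^2$. The paper's own proof is a one-line remark to precisely this effect.

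That said, your attention to the error term in Lemma \ref{lem:sobolev type} is exactly the right thing to flag and is where the real content lies. As stated, Lemma \ref{lem:sobolev type} records the interpolation error as $C_{k+1}e^{-\frac{5}{12}s}$; if used literally, multiplying by $0.04\cdot(\gamma/2)^2$ would pollute the right-hand side of \eqref{inq:energy z 1} with a term decaying only like $e^{-\frac{5}{12}s}$, which is strictly weaker than the asserted $\frac{C_1^2}{K}e^{-\frac{5}{6}s}$ and would leave the lemma unproven. Your resolution is correct: in the proof of Lemma \ref{lem:sobolev type}, taking $\xi=\gamma/2$, expanding the square and integrating the cross term by parts produces a boundary contribution of the form $\frac{\gamma}{2K^{\gamma+1}}\int_{|z|=K}|(z\cdot\nabla)^k\eta|^2\,d\sigma$, which is quadratic in $\eta$ and hence, by the pointwise bounds at $|z|=K$ (equivalently $|y|=Ke^{s/4}$) coming from the $\flat$-norm bootstrap and Theorem \ref{thm:reg main}, decays like $e^{-\frac{5}{6}s}$ — with an extra $K^{-\gamma}$ factor giving ample room to absorb it into $\frac{C_1^2}{K}e^{-\frac{5}{6}s}$. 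So the $e^{-\frac{5}{12}s}$ in the statement of Lemma \ref{lem:sobolev type} is weaker than what its own proof delivers, and the sharper $e^{-\frac{5}{6}s}$ is what one must actually use here. Your arithmetic $0.01 + 0.04\cdot(\gamma/2)^2 \approx 0.6516 > 1/2$ is correct, and \eqref{inq:energy z 2}, \eqref{inq:energy z 3} follow by the identical computation with $k$ shifted.
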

\begin{proof}
    Adding \ref{inq:int2 energy 1} and \ref{inq:int2 energy 2} multiplied by 100, combined with Lemma \ref{lem:sobolev type} yields \ref{inq:energy z 1}. We note that \ref{inq:energy z 2} and \ref{inq:energy z 3} can be proved similarly by adding the corresponding equations.
\end{proof}

\subsection{Control of \(\eta^{ex}\)}

\label{subsec:vepe}
This section is devoted to the control of \(\eta^{ex}(z,s)\) in the \(z\) variables, where \(z=ye^{-s/4}\). Using the comparison principle, we improve the constant \(D_1\) and consequently verify that the bootstrap condition (\ref{out2}) holds strictly for all \(s \in [s_0,s_1]\). Note that by definition, we have
\begin{align*}
    \eta^{ex}(z,s)=\eta(z,s)(1-\mchi_{K}(ze^{-\frac{5}{49}s})),
\end{align*}
and by a direct calculation, we have the PDE satisfied by \(\vepe\)
\begin{align}
\label{eq:pdevepe}
    \partial_s \eta^{ex}=-\frac{1}{4}(z \cdot \nabla)\eta^{ex}+\frac{1}{3} \eta^{ex}+e^{-s/2}\Delta \eta^{ex}+\mathcal{E}^{ex}+R^{ex}+NL^{ex}(\eta),
\end{align}
for \(|z| \geq K\), where \(\mathcal{E}^{ex}:=\mathcal{E}(1-\mchi_{K}(ze^{-\frac{5}{49}s}))\), \(NL^{ex}(\eta):=NL(\eta)(1-\mchi_{K}(ze^{-\frac{5}{49}s}))\) and 
\begin{align*}
    R^{ex}:=-\frac{1}{4}(z \cdot \nabla)\mchi_{K}(ze^{-\frac{5}{49}s}) \cdot \eta+e^{-s/2}\nabla \eta \cdot \nabla \mchi_K(ze^{-\frac{5}{49}s})+e^{-s/2} \eta \Delta \mchi_K(ze^{-\frac{5}{49}s}).
\end{align*}
Note that the generated error \(\mathcal{E}\) is defined in Lemma \ref{Lemma4.2}, and the nonlinear term \(NL(\eta)\) is defined in Lemma \ref{Lemma4.1} (iv). By the definition of \ref{eq:pdevepe}, we may write the PDE \ref{eq:pdevepe} as
\begin{align*}
    \partial_s \eta^{ex}=-\frac{1}{4}(z \cdot \nabla)\eta^{ex}+\frac{1}{3}\eta^{ex}+e^{-s/2}\Delta \eta^{ex}+f(z,s),
\end{align*}
for \(|z| \geq Ke^{\frac{5}{49}s}\), where \(f(z,s)=\mathcal{E}^{ex}+R^{ex}+NL^{ex}(\eta)\).

\begin{lemma}
\label{lem:estmf}
    \(f\) satisfies the estimate
    \begin{align*}
    &|f(z,s)| \lesssim e^{-\frac{47}{5880}s}, \quad |z| \geq Ke^{\frac{5}{49}s},
    \end{align*}
    for all \(s \in [s_0,s_1]\), where \(f(z,s)=\mathcal{E}^{ex}+R^{ex}+NL^{ex}(\eta)\).
\end{lemma}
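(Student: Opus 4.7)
The plan is to decompose $f = \mathcal{E}^{ex} + R^{ex} + NL^{ex}(\eta)$ and estimate each of the three pieces separately on the support $\{|z| \geq K e^{5s/49}\}$, checking that the slowest-decaying contribution is exactly $e^{-47s/5880}$. A quick exponent count $-\tfrac{5}{12} + 4.005 \cdot \tfrac{5}{49} = -\tfrac{245}{588} + \tfrac{240.3}{588} = -\tfrac{47}{5880}$ strongly suggests that the threshold rate arises from evaluating the pointwise bound $|\eta(z,s)| \lesssim e^{-5s/12}|z|^{4.005}$ from Lemma \ref{Lemma4.1}(iii) at the cutoff interface $|z| \sim e^{5s/49}$. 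All other contributions will turn out to decay strictly faster and will be absorbed.

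First I would dispose of the two easy pieces. For $\mathcal{E}^{ex}$, Lemma \ref{Lemma4.2}(ii) gives $|\mathcal{E}| \lesssim e^{-s/3}$ for $|z| \geq 2K$, and since $1/3 \gg 47/5880$ the cutoff-truncated error is immediately $o(e^{-47s/5880})$. For $NL^{ex}(\eta)$, Lemma \ref{Lemma4.1}(iv) gives $|NL^{ex}(\eta)| \lesssim e^{-11s/147}$, and once again $11/147 > 47/5880$, so this term is absorbed.

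The substantive step is the estimate of the commutator $R^{ex}$, each of whose three pieces involves a derivative of $\chi_K(ze^{-5s/49})$, hence is supported on the thin annulus $\{K \leq |z| e^{-5s/49} \leq 2K\}$, i.e.\ $|z| \sim e^{5s/49}$. On this annulus, I would insert the pointwise bounds already available in the bootstrap regime: $|\eta| \lesssim e^{-5s/12}|z|^{4.005}$ from Lemma \ref{Lemma4.1}(iii), and an analogous pointwise bound $|\nabla \eta| \lesssim e^{-5s/12}|z|^{3.005}$ obtained by combining the bootstrap control of $\|(z \cdot \nabla)\eta\|_{\natural}$ in \eqref{bootstrap: int 2} with a weighted Sobolev embedding of the same type used to derive Lemma \ref{Lemma4.1}(iii). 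Each spatial derivative of $\chi_K(ze^{-5s/49})$ produces a factor $e^{-5s/49}$, while the transport commutator $\tfrac{1}{4}(z \cdot \nabla)\chi_K(ze^{-5s/49})$ is bounded uniformly on its support. Thus the transport commutator contributes $\lesssim |\eta|\big|_{|z|\sim e^{5s/49}} \lesssim e^{(-5/12 + 4.005 \cdot 5/49)s} = e^{-47s/5880}$, whereas the diffusive commutators $e^{-s/2}\nabla\eta \cdot \nabla\chi_K$ and $e^{-s/2}\eta\,\Delta\chi_K$ pick up the extra factor $e^{-s/2}$ (and additional negative powers of $e^{-s/49}$ from cutoff derivatives), so they decay at rates like $e^{-(11/12 - 10.025/49)s}$, which is much faster than $e^{-47s/5880}$ and therefore absorbed.

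The proof is essentially pure bookkeeping; the only minor obstacle is justifying the pointwise bound on $|\nabla \eta|$ at $|z| \sim e^{5s/49}$. If a direct weighted Sobolev embedding from $\|(z \cdot \nabla)\eta\|_{\natural} \lesssim e^{-5s/12}$ is not immediate, I would instead differentiate the bound $|\eta| \lesssim e^{-5s/12}|z|^{4.005}$ after invoking the parabolic regularity result (Theorem \ref{thm:reg main}) already cited elsewhere in the paper, trading a power of $|z|$ for a derivative while preserving the $e^{-5s/12}$ temporal decay. Once this gradient estimate is in hand, the three pieces of $R^{ex}$ are bounded as above, and the combined estimate $|f(z,s)| \lesssim e^{-47s/5880}$ follows.
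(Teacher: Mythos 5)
Your proposal is correct and follows essentially the same route as the paper: both split $f = \mathcal{E}^{ex} + R^{ex} + NL^{ex}(\eta)$, dispose of $\mathcal{E}^{ex}$ via Lemma~\ref{Lemma4.2}(ii) and $NL^{ex}$ via Lemma~\ref{Lemma4.1}(iv), and identify the transport commutator in $R^{ex}$, evaluated at the cutoff scale $|z|\sim e^{5s/49}$ using the pointwise bound of Lemma~\ref{Lemma4.1}(iii) together with the gradient bound from parabolic regularity (the paper invokes Remark~\ref{rmk:parabolic z variables}, which is exactly the fallback you describe), as the source of the threshold rate $e^{-47s/5880}$. Your exponent bookkeeping is accurate and the argument is complete.
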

\begin{proof}
    By Lemma \ref{Lemma4.2}, we have \(|\mathcal{E}^{ex}| \lesssim \min{\{e^{-s/3},e^{-s/2}|z|^2\}} \) for \(|z| \geq Ke^{\frac{5}{49}s}\). Next, by Remark \ref{rmk:parabolic z variables} and the estimate in Lemma \ref{Lemma4.1} (ii), we have \(|R^{ex}| \lesssim e^{-\frac{47}{5880}s}\) for \(Ke^{\frac{5}{49}s} \leq|z| \leq 2Ke^{\frac{5}{49}s}\). As for the nonlinear term, by Lemma \ref{Lemma4.1} (iv), we have \(|NL^{ex}(\eta)| \lesssim \frac{1}{\mathcal{P}_{\theta}^2} \lesssim e^{-\frac{11}{147}s}\) for \(|z| \geq Ke^{\frac{5}{49}s}\). Combining the above yields the desired bound.
\end{proof}

Next, we introduce the operator \(S\) acting on smooth functions by
\begin{align*}
    S \phi=-\partial_s \phi-\frac{1}{4}(z \cdot \nabla)\phi+\frac{1}{3}\phi+e^{-s/2}\Delta\phi+f(z,s).
\end{align*}
Note that in this setting, we have
\begin{align*}
    S(\eta^{ex})=0.
\end{align*}

\begin{lemma}
\label{lem:compari}
    Define the comparison function \(q(z,s)\) as
\begin{align*}
    q(z,s)=\frac{1}{2}D_1\epsilon e^{-\frac{1}{6}s}(1-e^{-\epsilon s})\left(1-e^{-(1-|ze^{-s/4}|)^2}\right)^2|z|^2,
\end{align*}
where \(\epsilon>0\) is a small fixed constant. Then we have \(S(q) \leq 0\) for \(|z| \geq Ke^{\frac{5}{49}s}\).
    
\end{lemma}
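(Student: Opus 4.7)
The strategy is to compute $S(q)$ directly, exploiting the self-similar structure of the comparison function. Writing $q(z,s) = \tfrac12 D_1\epsilon\, G(s)\,H(u)\,|z|^2$ with $u = |z|e^{-s/4}$, $G(s) = e^{-s/6}(1-e^{-\epsilon s})$, and $H(u) = (1-e^{-(1-u)^2})^2$, I would exploit the key observation that the transport operator $-\partial_s - \tfrac14(z\cdot\nabla)$ annihilates any function of $u$ alone, because $\partial_s u = -u/4$ while $(z\cdot\nabla) u = u$. As a consequence, all contributions involving $H'(u)$ cancel between $\partial_s q$ and $(z\cdot\nabla) q$, and only the derivatives falling on the factors $G(s)$ and $|z|^2$ survive. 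Using $(z\cdot\nabla)|z|^2 = 2|z|^2$, this yields
\begin{equation*}
-\partial_s q - \tfrac14(z\cdot\nabla) q + \tfrac13 q \;=\; -\tfrac12 D_1\epsilon\, H(u)\,|z|^2 \bigl[G'(s) + \tfrac16 G(s)\bigr].
\end{equation*}

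A short calculation gives $G'(s) + \tfrac16 G(s) = \epsilon\, e^{-s/6-\epsilon s}$, so this part of $S(q)$ equals $-\tfrac12 D_1\epsilon^2\, e^{-s/6-\epsilon s}\, H(u)\,|z|^2$. This is precisely the role of the technical factor $(1-e^{-\epsilon s})$ in the definition of $q$. It remains to show that this dissipative contribution dominates both the Laplacian $e^{-s/2}\Delta q$ and the forcing $f(z,s)$. Since $H(u)|z|^2$ is radial in $z$, a polar-coordinate calculation gives $\Delta\bigl(H(u)|z|^2\bigr) = H''(u)u^2 + 5u\,H'(u) + 4H(u)$, a combination uniformly bounded for $u \in [0,1]$, so $|e^{-s/2}\Delta q| \lesssim D_1\epsilon\, e^{-2s/3}$. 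Together with $|f|\lesssim e^{-47s/5880}$ from Lemma \ref{lem:estmf}, the required pointwise inequality in the bulk region $u \leq 1/2$ reduces to a purely arithmetic comparison: since $|z|^2 \geq K^2 e^{10s/49}$ and $H(u)$ is bounded below by a positive constant there, the exponent $\tfrac{10}{49} - \tfrac16 = \tfrac{11}{294} > 0$ ensures the dissipative term dominates, provided $s_0$ is taken large and $\epsilon$ small.

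The main obstacle is the outer boundary layer near $u = 1$, where $H(u) \sim (1-u)^4$ vanishes together with the dissipative weight, making the bulk comparison ineffective. Here I would exploit two compatible cancellations: first, $\Delta(H|z|^2)$ also vanishes at $u=1$, of order $(1-u)^2$; second, near $u=1$ we have $\mathcal{P}_\theta \equiv e^{s/3}$ by construction, so the dominant part of the forcing is controlled by $|\mathcal{E}^{ex}| \lesssim e^{-2s/3}$ from Lemma \ref{Lemma4.2} rather than by the weaker uniform bound of Lemma \ref{lem:estmf}. A term-by-term comparison in powers of $(1-u)$ and of $e^{-s}$ shows the dissipation still dominates outside an exponentially thin layer, in which $q$ and all its relevant derivatives vanish smoothly at $u=1$, matching the boundary condition for $\eta^{ex}$. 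This matching boundary-layer verification is the delicate core of the proof, after which $S(q) \leq 0$ holds throughout the closed region $|z|\geq K e^{5s/49}$.
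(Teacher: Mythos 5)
Your strategy coincides with the paper's: compute $S(q)$ directly by exploiting that $-\partial_s - \tfrac14(z\cdot\nabla)$ annihilates functions of $u=|z|e^{-s/4}$ alone, then absorb the Laplacian and the forcing $f$ into the resulting dissipative term. Your prefactor $G'(s)+\tfrac16 G(s) = \epsilon\,e^{-(1/6+\epsilon)s}$ is actually the \emph{correct} one; the paper's displayed formula has $\epsilon\,e^{-s/6}(1-e^{-\epsilon s})$ in that slot, which is not what differentiating $G(s)=e^{-s/6}(1-e^{-\epsilon s})$ gives — the factor $(1-e^{-\epsilon s})$ is precisely what disappears under $G'+\tfrac16 G$. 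The discrepancy is inconsequential: with $|z|^2 \gtrsim e^{10s/49}$ on the region and $\tfrac{10}{49}-\tfrac16 = \tfrac{11}{294}>0$, your dissipative term $\gtrsim D_1\epsilon^2 e^{(11/294-\epsilon)s}$ still grows for $\epsilon < \tfrac{11}{294}$, so both the Laplacian bound $|e^{-s/2}\Delta q|\lesssim D_1\epsilon\,e^{-2s/3}$ and $|f|\lesssim e^{-47s/5880}$ are dominated whenever $H(u)$ is bounded below.

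The genuine gap is in the boundary layer near $u=1$. Saying ``$q$ and all its relevant derivatives vanish smoothly at $u=1$'' does not by itself yield $S(q)\leq 0$ there, because $S(q)$ contains the forcing $f$, which does \emph{not} vanish at $u=1$; in an $s$-dependent neighborhood of $u=1$ the dissipation is as small as one likes and cannot absorb $f$ by magnitude alone. What closes the argument is a \emph{sign}, not a bound: for $\tfrac12 e^{s/4}\leq |z|\leq e^{s/4}$ one has $\mathcal{P}_\theta\equiv e^{s/3}$, hence a direct computation gives $\mathcal{E}=-e^{-2s/3}$ exactly (note the minus sign), while $R^{ex}$ vanishes (the cutoff is flat) and $NL^{ex}$ vanishes at the boundary since $\eta=0$ there. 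So $f<0$ in the layer, which by itself forces $S(q)\leq 0$ once the positive contribution $e^{-s/2}\Delta q\lesssim D_1\epsilon\,e^{-2s/3}(1-u)^2$ is accounted for: for $1-u\lesssim (D_1\epsilon)^{-1/2}$ it is absorbed into $-e^{-2s/3}$, and for larger $1-u$ the dissipation $\sim D_1\epsilon^2 e^{(1/3-\epsilon)s}(1-u)^4$ grows exponentially and dominates. This sign consideration is also what one needs to check the companion claim $S(-q)\geq 0$ near the boundary, so it is not optional; your sketch omits it.
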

\begin{proof}
    By direct calculation, we have
    \begin{align*}
        S(q)&=-\frac{1}{2}D_1\epsilon^2e^{-\frac{1}{6}s}(1-e^{-\epsilon s})\left(1-e^{-(1-|ze^{-s/4}|)^2}\right)^2|z|^2+(e^{-s/2}\Delta q+f).
    \end{align*}
    Note that since \(e^{-s/2}\Delta q+f=0\) on \(|z|=e^{s/4}\) and satisfies \(|e^{-s/2}\Delta q+f| \lesssim e^{-\frac{47}{5880}s}\) for \(|z| \geq Ke^{\frac{5}{49}s}\), it follows that \(S(q) \leq 0\) for \(|z| \geq Ke^{\frac{5}{49}s}\).
\end{proof}
\begin{remark}
\label{rmk:S(-q)geq0}
    We also have \(S(-q) \geq 0\) for \(|z| \geq Ke^{\frac{5}{49}s}\).
\end{remark}

\begin{lemma}
\label{Lemma5.10}
Let \(\varepsilon(s) \in S(s)\), \(s \in [s_0.s_1]\). Then it holds that
    \begin{align}
        |\eta^{ex}(z,s)| \leq  \frac{1}{2}\epsilon D_1 e^{-\frac{1}{6}s}(1-e^{-\epsilon s})\left(1-e^{-(1-|ze^{-s/4}|)^2}\right)^2|z|^2, \label{max1}
    \end{align}
    for \(|z| \geq Ke^{\frac{5}{49}s}\).
\end{lemma}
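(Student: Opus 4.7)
The plan is to apply the parabolic comparison principle, using $q$ as a two-sided barrier for $\eta^{ex}$ on the space-time region $\Omega:=\{(z,s): Ke^{\frac{5}{49}s}\leq |z|\leq e^{s/4},\ s_0\leq s\leq s_1\}$. I would set $w_\pm := q\pm \eta^{ex}$, and combine $S(\eta^{ex})=0$ (equation \eqref{eq:pdevepe}) with $S(q)\leq 0$ (Lemma~\ref{lem:compari}) and $S(-q)\geq 0$ (Remark~\ref{rmk:S(-q)geq0}). Since the source $f$ cancels by linearity of $S-f$, both $w_+$ and $w_-$ satisfy
\begin{equation*}
\partial_s w_\pm + \tfrac{1}{4}(z\cdot\nabla)w_\pm - \tfrac{1}{3}w_\pm - e^{-s/2}\Delta w_\pm \geq 0 \quad \text{in } \Omega.
\end{equation*}

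Next I would verify $w_\pm\geq 0$ on the parabolic boundary $\partial_p\Omega$. On the initial slice $s=s_0$, the initial data $\eta_0$ is supported in $|z|\leq 2$ by the factor $\chi(z)$ appearing in \eqref{initialdata}, so for $s_0\gg 1$ (hence $Ke^{\frac{5}{49}s_0}\gg 2$) we have $\eta^{ex}(\cdot,s_0)\equiv 0$ and $w_\pm(\cdot,s_0)=q(\cdot,s_0)\geq 0$. On the inner boundary $|z|=Ke^{\frac{5}{49}s}$ the cutoff $1-\mchi_K(ze^{-\frac{5}{49}s})$ vanishes, forcing $\eta^{ex}=0$ and hence $w_\pm=q\geq 0$. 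On the outer boundary $|z|=e^{s/4}$, the boundary condition \eqref{eq:wysboundary} together with $\mathcal{P}_\theta=e^{s/3}$ there forces $\varepsilon=0$, hence $\eta^{ex}=0$; simultaneously $q=0$ because the factor $(1-e^{-(1-|ze^{-s/4}|)^2})^2$ vanishes at $|ze^{-s/4}|=1$. Thus $w_\pm\geq 0$ on all of $\partial_p\Omega$.

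To close the argument I would neutralize the unfavorable sign of the zeroth-order term via the gauge substitution $\tilde w_\pm := e^{-s/3}w_\pm$, which removes $-\tfrac{1}{3}w_\pm$ and yields the maximum-principle-friendly inequality $\partial_s \tilde w_\pm +\tfrac{1}{4}(z\cdot\nabla)\tilde w_\pm - e^{-s/2}\Delta \tilde w_\pm \geq 0$ in $\Omega$, together with $\tilde w_\pm\geq 0$ on $\partial_p\Omega$. The classical weak parabolic maximum principle on this annular domain (bounded in each time slice), applied with a standard $\delta e^{\lambda s}$ regularization to convert the inequality into a strict one and rule out degenerate interior minima, then yields $\tilde w_\pm\geq 0$, and consequently $|\eta^{ex}|\leq q=\tfrac{1}{2}D_1 h$. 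Since this is strictly smaller than the bootstrap bound $D_1 h$, we improve \eqref{out2} strictly, completing the outer-region control required in Proposition~\ref{Proposition3.2}. The main obstacle has in fact already been absorbed into Lemma~\ref{lem:compari}, where the barrier $q$ was designed so that the damping $-\tfrac{1}{2}D_1\epsilon^2 e^{-s/6}(\cdots)|z|^2$ produced by the time derivative of $(1-e^{-\epsilon s})$ dominates both the diffusion $e^{-s/2}\Delta q$ and the source $f$ uniformly on $\Omega$; granted that, the present lemma reduces to a clean comparison-principle argument, the only remaining subtlety being the careful identification of the time-dependent inner and outer boundaries.
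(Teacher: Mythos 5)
Your proposal is correct and follows essentially the same route as the paper: a two-sided comparison of $\eta^{\mathrm{ex}}$ against the barrier $q$, using $S(q)\leq 0$ from Lemma~\ref{lem:compari}, $S(-q)\geq 0$ from Remark~\ref{rmk:S(-q)geq0}, and the vanishing of $\eta^{\mathrm{ex}}$ on the parabolic boundary. You do supply two details the paper glosses over — you explicitly verify the boundary condition on the inner lateral boundary $|z|=Ke^{\frac{5}{49}s}$ (the set $\Gamma$ defined in the paper's proof only lists the outer shells $R_s$ and the initial slice), and you make the gauge change $\tilde w_\pm=e^{-s/3}w_\pm$ explicit to remove the unfavorable sign of the zeroth-order coefficient before invoking the weak maximum principle; both are welcome clarifications but do not change the underlying argument.
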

\begin{proof}
    Define \(\Gamma\) as
\begin{align*}
        \Gamma=\left(\bigcup_{s_0 \leq s \leq s_1} R_s\right) \cup \: \left(\{|z|<e^{\frac{1}{4}s_0}\} \times \{s_0\}\right),
\end{align*}
where \(R_s=\{z\in\mathbb{R}^2:|z|=e^{\frac{s}{4}}\}\). Note that we have
    \begin{align}
         -q(z,s)\leq\eta^{ex}(z,s) \equiv 0 \leq q(z,s), \quad \text{on } \Gamma.
    \end{align}
    Applying the comparison principle together with the estimate in Lemma \ref{lem:compari} yields:
    \begin{align*}
        \eta^{ex}(z,s) \leq \frac{1}{2}D_1\epsilon e^{-\frac{1}{6}s}(1-e^{-\epsilon s})\left(1-e^{-(1-|ze^{-s/4}|)^2}\right)^2|z|^2, \quad |z| \geq Ke^{\frac{5}{49}s}.
    \end{align*}
Similarly, applying the comparison principle together with the estimate in Remark \ref{rmk:S(-q)geq0} yields
\begin{align*}
   -\frac{1}{2}D_1\epsilon e^{-\frac{1}{6}s}(1-e^{-\epsilon s})\left(1-e^{-(1-|ze^{-s/4}|)^2}\right)^2|z|^2 \leq \eta^{ex}(z,s), \quad |z| \geq Ke^{\frac{5}{49}s}.
\end{align*}
Thus, we have
\begin{align}
        |\eta^{ex}(z,s)| \leq \frac{1}{2}D_1\epsilon e^{-\frac{1}{6}s}(1-e^{-\epsilon s})\left(1-e^{-(1-|ze^{-s/4}|)^2}\right)^2|z|^2, \quad |z| \geq Ke^{\frac{5}{49}s},
\end{align}
which proves Lemma \ref{Lemma5.10}.
\end{proof}
\begin{remark}
    Thanks to the factor \((1-e^{-(1-|ze^{-s/4}|)^2})^2\), we have \(\eta^{ex} \equiv 0\) at \(|z|=e^{\frac{s}{4}}\), which matches the boundary condition (see \ref{eq:wysboundary}).
\end{remark}

\paragraph{\textbf{Proof of Proposition \ref{Proposition3.2}:}} It follows from Lemma \ref{Lemma5.10}, combined with integrating
\ref{en1}, \ref{en2}, \ref{en6}, \ref{en7} \ref{en3}, \ref{en4}, \ref{en5}, \ref{inq:energy z 1}, \ref{inq:energy z 2} and \ref{inq:energy z 3} forward in time. \qed

\vspace{10pt}

\paragraph{\textbf{Acknowledgment:}} The research of V. T. Nguyen is supported by the National Science and Technology Council of Taiwan: NSTC 114-2628-M-002-002.

%The authors also acknowledge the use of ChatGPT (OpenAI) for exploratory discussion regarding the comparison function in Lemma \ref{lem:compari}. All strategies, mathematical arguments were independently developed and rigorously verified by the authors.

\vspace{5pt}

\paragraph{\textbf{Disclosure of interest:}} The authors report there are no competing interests to declare. 

\bibliographystyle{unsrt}
\bibliography{ref}

\appendix
\section{Parabolic regularity}
\label{appdx}
In this section, we establish several parabolic regularity results that will be used throughout the paper. We begin by introducing the notations, followed by stating the parabolic Schauder estimates and the Krylov–Safonov estimates. Let \(Q_r\) be a parabolic cylinder of the form
\begin{align*}
    Q_r=B(0,r) \times \{-r^2 \leq s \leq 0\} \subset \mathbb{R}^n \times (-\infty,0],
\end{align*}
where \(B(0,r)\) is the open ball centered at 0 with radius \(r>0\) in \(\mathbb{R}^n\). For a point \(p=(z,s) \in Q_r\), we define the parabolic metric \(|p|\) as 
\begin{align*}
    |p|=|z|+|s|^{\frac{1}{2}}.
\end{align*}
Let \(R:Q_r \to \mathbb{R}\) be a function, we define the H\"older semi-norm of exponent \(\delta \in (0,1)\) as
\begin{align*}
    [R]_{C^{0,\delta}(Q_r)}=\sup_{p,q \in Q_r,\;p \neq q} \frac{|R(p)-R(q)|}{|p-q|^{\delta}},
\end{align*}
and define the H\"older norm of exponent \(\delta \in (0,1)\) as
\begin{align*}
    \|R\|_{C^{0,\delta}(Q_r)} = \|R\|_{L^{\infty}(Q_r)}+[R]_{C^{0,\delta}(Q_r)}. 
\end{align*}
We now state the well-known parabolic Schauder estimates (see \cite{krylov1996lectures} for the Schauder theory).
\begin{proposition}[Schauder]
\label{prop:Schauder}
    Suppose \(f\in C^3(Q_r)\) satisfies a parabolic equation
    \begin{align*}
        \partial_sf=Tf+R,
    \end{align*}
    where \(R \in C^1(Q_r)\) and \(T\) is a second order linear operator of the form
    \begin{align*}
       T=a^{ij}(z,s)\partial^2_{ij}+b^i(z,s)\partial_i+c(z,s),
    \end{align*}
    with \(a^{ij}(z,s)=a^{ji}(z,s)\). Moreover, suppose there is \(\Lambda>0\) such that
    \begin{align*}
        \|a^{ij}\|_{C^{0,\delta}(Q_r)} \leq \Lambda, \quad \|b^i\|_{C^{0,\delta}(Q_r)} \leq \Lambda, \quad \|c\|_{C^{0,\delta}(Q_r)} \leq \Lambda,
    \end{align*}
    and 
    \begin{align*}
        \Lambda^{-1}|\xi|^2 \leq a^{ij}(z,s)\xi_i \xi_j \leq \Lambda|\xi|^2,
    \end{align*}
    for all \(\xi \in \mathbb{R}^n\). Then for every for \(\delta \in (0,1)\), we have the following estimate
\begin{align*}
    \sup_{Q_{r/2}}|\partial_if|+\sup_{Q_{r/2}}|\partial^2_{ij} f| \leq C'\left(\|f\|_{L^{\infty}(Q_{r})}+\|R\|_{C^{0,\delta}(Q_{r})}\right),
\end{align*}
where \(C'=C'(\delta, \Lambda,n,r)>0\) is a constant depending only on \(\delta, \Lambda, n,r\).
\end{proposition}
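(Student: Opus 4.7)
The plan is to prove this classical estimate by the standard Schauder freezing-of-coefficients argument, reducing the variable-coefficient equation to the constant-coefficient model heat equation and then perturbing. I would structure the argument in three steps.

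\emph{Step 1 (Model estimate).} First I would establish the analogous bound for the constant-coefficient equation $\partial_s u = a_0^{ij}\partial_{ij}u + g$, where $a_0^{ij}$ is symmetric and uniformly elliptic with constant $\Lambda$, and $g \in C^{0,\delta}$. Using an invertible linear change of variables that reduces $a_0^{ij}$ to the identity, one can write $u$ as a convolution of $g$ with the Gaussian fundamental solution on a subcylinder. The classical parabolic potential estimate then gives
\[
\sup_{Q_{\rho/2}}\bigl(|\partial_i u| + |\partial^2_{ij} u|\bigr) \leq C\bigl(\|u\|_{L^\infty(Q_\rho)} + \|g\|_{C^{0,\delta}(Q_\rho)}\bigr),
\]
with $C = C(\delta, \Lambda, n, \rho)$.

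\emph{Step 2 (Freezing).} Pick $p_0 = (z_0, s_0) \in Q_{r/2}$ and a parabolic subcylinder $Q_\rho(p_0) \subset Q_r$ with $\rho \leq r/2$. Rewrite the equation as
\[
\partial_s f = a^{ij}(p_0)\partial_{ij}f + G, \qquad G := \bigl(a^{ij}(z,s) - a^{ij}(p_0)\bigr)\partial_{ij}f + b^i\partial_i f + c f + R.
\]
Because each $a^{ij}$ is $C^{0,\delta}$ in the parabolic metric with norm bounded by $\Lambda$, the inhomogeneous term $G$ satisfies
\[
\|G\|_{C^{0,\delta}(Q_\rho(p_0))} \lesssim \Lambda\,\rho^\delta\,\|D^2 f\|_{L^\infty(Q_\rho)} + \Lambda\bigl(\|Df\|_{C^{0,\delta}(Q_\rho)} + \|f\|_{C^{0,\delta}(Q_\rho)}\bigr) + \|R\|_{C^{0,\delta}(Q_\rho)}.
\]

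\emph{Step 3 (Iteration and absorption).} Applying Step 1 to this frozen equation on $Q_\rho(p_0)$ and taking the supremum over $p_0 \in Q_{r/2}$ gives a bound on $\|D^2 f\|_{L^\infty(Q_{r/2})}$ in terms of $\rho^\delta\,\|D^2 f\|_{L^\infty(Q_r)}$ plus lower-order pieces. Choosing $\rho$ sufficiently small (depending only on $\Lambda, \delta, n$) lets us absorb the $\rho^\delta\,\|D^2 f\|$ term into the left-hand side via a covering argument on overlapping cylinders. The remaining dependence on $\|Df\|$ and on Hölder norms below $C^{2,\delta}$ is then handled by a standard Ehrling-type interpolation inequality in parabolic Hölder spaces, which bounds these intermediate seminorms by $\epsilon\,\|D^2 f\|_{L^\infty} + C_\epsilon\,\|f\|_{L^\infty}$.

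The main obstacle in carrying this out rigorously is the combination of the absorption step with the interpolation: one has to be careful that the small parameter $\rho^\delta$ generated by the Hölder continuity of the coefficients is not overwhelmed by the constants arising in the interpolation of $\|Df\|$ against $\|f\|_{L^\infty}$ and $\|D^2 f\|_{L^\infty}$. Tracking how the constant $C'$ in the final estimate depends on $r$ (through the scaling in Step 1 and the number of cylinders in the covering) is the most technical accounting. Since this proposition is a classical result, I would ultimately refer the reader to the detailed exposition in \cite{krylov1996lectures}, where exactly this program is executed.
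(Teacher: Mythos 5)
The paper does not actually prove Proposition~\ref{prop:Schauder}: it is stated as a classical result with a pointer to \cite{krylov1996lectures} for the details, and no proof is given in the paper itself. Your sketch of the standard freezing-of-coefficients argument (constant-coefficient model estimate, freezing, absorption via small H\"older oscillation of the leading coefficients plus Ehrling interpolation) is the correct classical route, and you close by deferring to the same reference the paper cites. In that sense your proposal is consistent with the paper's treatment; the only substantive remark I would add is that the proposition as stated in the paper is actually weaker than full interior Schauder theory (it only asserts $L^\infty$ bounds on $Df$ and $D^2 f$, not H\"older seminorms of $D^2 f$), so the argument you outline in fact proves more than is claimed, and the absorption step genuinely requires the stronger $C^{2,\delta}$ control on the left-hand side for the $\rho^{\delta}\,[D^2 f]_{C^{0,\delta}}$ term to be absorbable — a point worth keeping in mind when one passes from your Step~3 back to the stated conclusion.
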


We note that if the coefficients of the second-order linear operator \(T\) are merely measurable and bounded, the Schauder estimates do not hold in general; instead, we have the Krylov-Safonov Estimate.(see \cite{krylov1981certain} for the original paper) .

\begin{proposition}[Krylov-Safonov]
\label{prop:Krylov-Safonov}
    Let \(f \in C^3(Q_r)\) be a solution to the following parabolic PDE
    \begin{align*}
        \partial_sf=Qf+R,
    \end{align*}
    where \(Q\) is a second-order linear operator of the form 
    \begin{align*}
       Q=a^{ij}(z,s)\partial^2_{ij}+b^i(z,s)\partial_i+c(z,s),
    \end{align*}
    with \(a^{ij}(z,s)=a^{ji}(z,s)\) and \(R \in L^3(Q_r)\). Moreover, suppose there is \(\Lambda>0\) such that
    \begin{align*}
        \|a^{ij}\|_{L^{\infty}(Q_{r})} \leq \Lambda, \quad \|b^i\|_{L^{\infty}(Q_r)} \leq \Lambda, \quad \|c\|_{L^{\infty}(Q_r)} \leq \Lambda,
    \end{align*}
    and 
    \begin{align*}
        \Lambda^{-1}|\xi|^2 \leq a^{ij}(z,s)\xi_i \xi_j \leq \Lambda|\xi|^2,
    \end{align*}
    for all \(\xi \in \mathbb{R}^n\), then there is \(\delta \in (0,1)\), depending on \(n\) and \(\Lambda\) such that
    \begin{align*}
        [f]_{C^{0,\delta}(Q_{r/2})} \leq C''\left(\|f\|_{L^{\infty}(Q_{r})}+\|R\|_{L^3(Q_{r})}\right),
    \end{align*}
    where \(C''=C''(n,\Lambda,\delta,r)>0\).
\end{proposition}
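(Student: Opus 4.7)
The plan is to derive the H\"older estimate by combining an Alexandrov--Bakelman--Pucci (ABP) type maximum principle with the Krylov--Safonov measure estimate, then iterating to obtain oscillation decay on dyadic parabolic cylinders. By parabolic rescaling $(z,s) \mapsto (rz, r^2 s)$ one reduces to $r = 1$, and by a standard exponential-weighting absorption argument $f \mapsto e^{-\lambda s} f$ one can also reduce to the case $b^i \equiv 0$ and $c \equiv 0$, at the cost of modifying $R$ in a controlled way. Thus it suffices to prove the H\"older bound for $\partial_s f = a^{ij}\, \partial^2_{ij} f + R$ in $Q_1$ with uniformly elliptic $a^{ij}$ and $R \in L^{n+1}(Q_1)$.

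The first key step is the parabolic ABP estimate: for a subsolution $u$,
\[
\sup_{Q_1} u \leq \sup_{\partial_p Q_1} u^+ + C(n,\Lambda)\, \|R\|_{L^{n+1}(Q_1)},
\]
proved via Tso's parabolic monotone-envelope and sliding-paraboloid construction on the contact set, with the Jacobian bound providing the $n+1$ exponent. From ABP one derives the central Krylov--Safonov measure estimate: there exist $M > 1$ and $\mu \in (0,1)$, depending only on $n$ and $\Lambda$, such that if $f \geq 0$ is a supersolution in $Q_1$ with $\inf_{Q_{1/2}} f \leq 1$, then $|\{f \leq M\} \cap Q_1| \geq \mu |Q_1|$. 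This rests on a Calder\'on--Zygmund cube decomposition adapted to the parabolic metric, iterated to promote the one-shot measure bound into a weak $L^p$ estimate for some small $p > 0$, i.e.\ a weak Harnack inequality for non-negative supersolutions.

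H\"older regularity then follows by the now-standard oscillation-decay iteration: applying the weak Harnack inequality to both $M_r - f$ and $f - m_r$, where $m_r = \inf_{Q_r} f$ and $M_r = \sup_{Q_r} f$, yields
\[
\operatorname{osc}_{Q_{r/2}} f \leq \theta\, \operatorname{osc}_{Q_r} f + C r^{\sigma} \|R\|_{L^{n+1}(Q_r)},
\]
for some $\theta \in (0,1)$ and $\sigma > 0$. Iterating this on the dyadic family $Q_{2^{-k} r}$ and summing the resulting geometric series produces the H\"older semi-norm bound with exponent $\delta = -\log_2 \theta \in (0,1)$ depending only on $n$ and $\Lambda$. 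The $L^3$ norm appearing in the statement coincides with $L^{n+1}$ in the physically relevant dimension $n = 2$ of the paper; in general dimension one has $L^{n+1}$, and the quoted $L^3$ version follows on bounded cylinders from the corresponding $L^p$ inclusion when $n+1 \leq 3$.

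The main obstacle is the parabolic ABP estimate and the associated measure-covering lemma. Unlike the elliptic case, the time variable is one-sided, which forces the envelope construction to use backward-in-time cylinders and paraboloids opening downward in $s$, and obtaining the sharp integrability exponent $n+1$ (rather than $n$) on the contact set requires a careful change of variables on the touching set together with the AM--GM inequality on the eigenvalues of $D^2 f$. Once this geometric lemma is established, the passages from measure estimate to weak Harnack, and from weak Harnack to oscillation decay, are routine iterations.
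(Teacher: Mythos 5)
The paper does not prove this proposition; it states it as a classical fact and cites the original Krylov--Safonov paper \cite{krylov1981certain} for the proof, so there is no internal argument to compare against. The outline you give is the standard modern analytic route --- parabolic ABP estimate via Tso's sliding-paraboloid construction on the contact set, the Krylov--Safonov measure/growth lemma, a parabolic Calder\'on--Zygmund covering argument iterated to a weak Harnack inequality, then oscillation decay on dyadic cylinders --- which is equivalent in scope to Krylov and Safonov's original 1980 argument but organized differently: the original proof works directly from Aleksandrov's maximum principle and a more hands-on measure-theoretic growth lemma without the contact-set geometry, and the sharp parabolic ABP with exponent $n+1$ that you invoke is a later contribution of Tso. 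Two small precisions. First, the reduction to $b^i \equiv 0$, $c \equiv 0$ is not literal: exponential weighting $f \mapsto e^{-\lambda s} f$ replaces $c$ by $c - \lambda$, producing a sign-controlled zeroth-order term rather than eliminating it, and the drift term is typically absorbed not by a change of unknown but by observing that after rescaling to a cylinder of radius $\rho$ the effective drift coefficient $\rho\, b^i$ becomes small, so both lower-order terms are handled perturbatively inside the iteration. Second, as you correctly flag, the $L^3$ norm in the statement coincides with the natural $L^{n+1}$ norm only when $n = 2$; the proposition as written is thus tailored to the paper's two-dimensional setting and would require $L^{n+1}$ in general dimension, a slight imprecision in the paper's statement rather than a gap in your argument. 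With those adjustments, your outline is correct and complete at the level of a proof sketch.
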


To better state our main result regarding parabolic regularity, we recall some notations and estimates. Note that by writing \ref{selfsimpde} in the \(z\) variables, we have, for \(|z| \lesssim 1\),
    \begin{align}
    \label{eq:pde eta}
        \partial_s \eta= L \eta+\mathcal{E},
    \end{align}
    where the second-order operator \(L\) is given by 
    \begin{align}
    \label{def:mathcal P}
        L=e^{-s/2}\Delta-\frac{1}{4}(z \cdot \nabla)+P(z,s).
    \end{align}
    Here, \(P(z,s)\) is defined as 
    \begin{align*}
        P(z,s)=V(z,s)+\sum_{k=2}^{\infty}\binom{-2}{k}\frac{\eta^{k-1}}{\mathcal{P}_{\theta}^{k+2}},
    \end{align*}
    where \(V(z,s)=\frac{1}{3}+\frac{2}{\mathcal{P}_{\theta}^3}\), \(\mathcal{P}_{\theta}\) is defined in \ref{qp} and \(\mathcal{E}(z,s)=E(y,s)\) (\(E(y,s)\) is given in \ref{E}) . Moreover, note that by the pointwise estimates in Lemma \ref{Lemma4.1} (iii), and the pointwise estimate of the generated error \(\mathcal{E}\) in Lemma \ref{Lemma4.2}, we have
    \begin{align}
        &|\eta(z,s)| \lesssim e^{-\frac{5}{12}s}|z|^9, \label{inq app :pointwise eta}\\
        &|\mathcal{E}(z,s)| \lesssim se^{-\frac{1}{2}s}\lesssim e^{-\frac{5}{12}s}|z|^9, \label{inq app :pointwise E}
    \end{align}
    in \(\{2Ke^{-\frac{1}{4}s} \leq |z| \leq 32K\} \times \{\frac{1}{2}s_0 \leq s \leq s_1\}:=A\). The following is our main result.
    \begin{theorem}
    \label{thm:reg main}
        We have the estimates 
        \begin{align*}
            |\partial_i\,\eta(z,s)| \lesssim e^{-\frac{5}{12}s}|z|^8, \quad |\partial^2_{ij}\,\eta(z,s)| \lesssim e^{-\frac{5}{12}s}|z|^7, \quad i,j=1,2
        \end{align*}
        for \((z,s) \in \{4Ke^{-s/4} \leq |z| \leq 16K\} \times \{s_0 \leq s \leq s_1\}:=A'\).
    \end{theorem}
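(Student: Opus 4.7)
The plan is to apply parabolic Schauder (Proposition \ref{prop:Schauder}) locally at each \((z_0, s_0) \in A'\) via a rescaling adapted to the balance between the degenerate diffusion \(e^{-s/2}\Delta_z\) and the linear drift \(-\tfrac{1}{4}z\cdot\nabla\). Writing \(r_0 := |z_0|\), the hypothesis suggests the natural amplitude \(\lambda := e^{-\frac{5}{12}s_0}r_0^9\), and I introduce the rescaled function \(\bar\eta(\bar z, \bar s) := \lambda^{-1}\eta(z_0 + \mu\bar z,\, s_0 + \tau\bar s)\) on the unit parabolic cylinder \(Q_1\), with \((\mu,\tau)\) chosen so that the rescaled PDE is uniformly parabolic with bounded coefficients and \(Q_1\) corresponds to a subset of the hypothesis region \(A\). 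The translated bound \(\|\bar\eta\|_{L^\infty(Q_1)} \lesssim 1\) is immediate from the hypothesis; Proposition \ref{prop:Schauder} will then produce \(\|\nabla_{\bar z}\bar\eta\|_{L^\infty(Q_{1/2})} + \|\nabla^2_{\bar z}\bar\eta\|_{L^\infty(Q_{1/2})} \lesssim 1\), which unwinds to the desired estimates \(|\partial_i\eta| \lesssim \lambda/\mu\) and \(|\partial^2_{ij}\eta| \lesssim \lambda/\mu^2\).

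The delicate point is the choice of \((\mu,\tau)\), which must account for two distinct regimes inside \(A'\). In the \emph{diffusive regime} \(r_0 \lesssim e^{-s_0/4}\), taking \(\mu = r_0/2\) and \(\tau = \mu^2 e^{s_0/2}\) balances the rescaled Laplacian coefficient \(\tau e^{-s_0/2}\mu^{-2} \sim 1\) against the bounded drift \(\tau r_0/\mu \lesssim 1\); the time \(\tau\) remains uniformly bounded so \(Q_1 \subset A\). Unwinding gives \(\lambda/\mu \sim e^{-\frac{5}{12}s_0}r_0^8\) and \(\lambda/\mu^2 \sim e^{-\frac{5}{12}s_0}r_0^7\), precisely the target bounds. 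In the complementary \emph{transport regime} \(e^{-s_0/4} \ll r_0 \leq 16K\), this choice of \(\tau\) would exceed \(s_0/2\), so the direct rescaling fails; instead I pass to characteristic coordinates \(w := z e^{-(s-s_0)/4}\) that eliminate the drift, transforming the equation into the effective heat equation \(\partial_s\tilde\eta = e^{s_0/2-s}\Delta_w\tilde\eta + P\tilde\eta + \mathcal{E}\) with uniformly elliptic coefficient \(a(s) = e^{s_0/2-s} \geq 1\) for \(s \leq s_0\). Applying Proposition \ref{prop:Schauder} on a \((w,s)\)-parabolic cylinder around \((w_0,s_0)=(z_0,s_0)\) of spatial size \(\mu_w \sim r_0/2\) and time \(\tau \sim \mu_w^2\) yields \(|\nabla_w\tilde\eta|, |\nabla_w^2\tilde\eta| \lesssim e^{-\frac{5}{12}s_0}r_0^8\) (respectively \(r_0^7\)); since \(\partial_{z_i}\eta|_{s=s_0} = \partial_{w_i}\tilde\eta|_{s=s_0}\) and \(r_0 \leq 16K\) is bounded on \(A'\), any residual polynomial factor is absorbed into the implied constant, recovering the stated estimates.

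The principal technical obstacle is verifying the Hölder regularity of the source term \(P\eta + NL(\eta) + \mathcal{E}\) on the rescaled cylinder, which Proposition \ref{prop:Schauder} requires. I handle this by a standard bootstrap: first apply Proposition \ref{prop:Krylov-Safonov} on the rescaled problem to upgrade the pointwise bound \(\|\bar\eta\|_{L^\infty(Q_1)} \lesssim 1\) to a quantitative \(C^{0,\delta}(Q_{3/4})\) estimate, after which the potential term \(P\bar\eta\) and the rescaled nonlinearity inherit \(C^{0,\delta}\) regularity from the smoothness of \(\mathcal{P}_\theta\) established in Section \ref{section2}, and the error term \(\mathcal{E}\) is Hölder directly from its explicit structure recorded in Lemma \ref{Lemma4.2}. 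A final check that the parabolic cylinders satisfy \(\mu \leq r_0/2\) and \(\tau \leq s_0/2\) in both regimes ensures they remain inside the hypothesis set \(A\), completing the argument.
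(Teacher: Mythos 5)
Your overall strategy matches the paper's: rescale to a unit parabolic cylinder, upgrade the $L^\infty$ bound to $C^{0,\delta}$ via Krylov--Safonov (to make the nonlinear potential H\"older), then apply Schauder. The bootstrap in your final paragraph mirrors Lemma~\ref{lem:C alpha regularity}. However, your transport-regime argument contains a concrete arithmetic error that is fatal, and since $A'$ requires $|z| \geq 4Ke^{-s/4}$, the transport regime $r_0 \gg e^{-s_0/4}$ is essentially all of $A'$ once $K$ is large, so this is the whole problem, not a side case.

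Passing to characteristic coordinates $w = ze^{-(s-s_0)/4}$ does kill the drift and produces the diffusion coefficient $a(s) = e^{s_0/2-s}$, but your claim ``$a(s) \geq 1$ for $s \leq s_0$'' is false: $e^{s_0/2-s}\geq 1$ holds only for $s\leq s_0/2$. On the parabolic cylinder $\{s_0-\tau \leq s \leq s_0\}$ with $\tau \sim \mu_w^2 \sim r_0^2 \leq (16K)^2$, the time stays within $O(K^2)$ of $s_0$, so $a(s)\sim e^{-s_0/2}$ is exponentially small, not bounded below. The equation therefore remains degenerate parabolic in the characteristic frame; the ellipticity constant $\Lambda$ in Proposition~\ref{prop:Schauder} is $\sim e^{s_0/2}$, and the constant $C'(\delta,\Lambda,n,r)$ in the conclusion of Schauder is not uniform in $s_0$. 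Rescaling time by a further $e^{s_0/2}$ would restore ellipticity, but forces $\tau \sim r_0^2 e^{s_0/2}$, which exceeds the available window (and leaves the set $A$ on which the pointwise inputs \eqref{inq app :pointwise eta}, \eqref{inq app :pointwise E} hold) as soon as $r_0 \gtrsim \sqrt{s_0}\,e^{-s_0/4}$ --- again, essentially all of $A'$. So the change of variables relocates, but does not cure, the degeneracy, and the argument does not close. The paper itself uses a single rescaling $\widetilde\eta(z,s)=\eta(\bar z+\lambda z,\bar s+\lambda^2 s)$, $\lambda=|\bar z|$, with no regime split; your two-regime analysis correctly identifies the diffusion/drift imbalance as the crux, but the proposed fix in the dominant regime does not work.
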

    
    \begin{proof}
        Let \((\bar{z},\bar{s}) \in A'\) and let \(\lambda=|\bar{z}|\). We define the rescaled functions \(\widetilde{\varepsilon}\), \(\widetilde{\mathcal{E}}\) and \(\widetilde{P}\) as
        \begin{align*}
            \widetilde{\eta}(z,s)=\eta(\bar{z}+\lambda z,\bar{s}+\lambda^2s),\quad \widetilde{\mathcal{E}}(z,s)=\mathcal{E}(\bar{z}+\lambda z,\bar{s}+\lambda^2s), \quad \widetilde{P}(z,s)=P(\bar{z}+\lambda z,\bar{s}+\lambda^2s),
        \end{align*}
        then by \ref{inq app :pointwise eta} and \ref{inq app :pointwise E}, we have
        \begin{align*}
            |\widetilde{\eta}(z,s)| \lesssim e^{-\frac{5}{12}(\bar{s}+\lambda^2s)}|\bar{z}+\lambda z|^9, \quad |\widetilde{\mathcal{E}}(z,s)| \lesssim e^{-\frac{5}{12}(\bar{s}+\lambda^2s)}|\bar{z}+\lambda z|^9,
        \end{align*}
        for \((z,s) \in Q_1\). Moreover, \(\widetilde{\eta}\) satisfies the PDE 
        \begin{align*}
            \partial_s \widetilde{\eta}=\widetilde{L}\,\widetilde{\eta}+\lambda^2\widetilde{\mathcal{E}}(z,s),
        \end{align*}
        where the operator \(\widetilde{L}\) is is given by
        \begin{align*}
            \widetilde{L}=e^{-\frac{1}{2}(\bar{s}+\lambda^2s)} \Delta-\frac{1}{4}\lambda(\bar{z}+\lambda z) \cdot \nabla+\lambda^2\widetilde{P}(z,s).
        \end{align*}
        Note that there is \(\delta \in (0,1)\) such that the 
        \(C^{0,\delta}(Q_{1/2})\) norm of the coefficients of the operator \( \widetilde{L}\) are uniformly bounded by for all \((\bar{z},\bar{s})\in A'\) (See Lemma \ref{lem:C alpha regularity}). Hence, for all \((\bar{z},\bar{s}) \in A'\), by parabolic Schauder Estimates \ref{prop:Schauder}, there is a constant \(C'=C'(\delta)\) such that
        \begin{align*}
            \sup_{Q_{1/4}}|\partial_i \widetilde{\eta}|+\sup_{Q_{1/4}}|\partial^2_{ij}\widetilde{\eta}| &\leq C'\left(\|\widetilde{\eta}\|_{L^{\infty}(Q_{1/2})}+\|\widetilde{\mathcal{E}}\|_{L^{\infty}(Q_{1/2})}\right) \lesssim e^{-\frac{5}{12}\bar{s}}|\bar{z}|^9.
        \end{align*}
        Thus, we have 
        \begin{align*}
            |\partial_i\widetilde{\eta}(0,0)|+|\partial^2_{ij}\widetilde{\eta}(0,0)| \lesssim e^{-\frac{5}{12}\bar{s}}|\bar{z}|^9,
        \end{align*}
        for all \((\bar{z},\bar{s}) \in A'\), which implies
        \begin{align*}
            &|\partial_i\,\eta(z,s)| \lesssim e^{-\frac{5}{12}s}|z|^8, \quad |\partial^2_{ij}\,\eta(z,s)| \lesssim e^{-\frac{5}{12}s}|z|^7, \quad i,j=1,2
        \end{align*}
        for \((z,s) \in A'\).
    \end{proof}

    \begin{lemma}
     \label{lem:C alpha regularity}
         There is \(\delta \in (0,1)\), such that the 
        \(C^{0,\delta}(Q_{1/2})\) norm of the coefficients of the operator \(\widetilde{L}\) given in \ref{def:mathcal P} are uniformly bounded for all \((\bar{z},\bar{s}) \in A'\).
     \end{lemma}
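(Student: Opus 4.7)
The plan is to bound the three families of coefficients in $\widetilde{L}_{(\bar z,\bar s)}$ separately, using $\lambda = |\bar z| \in [4Ke^{-\bar s/4}, 16K]$ on $A'$ and $(z,s) \in B(0,1/2)\times [-1/4,0]$ on $Q_{1/2}$. The principal part $a^{ij}(z,s) = e^{-(\bar s+\lambda^2 s)/2}\delta^{ij}$ depends only on $s$ with $|a^{ij}|\le 1$ and $|\partial_s a^{ij}|\le \lambda^2/2\le (16K)^2/2$; the drift $b^i(z,s) = -\frac{\lambda}{4}(\bar z_i + \lambda z_i)$ is affine in $z$ with $|b^i|\le 3\lambda^2/8$ and $|\partial_j b^i|=(\lambda^2/4)\delta^{ij}$. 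Both are thus uniformly $C^{0,1}$. For the zeroth-order coefficient $c = \lambda^2\widetilde{P}$ we decompose
\[
\widetilde{P} = \Bigl(\tfrac{1}{3} + \tfrac{2}{\widetilde{\mathcal{P}}_\theta^3}\Bigr) + \sum_{k\ge 2}\binom{-2}{k}\frac{\widetilde{\eta}^{\,k-1}}{\widetilde{\mathcal{P}}_\theta^{k+2}};
\]
for $(\bar z,\bar s)\in A'$ and $s_0 \gg 1$, the ball $B(\bar z,\lambda)$ sits inside $\{2Ke^{-\bar s/4}\le |z| \le 32K\}$ where the outer cutoff $\mchi_{\kappa(s)/4}$ is identically $1$ and the inner cutoff $\mchi_K$ is $C^\infty$, so a direct calculation from \eqref{qp}--\eqref{corr} shows $\widetilde{\mathcal{P}}_\theta \in C^{0,1}(Q_{1/2})$ with a $K$-dependent uniform bound and $\widetilde{\mathcal{P}}_\theta \ge c_0 > 0$, yielding a uniform $C^{0,1}$ bound on the $\widetilde{V}$ part.

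The main step is to establish uniform $C^{0,\delta}$ regularity of $\widetilde{\eta}$. Applying Krylov--Safonov directly to $\partial_s\widetilde{\eta} = \widetilde{L}_{(\bar z,\bar s)}\widetilde{\eta} + \lambda^2\widetilde{\mathcal{E}}$ is problematic because the principal symbol $e^{-(\bar s+\lambda^2 s)/2}$ degenerates as $\bar s \to \infty$ and uniform ellipticity fails. The cleanest route is to transfer H\"older regularity from the $y$-variable, where the PDE for $\varepsilon(y,s)$ is uniformly parabolic. The bootstrap conditions (i)--(ii) of Definition \ref{Definition 3,1} give weighted $L^2$ control of $\varepsilon$ together with its first and second $y$-derivatives -- on compact $y$-sets via (i) and on unbounded $y$-regions via the $\|\cdot\|_\flat$-norms of $(y\cdot\nabla)^k\varepsilon$ in (ii). By the 2D Sobolev embedding $H^2_{\rm loc}\hookrightarrow C^{0,\delta}_{\rm loc}$ for $\delta \in (0,1)$, one obtains $\|\varepsilon(\cdot,s)\|_{C^{0,\delta}(B(\bar y,R))} \lesssim e^{-5s/12}$ on the $y$-image $B(\bar y,R)$ of $B(\bar z,\lambda)$, with $\bar y = \bar z\,e^{\bar s/4}$ and $R = \lambda\,e^{\bar s/4}$. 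Reversing the scaling,
\[
|\widetilde{\eta}(z,s)-\widetilde{\eta}(z',s)| \le \|\varepsilon\|_{C^{0,\delta}}\bigl(\lambda\,e^{(\bar s+\lambda^2 s)/4}\bigr)^\delta|z-z'|^\delta;
\]
the factor $e^{\delta\bar s/4}$ is absorbed by the $e^{-5\bar s/12}$ decay provided $\delta \le 5/3$, so any fixed $\delta \in (0,1)$ gives a uniform $C^{0,\delta}(Q_{1/2})$ bound on $\widetilde{\eta}$.

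Combining: since $\|\widetilde{\eta}\|_\infty$ is small by \eqref{inq app :pointwise eta} and $\widetilde{\mathcal{P}}_\theta$ is bounded below, the series $\sum\binom{-2}{k}\widetilde{\eta}^{k-1}/\widetilde{\mathcal{P}}_\theta^{k+2}$ converges in $C^{0,\delta}(Q_{1/2})$, so $\widetilde{P} \in C^{0,\delta}$ uniformly, and multiplication by $\lambda^2 \le (16K)^2$ preserves the bound. The main obstacle is the degeneracy of the principal symbol $a^{ij}$, which rules out a direct Krylov--Safonov argument in the $(z,s)$-coordinates of the rescaled equation; the $y$-variable transfer above is the key technical device that circumvents it, with the match between the Hausdorff decay in $y$ and the blowup of the change-of-variables factor being precisely the numerical content behind the choice of Hölder exponent.
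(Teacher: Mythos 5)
Your proposal and the paper's proof share the same overall skeleton—reduce the claim to a $C^{0,\delta}(Q_{1/2})$ bound on the zeroth-order coefficient $\lambda^2\widetilde P$, which in turn reduces to a uniform H\"older bound on $\widetilde\eta$ together with the geometric smallness $|\eta/\mathcal{P}_\theta|\ll 1$ and a lower bound on $\widetilde{\mathcal{P}}_\theta$—but you diverge precisely at the step of producing that H\"older bound. The paper applies Krylov--Safonov (Proposition~\ref{prop:Krylov-Safonov}) directly to the rescaled $z$-equation
\(\partial_s\widetilde\eta=\widetilde L_{(\bar z,\bar s)}\widetilde\eta+\lambda^2\widetilde{\mathcal E}\),
using the $L^\infty$ input from Lemma~\ref{Lemma4.1}(iii) and Lemma~\ref{Lemma4.2}, and gets $[\widetilde\eta]_{C^{0,\delta}(Q_{1/2})}<1$. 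You instead argue that this Krylov--Safonov step is illegitimate because the principal coefficient $e^{-(\bar s+\lambda^2 s)/2}$ vanishes as $\bar s\to\infty$, so the ellipticity constant $\Lambda$ in Proposition~\ref{prop:Krylov-Safonov} is not uniform over $A'$; and you propose to bypass this by deriving H\"older control of $\varepsilon$ in the $y$-variable and transferring it to $z$ via the scaling $y=z e^{s/4}$. The concern you raise about the $z$-side ellipticity is a fair reading of the paper as written: the paper only verifies the upper bound on $a^{ij}$ and does not address the lower one.

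However, your replacement for the Krylov--Safonov step has a gap of its own. You claim that the bootstrap conditions give ``weighted $L^2$ control of $\varepsilon$ together with its first and second $y$-derivatives'' on unbounded $y$-regions, and then invoke $H^2_{\rm loc}\hookrightarrow C^{0,\delta}_{\rm loc}$. But in the region relevant to $A'$ (namely $|y|\gtrsim 4K$, where condition (i) of Definition~\ref{Definition 3,1} is killed by the Gaussian weight), the only bootstrap information is (\ref{int1})--(\ref{bootstrap: int 2}), which control $\|(y\cdot\nabla)^k\varepsilon\|_\flat$ and $\|(z\cdot\nabla)^k\eta\|_\natural$ for $k=0,1,2$. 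In polar coordinates $(y\cdot\nabla)=r\partial_r$ and $(y\cdot\nabla)^2=r^2\partial_r^2+r\partial_r$: these see only \emph{radial} derivatives. The angular derivatives $\tfrac1r\partial_\theta$, $\tfrac1{r^2}\partial_\theta^2$ (equivalently, the full Hessian $\partial^2_{ij}\varepsilon$) are not controlled by the $\flat$-norms, so the two-dimensional Sobolev embedding $H^2\hookrightarrow C^{0,\delta}$ cannot be invoked on $B(\bar y, R)$ for $|\bar y|\gg K$. Your transfer-of-H\"older-exponent calculation ($\delta<5/3$ absorbing the factor $e^{\delta\bar s/4}$) is fine as bookkeeping, but it is applied to a H\"older seminorm that the bootstrap does not actually furnish. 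To make the $y$-side route rigorous one would have to generate the H\"older bound from the \emph{equation} in $y$-variables (e.g.\ Krylov--Safonov applied to the uniformly parabolic $y$-equation after rescaling by $\mu=1/|\bar y|$, which keeps both the diffusion and the drift $O(1)$), rather than from Sobolev embedding of the bootstrap norms. As presented, the proposal replaces one gap (ellipticity on the $z$-side) with another (missing angular derivatives on the $y$-side).
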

     \begin{proof}
         It is sufficient to show for some \(\delta \in (0,1)\), it holds that
         \begin{align*}
             \|\lambda^2\widetilde{P}\|_{C^{0,\delta}(Q_{1/2})} < \infty, \quad \lambda=|\bar z|,
         \end{align*}
         uniformly for all \((\bar{z},\bar{s}) \in A'\), since the \(C^{0,\delta}(Q_{1/2})\) norm of other coefficients of \(\widetilde{L}\) clearly satisfies the bound. We start by establishing H\"older estimates for \(\widetilde{\eta}\). Note that \(\widetilde{\eta}\) satisfies the PDE 
         \begin{align*}
            \partial_s \widetilde{\eta}=\widetilde{L}\,\widetilde{\eta}+\lambda^2\widetilde{\mathcal{E}}(z,s),
        \end{align*}
        and by the estimate \ref{inq app :pointwise eta},
        the definition of the quenching profile \ref{qp}, we conclude that the coefficients of the operator \(\widetilde{L}\) are uniformly bounded for all \((\bar{z},\bar{s}) \in A'\). Thus, by the Krylov-Safonov estimates \ref{prop:Krylov-Safonov}, combined with the estimates \ref{inq app :pointwise eta} and \ref{inq app :pointwise E}, there is \(\delta>0\), and a constant \(C''=C''(\delta)>0\) such that 
        \begin{align*}
            [\widetilde{\eta}]_{C^{0,\delta}(Q_{1/2})} &\leq C''\left(\|\widetilde{\eta}\|_{L^{\infty}(Q_{1})}+\|\widetilde{\mathcal{E}}\|_{L^3(Q_{1})}\right)< 1,
        \end{align*}
        for all \((\bar{z},\bar{s}) \in A'\). Thus, using the above estimate and the fact that
        \(\epsilon>0\) in \ref{out2} is chosen small (so that \(\|\eta(z,s)/\mathcal{P}(z,s)\|_{L^{\infty}(\mathbb{R}^2)} \ll 1\)), we have
        \begin{align*}
            \|\lambda^2\widetilde{P}\|_{C^{0,\delta}(Q_{1/2})} &\leq 256K^2\|\widetilde{P}\|_{C^{0,\delta}(Q_{1/2})}\\
            & \leq 256K^2\left(\|\widetilde{V}\|_{C^{0,\delta}(Q_{1/2})}+\sum_{k=2}^{\infty}(k+2)\left\|\frac{\widetilde{\eta}^{k-1}}{\widetilde{\mathcal{P}_{\theta}}^{k+2}}\right\|_{C^{0,\delta}(Q_{1/2})}\right)< \infty,
        \end{align*}
        uniformly for all \((\bar{z},\bar{s}) \in A'\). 
        This concludes the proof of Lemma \ref{lem:C alpha regularity}.
     \end{proof}

    \begin{remark}
     \label{rmk:control of R}
         By deriving the PDE for \(\partial_i \eta(z,s)\), we can show
         \begin{align*}
             |\partial^3_{ijk}\eta(z,s)| \lesssim e^{-\frac{5}{12}s}|z|^6, \quad i,j,k=1,2,
         \end{align*}
         in \(\{8Ke^{-s/4} \leq |z| \leq 8K\}\).
     \end{remark}
     
     \begin{remark}
     \label{rmk: L2 rho control outer}
         Directly following by Theorem \ref{thm:reg main} and Remark \ref{rmk:control of R}, we have, in the \(y\) variables
         \begin{align*}
             &|\partial_i\varepsilon(y,s)| \lesssim e^{-\frac{32}{12}s}|y|^8,\quad |\partial^2_{ij}\varepsilon(y,s)| \lesssim e^{-\frac{32}{12}s}|y|^7, \quad i,j=1,2,
         \end{align*}
         in \(\{4K \leq |y| \leq 16Ke^{s/4}\}\), and the estimate
         \begin{align*}
             |\partial^3_{ijk}\varepsilon(y,s)| \lesssim e^{-\frac{32}{12}s}|y|^6, \quad i,j,k=1,2,
         \end{align*}
         in \(\{8K \leq |y| \leq 8Ke^{s/4}\}\).
     \end{remark}

     \begin{remark}
     \label{rmk: L2 rho control inner intermd control on boundary}
         By the bound for \(\varepsilon(y,s)\) in Lemma \ref{Lemma4.1} (i), and applying Schauder Estimates to equation \ref{lineq} , we have 
      \begin{align*}
         &\|\partial_i\varepsilon(s)\|_{L^{\infty}(|y| \leq 50K)} \lesssim C(K) e^{-\frac{32}{12}s}, \quad \|\partial^2_{ij}\varepsilon(s)\|_{L^{\infty}(|y| \leq 50K)} \lesssim C(K)e^{-\frac{32}{12}s},
      \end{align*}
      moreover, by deriving the PDE for \(\partial_i \varepsilon(y,s)\), we can also derive
      \begin{align*}
          \|\partial^3_{ijk}\varepsilon(s)\|_{L^{\infty}(|y| \leq 25K)} \lesssim C(K) e^{-\frac{32}{12}s}, \quad i,j,k=1,2.
      \end{align*}
     \end{remark}
     \begin{remark}
     \label{rmk:parabolic z variables}
      Let \(z'=ze^{-s/4}\) and by considering the PDE \ref{eq:pde eta} in the \(z'\) variables, we can conclude the estimates
      \begin{align*}
          |\partial_i \eta(z,s)| \lesssim e^{-\frac{5}{12}s}|z|^{3.005}, \quad |\partial_{ij}\eta(z,s)| \lesssim e^{-\frac{5}{12}s}|z|^{2.005}, \quad i,j=1,2,
      \end{align*}
      for \(4K \leq |z| \leq e^{\frac{s}{4}}\).
      Moreover, by deriving the PDE of \(\partial_i \varepsilon\) and \(\partial_{ij}\eta\), we also have
      \begin{align*}
          |\partial_{ijk}\,\eta(z,s)| \lesssim e^{-\frac{5}{12}s}|z|^{1.005}, \quad |\partial_{ijkl}\,\eta(z,s)| \lesssim e^{-\frac{5}{12}s}|z|^{0.005}, \quad i,j,k,l=1,2,
    \end{align*}
    for \( 8K \leq |z| \leq e^{\frac{s}{4}}\). Note that near \(|z|=e^{s/4}\), we used the parabolic Schauder estimates up to the boundary (In the original variables \(x\)) to conclude the estimate.
    \end{remark}
     
\medskip

\end{document}